\numberwithin{equation}{section}
\theoremstyle{plain}
\newtheorem{theorem}[equation]{\indent\rm T\,h\,e\,o\,r\,e\,m\;}
\newtheorem{proposition}[equation]{\indent\rm P\,r\,o\,p\,o\,s\,i\,t\,i\,o\,n\;}
\theoremstyle{definition}
\newtheorem{definition}[equation]{\indent\rm D\,e\,f\,i\,n\,i\,t\,i\,o\,n\;}
\theoremstyle{remark}
\newtheorem*{remark}{\indent\rm R\,e\,m\,a\,r\,k\;}
\renewenvironment{proof}{\indent\rm P\,r\,o\,o\,f.\;}{\hfill $\square$ \\ \indent}
\renewcommand*{\@seccntformat}[1]{
  \csname the#1\endcsname\;-                                   %
}                                                              %
\renewcommand{\section}{\@startsection{section}{1}{0mm}        %
   {1.5\baselineskip}
   {1\baselineskip}
   {\indent\normalfont\normalsize\bfseries}
   }                                                           %
\renewcommand*{\@seccntformat}[1]{
  \normalfont\bfseries\csname the#1\endcsname\;-               %
}                                                              %
\renewcommand\subsection{\@startsection                        %
  {subsection}{2}{0mm}
  {1.5\baselineskip}
  {1\baselineskip}
  {\indent\normalfont\normalsize\itshape}}
\renewcommand*{\@seccntformat}[1]{
  \normalfont\bfseries\csname the#1\endcsname\;-               %
}                                                              %
\renewcommand\subsubsection{\@startsection                     %
  {subsubsection}{2}{0mm}
  {1.5\baselineskip}
  {1\baselineskip}
  {\indent\normalfont\normalsize\texttt}}
\newsavebox{\mybox}
\newcolumntype{L}[1]{>{\raggedright\let\newline\\\arraybackslash\hspace{0pt}}m{#1}}
\newcolumntype{C}[1]{>{\centering\let\newline\\\arraybackslash\hspace{0pt}}m{#1}}
\newcolumntype{R}[1]{>{\raggedleft\let\newline\\\arraybackslash\hspace{0pt}}m{#1}}
\providecommand*{\boxast}{%
  \mathbin{
    \mathpalette\@boxit{*}%
  }%
}
\newcommand*{\@boxit}[2]{%
  \sbox0{$\m@th#1\Box$}%
  \ifx#1\displaystyle \ht0=\dimexpr\ht0+.05ex\relax \fi
  \ifx#1\textstyle \ht0=\dimexpr\ht0+.05ex\relax \fi
  \ifx#1\scriptstyle \ht0=\dimexpr\ht0+.05ex\relax \fi
  \ifx#1\scriptscriptstyle \ht0=\dimexpr\ht0+.05ex\relax \fi
  \sbox2{$#1\vcenter{}$}
  \rlap{%
    \hbox to \wd0{%
      \hfill
      \raisebox{%
        \dimexpr.25\dimexpr\ht0+\dp0\relax-\ht1\relax
      }{$\m@th#1#2$}%
      \hfill
    }%
  }%
  \Box
}
\renewcommand{\frak}{\mathfrak}
\DeclareMathOperator{\GL}{GL}
\DeclareMathOperator{\PGL}{PGL}
\DeclareMathOperator{\SL}{SL}
\DeclareMathOperator{\Ad}{Ad}
\DeclareMathOperator{\diag}{diag}
\DeclareMathOperator{\tr}{tr}
\renewcommand{\(}{\left(}
\renewcommand{\)}{\right)}
\def\rddots{\displaystyle\cdot^{\displaystyle\cdot^{\displaystyle\cdot}}}
\renewcommand\a{\alpha}
\renewcommand\b{\beta}
\newcommand\g{\gamma}
\renewcommand\d{\delta}
\newcommand\e{\varepsilon}
\renewcommand\l{\lambda}
\newcommand\D{\Delta}
\renewcommand\D{\Delta}
\newcommand\G{\Gamma}
\newcommand\f{\frac}
\newcommand\smallf[2]{{\textstyle{\frac{#1}{#2}}}}
\newcommand{\Z}{{\mathbb{Z}}}
\newcommand{\R}{{\mathbb{R}}}
\newcommand{\C}{{\mathbb{C}}}
\newcommand{\A}{{\mathbb{A}}}
\newcommand{\Q}{{\mathbb{Q}}}
\renewcommand\Re{\text{Re~}}
\renewcommand\i{^{-1}}
\renewcommand\({\left(}
\renewcommand\){\right)}
\newcommand{\ttwo}[4]{
\(\begin{smallmatrix}{#1} & {#2}
\\ {#3} & {#4} \end{smallmatrix}\)}
\newcommand{\sgn}{\operatorname{sgn}}
\newcommand{\gobble}[1]{}
  \newcommand{\rangeref}[2]{%
    \ref{#1}--\afterassignment\gobble\fam 0\ref{#2}%
  }
\def\imod#1{\allowbreak\mkern5mu({\operator@font mod}\,#1)}
\begin{document}
\thispagestyle{empty}

$ $ \vspace {-2.2cm}
\begin{center}
\rule{8.8cm}{0.5pt}
\\[-5pt]
 {\footnotesize Riv.\, Mat.\, Univ.\, Parma,\, Vol. {\bf n} \,(20xx), \,000-000}
\\[-10pt]
\rule{8.8cm}{0.5pt}
\end{center}
\vspace {1.7cm}

\begin{center}
{\sc\large Dorian Goldfeld}, \
{\sc\large Stephen D.~Miller}  \ {\small and}  \
{\sc\large Michael Woodbury}
\end{center}
\vspace {1.1cm}

\centerline{\large{\textbf{A template method for Fourier coefficients  }}}
\centerline{\large{\textbf{  of Langlands Eisenstein series}}}

\renewcommand{\thefootnote}{\fnsymbol{footnote}}

\footnotetext{
Miller was supported by NSF Grant DMS-1801417. }
\footnotetext{
Goldfeld was  supported by Simons Collaboration Grant Number 567168. }

\renewcommand{\thefootnote}{\arabic{footnote}}
\setcounter{footnote}{0}

\vspace{0.6cm}
\begin{center}
\begin{minipage}[t]{11cm}
\small{
\noindent \textbf{Abstract.}
This paper introduces the template method for computing the first coefficient of Langlands Eisenstein series on $\GL(n,\mathbb R)$ and more generally on Chevalley groups over the adele ring of $\mathbb Q.$ In brief,  the first coefficient of Borel Eisenstein series   can be used as a template to compute the first coefficient of more general Eisenstein series by elementary linear algebra calculations.

\medskip

\noindent \textbf{Keywords.}
Eisenstein series, Fourier coefficients, GL(n), Chevalley group.
\medskip

\noindent \textbf{Mathematics~Subject~Classification~(2010):}
11S40, 11F70, 11F30.

}
\end{minipage}
\end{center}

\bigskip

\section{Introduction}\label{sec:intro}

According to the Merriam-Webster dictionary \cite{MerriamWebster2019}, a \emph{template} is ``a gauge, pattern, or mold used as a guide'' for producing other similar things.  In the theory of automorphic forms,   Eisenstein series, in particular those associated to the Borel subgroup, can sometimes serve as a template to deduce properties of other automorphic forms.

  A simple example of using Borel Eisenstein series (to be defined below) as a template is given as follows.  Consider the question of obtaining the precise form of the $\G$-factors in the functional equation of the standard  $L$-function of a globally spherical cusp form for $\GL(n)$ over $\Q$.  Once the $L$-function of spherical Borel Eisenstein series for $\GL(n)$ are shown to be   particular products of shifted Riemann $\zeta$-functions (and hence satisfy a specific functional equation with specific $\Gamma$-factors), it can then be  shown that cusp forms on $\GL(n)$ (with the same spectral parameters) will have the same  $\Gamma$-factors in their functional equation.  This method was employed in \cite{Bump84} for the case of $G=\GL(3)$, and it is described for $\GL(n)$ in \cite{Goldfeld2015} (see, in particular, Section~10.9).




The main aim of this paper is to show that  the Borel Eisenstein series can additionally be used as a template  to easily determine (in many cases) the non-constant Fourier coefficients of  Langlands Eisenstein series. As is well-known, the calculation of the non-constant  coefficients reaches a point where it depends only on local data.

However, until now this local calculation had not been directly performed  in the archimedean case, though it is possible to deduce the calculation using a result of Shahidi \cite{Shahidispaper} (which compares functionals on different automorphic representations) together with some facts from representation theory (specifically the points of reducibility of spherical principal series).  The importance of a direct proof is that anticipated analytic calculations in higher rank will require the same   specificity as has been crucially leveraged in low rank settings, such as fine control of the Whittaker functions.

 We give such a direct proof here in Theorem~\ref{thm:whitnorm}, by introducing  a notion of {\it canonically-normalized} Whittaker functions.  This latter aspect (which is our main new contribution) circumvents the problem that Jacquet's integral representation of Whittaker functions sometimes vanishes identically.  Our normalization does not, and has relatively simple asymptotics in the negative Weyl chamber.  Thus it provides an appropriate basis of special functions without which the Fourier series is somewhat awkward to even define.

Our explicit canonically-normalized Whittaker functions then  explicitly determine the generic Fourier coefficients, including the powers of $\pi$ and $\Gamma$-factors (which is our main contribution; the Euler factors have long been known -- see \cite[Theorem~7.1.2]{Shahidi2010}). Having such an exact formula is required in applications such as \cite{GSW2019}.
  Putting this together with the existing results at nonarchimedean places, we thus see that the local factors for the coefficients are mimicked by the local factors of the Borel series' coefficients.

To illustrate our method and results, we consider the special case of $G=\GL(3)$.  Let $\mathfrak{h}^3:=\GL(3,\R)/(O(3,\R)\cdot \R^\times)$, with the $\R^\times$ factor denoting the center of   $\GL(3,\R)$, i.e., scalar matrices, let $U_3$ be the subgroup of $\GL(3)$ consisting of upper triangular unipotent matrices, and let $T_3(\R)^0\subset \GL(3,\R)$ be the subgroup of diagonal matrices having positive entries on the diagonal.  Every coset representative in $\mathfrak{h}^3$ can be written as $g=xy$ with $x\in U_3(\R)$ and $y\in T_3(\R)^0/\R^\times_{>0}$.  We use the specific parametrization
\begin{equation}\label{eq:gequalsxy-GL3}
 x = \left(\begin{array}{ccc} 1 & x_{12} & x_{13} \\ & 1 & x_{23} \\ & & 1 \end{array}\right), \qquad y = \left( \begin{array}{ccc} y_1 y_2 \\ & y_1 \\ & & 1 \end{array}\right),
\end{equation}
(where $x_{12},x_{13},x_{23}\in\R, \, y_1,y_2>0$) to assign coordinates on $\mathfrak{h}^3$.

Let $\mathcal{D}^3$  denote the  invariant differential operators on $\mathfrak{h}^3$, namely all polynomials (with complex coefficients) in the variables $$\left\{ \frac{\partial}{\partial x_{12}},\quad \frac{\partial}{\partial x_{13}},\quad\frac{\partial}{\partial x_{23}}, \quad \frac{\partial}{\partial y_1},\quad\frac{\partial}{\partial y_2}\right\}$$ which are invariant under all $\GL(3,\R)$ transformations.
An important role is played by the eigenfunctions
\begin{equation}\label{Igalphadef}
I(g, \alpha) := I(xy, \alpha) = y_1^{1-\alpha_3}y_2^{1+\alpha_1},
\end{equation}
parametrized by triples $\alpha=(\alpha_1,\alpha_2,\alpha_3)\in\C^3$ satisfying $\alpha_1+\alpha_2+\alpha_3=0$.\footnote{
A different parametrization is used in some references, such as \cite{Bump84,Goldfeld2015}, where $\alpha$ is parameterized by  $\alpha_1 = 2v_1+v_2, \; \alpha_2 = -v_1+v_2, \; \alpha_3 = -v_1-2v_2$, with $v_1,v_2\in \C$.  Both the nature of our results as well as their general context makes it more convenient to work with a parametrization-free description of $\alpha$.}  The $I_\alpha$ are eigenfunctions of all operators in $\mathcal{D}^3$.  For example, the Laplace eigenvalue of $I(\cdot,\alpha)$ is  $1 - \frac{\alpha_1^2+\alpha_2^2+\alpha_3^2}{2}$  \cite[p.~49 and pp.~185-6]{Terras}.
It is a theorem of Harish-Chandra \cite{MR42422} that to any joint eigenfunction of the full ring of invariant differential operators $\mathcal{D}^3$, there exists such a triple $\alpha=(\alpha_1,\alpha_2,\alpha_3)$ sharing the same eigenvalues under each invariant differential operator in $\mathcal{D}^3$.  The triple $(\alpha_1,\alpha_2,\alpha_3)$ is unique up to permutation of the entries, and is known as the \emph{Langlands parameters} of the eigenfunction.

There are two types of Eisenstein series for $\GL(3)$, corresponding to whether one takes a minimal or maximal parabolic subgroup.  Let
 \[ \mathcal{B} = \left(\begin{array}{ccc} * & * & * \\ 0 & * & * \\ 0 &0  & * \end{array}\right) \qquad\mbox{and}\qquad \mathcal{P}_{2,1}=\left(\begin{array}{ccc} * & * & * \\ * & * & * \\ 0 & 0 & * \end{array}\right)  \]
 denote the Borel and a specific maximal parabolic, respectively.  Any parabolic subgroup of $\GL(3)$ is conjugate to a standard parabolic, (i.e., one containing $\mathcal{B}$), and the only other standard parabolics of $G$ are   $\mathcal P_{1,2}$ (defined analogously to $\mathcal{P}_{2,1}$, but instead with zero entries in the bottom two entries of the first column) and the full group $G$ itself.
The  Borel Eisenstein series for $G=\GL(3,\R)$  is defined for $\Re(\alpha_1-\alpha_2)$ and $\Re(\alpha_2-\alpha_3)>1$ by the absolutely convergent sum
\begin{equation}\label{gl3eisdefinintro}
  E_{\mathcal{B}}(g,\alpha) := \sum_{\gamma\in U_3(\Z)\backslash \SL(3,\Z)} I(\gamma g, \alpha), \qquad (g\in \frak h^3),
\end{equation}
and for general $\a$ by meromorphic continuation.

Eisenstein series for $\mathcal P_{2,1}$ are defined through automorphic forms on $\GL(2)$.
Let $\phi$ be a Maass cusp form for $\SL(2, \Z)$ with spectral parameter $v\in\C$ (i.e., the Laplace eigenvalue of $\phi$ is $\f 14-v^2$ under the usual normalization).  Another type of Langlands Eisenstein series, denoted  $E_{\mathcal{P}_{2,1},\phi}(\cdot, s)$, can be created  for the parabolic subgroup $\mathcal P_{2,1}$.  It is twisted by the Maass form $\phi$ and  defined by summing a function formed from $\phi$, over $\mathcal{P}_{2,1}(\Z)\backslash \SL(3,\Z)$ -- see Definition~\ref{EisensteinSeries}.

Each $M=(m_1,m_2)\in \Z^2$ with $m_1 m_2\neq 0$ determines a nondegenerate character
 \[ \psi_M: U_3(\R)\to \C^\times, \qquad \psi_M\left(x\right) := e^{2\pi i(m_1x_{12}+m_2x_{23})}, \]
where $x_{12}$ and $x_{23}$ are the super  diagonal entries of $x\in U_3(\R)$ as in \eqref{eq:gequalsxy-GL3}.  Using this, one obtains Fourier coefficients
 \[ \int\limits_{U_3(\Z)\backslash U_3(\R)} f(ug) \overline{ \psi_M(u)}\; du \]
for any $\GL(3,\Z)$ automorphic form $f$.

In particular, if $f=E$ is any one of the Eisenstein series defined above, then for $s\in\C$ and $M=(m_1,m_2)$ as above, we have
 \[ \int\limits_{U_3(\Z)\backslash U_3(\R)} E(ug,s) \overline{\psi_M(u)}\; du = \frac{A_E(M,s)}{|m_1m_2|} W_\alpha(Mg), \]
where $W_\alpha$ is a Whittaker function (specifically, the canonically-normalized Whittaker function from Section~\ref{sec:whit}) and $A_E(M,s)$ is termed the $M^{th}$ arithmetic Fourier coefficient of $E$.  The {\it first coefficient}  of $E$ is defined to be  $A_E((1,1),s)$.
Furthermore,  we have
 \[ A_E(M,s) = A_E((1,1),s)\cdot \lambda_E(M,s). \]
where $\lambda_E(M,s)$ is the $M^{th}$ Hecke eigenvalue of $E$ and $\lambda_E((1,1),s)=1$,

The first coefficient  of the Borel Eisenstein series $E_{\mathcal{B}}(g,\alpha)$ only involves the completed Riemann zeta function $\zeta^*(w)$ (for some $w\in\C$ depending on $s$) where
\begin{equation}\label{zetadef}
  \zeta^*(w) := \pi^{-\frac{w}{2}} \Gamma\left(w/2 \right)\zeta(w) = \zeta^*(1-w).
\end{equation}
Let $\phi$ be a Maass form for $\SL(2, \mathbb Z)$ with Laplace eigenvalue $\frac14-v^2.$
Then the first coefficient of $E_{\mathcal{P}_{2,1},\phi}(g,s)$ involves both the completed $L$-function attached to the Maass form  $\phi$ given by
\begin{align*}
L^*(w,\phi) & := \pi^{-w} \Gamma\left( \frac{w+v}{2}  \right)\Gamma\left( \frac{w-v}{2}  \right) L(w, \phi)\\
& = L^*(1-w,\phi),
\end{align*}
and the special value of the completed adjoint $L$-function $L^*(1, \text{Ad}\; \phi)$ (see Proposition \ref{FirstCoeffMaassForm}).

The determination of the constant Fourier coefficients of Eisenstein series was given in great generality for arbitrary reductive groups in Langlands \cite{Langlands1976}.\footnote{In fact,   constant terms can be treated using a template method of their own (following the same strategy we use here, but greatly simplified because there are no complicated special functions involved, unlike for non-constant Fourier coefficients as we face in Section~\ref{sec:whit}).  Instead of (\ref{constterm1}) one uses the calculation found on  \cite[p.~92]{MW1995},  at which point a formula analogous to (\ref{constterm2}) boils down to the local calculations already found in \cite{Langlands1971} for maximal parabolic Eisenstein series.}  The precise determination of all Fourier coefficients of the $\GL(3)$  Borel Eisenstein series $E_{\mathcal{B}}(g,s)$ was obtained independently by Bump \cite{Bump84}, Vinogradov and Takhtadshyan \cite{Vin1978}, and Imai and Terras \cite{IT1982}.
The precise determination of the first coefficients $A_{E_{\mathcal B}}((1,1),s)$ and $A_{E_{\mathcal P_{2,1},\phi}}((1,1),s)$ (up to a non-zero constant factor) is summarized in the following theorem.

\begin{theorem}\label{thm:inintro}  1) Let  $\alpha = (\alpha_1,\alpha_2,\alpha_3)\in \mathbb C^3$ with $\alpha_1+\alpha_2+\alpha_3=0.$
The first coefficient $A_{E_{\mathcal B}}((1,1),\alpha)$  of the $\GL(3)$ Borel Eisenstein series is equal to
\begin{equation}\label{eq:BorelFirstCoeff} \Big( \zeta^*\big(1+\alpha_1-\alpha_2\big) \zeta^*\big(1+\alpha_2-\alpha_3\big) \zeta^*\big(1+\alpha_1-\alpha_3\big) \Big)^{-1}.
\end{equation}

2)
Let $s = \left( s_1,\, -2s_1\right)$ and let $\phi$ be a Maass form for $\SL(2,\Z)$ with Petersson norm one. Then the first coefficient
 $A_{E_{\mathcal P_{2,1},\phi}}((1,1),s)$ of the $\GL(3)$ Eisenstein  series $E_{\mathcal{P}_{2,1},\phi}(g,s)$ induced from $\phi$ (see Definition~\ref{EisensteinSeries}) is equal to
\begin{equation}\label{eq:GL3FirstCoeff}\Big(L^*(1,\Ad{\phi})^{1/2} \cdot  L^*(\phi,1 + 3s_1)\Big)^{-1}
\end{equation}
times an explicit, absolute   non-zero constant.
\end{theorem}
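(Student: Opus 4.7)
The plan for part~(1) is the classical Bruhat decomposition together with Jacquet integral unfolding. I would parametrize $U_3(\Z)\backslash\SL(3,\Z)$ via the Bruhat cells and note that only the big cell contributes to the non-degenerate Fourier coefficient $A_{E_{\mathcal{B}}}((1,1),\alpha)$. Unfolding the sum against $\psi_{(1,1)}$, the resulting expression factors as the product of an archimedean Jacquet integral
\begin{equation*}
\int_{U_3(\R)} I(w_0 u,\alpha)\,\overline{\psi_{(1,1)}(u)}\,du
\end{equation*}
(where $w_0$ is the long Weyl element) and an arithmetic Dirichlet series in the remaining Bruhat parameters. The archimedean integral evaluates by iterated rank-one reductions to an explicit product of $\pi$- and $\Gamma$-factors which, after dividing by the canonically-normalized Whittaker function $W_\alpha(e)$ of Theorem~\ref{thm:whitnorm}, reproduces the archimedean part of $\prod_{i<j}\zeta^*(1+\alpha_i-\alpha_j)^{-1}$; the arithmetic series evaluates by inclusion-exclusion to $\prod_{i<j}\zeta(1+\alpha_i-\alpha_j)^{-1}$, as in Bump \cite{Bump84}. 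Combining the two pieces reproduces \eqref{eq:BorelFirstCoeff}.

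For part~(2), I would apply the template principle. The key archimedean observation is that the principal series induced from the cusp form $\phi$ (appropriately twisted as dictated by $s=(s_1,-2s_1)$) shares the spherical Langlands parameters $(\alpha_1,\alpha_2,\alpha_3)=(s_1+v,\,s_1-v,\,-2s_1)$ with a Borel principal series, and therefore contributes the same archimedean local factor to the first Fourier coefficient via the canonical Whittaker normalization of Theorem~\ref{thm:whitnorm}. Consequently the first coefficient differs from the Borel expression \eqref{eq:BorelFirstCoeff} only in what the arithmetic unfolding produces. Here the two ``off-diagonal'' factors $\zeta^*(1+3s_1\pm v)^{-1}$ combine with the Hecke Dirichlet series $L(1+3s_1,\phi)^{-1}$ arising from unfolding over $\mathcal{P}_{2,1}(\Z)$ to yield $L^*(\phi,1+3s_1)^{-1}$; meanwhile the ``internal'' factor $\zeta^*(1+2v)^{-1}$ is replaced by $L^*(1,\Ad\phi)^{-1/2}$, which appears because the Petersson-one normalization of $\phi$ forces its first Fourier coefficient to equal $L^*(1,\Ad\phi)^{-1/2}$ up to a nonzero constant (by the Rankin--Selberg unfolding for $\|\phi\|^2$).

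The main obstacle is the archimedean computation embodied in Theorem~\ref{thm:whitnorm}, on which both parts rest. For part~(1), the Jacquet integral must be evaluated with all powers of $\pi$, $\Gamma$-values, and normalization conventions fully tracked so that dividing by the canonical $W_\alpha(e)$ produces exactly \eqref{eq:BorelFirstCoeff} with no extraneous constants. For part~(2), the template substitution must be justified even at the spectral parameter $v$ corresponding to a generic cusp form, where Jacquet's integral representation of the Whittaker function may degenerate; this degeneracy is precisely the pathology that motivates the canonically-normalized Whittaker function of Theorem~\ref{thm:whitnorm}. Once these archimedean $\Gamma$-bookkeeping issues are dispatched, the remaining arithmetic unfolding relies on the classical local Euler factors (\cite[Theorem~7.1.2]{Shahidi2010}) and is routine.
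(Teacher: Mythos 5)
Your proposal tracks the paper's argument quite closely. The paper establishes part (1) as a special case of the general Chevalley-group formula in Theorem~\ref{thm:templategeneral}, which is proved by exactly the Bruhat-cell unfolding you describe: only $w=w_{\rm{long}}$ contributes, the unfolded integral recombines into a Jacquet integral for $G$, and the ratio $\mathcal N_v^L(\mu(\pi_v))/\mathcal N_v(\mu_v)$ produces the product of local $\zeta$-factors over $\Delta_U$, while the canonical normalization from Theorem~\ref{thm:whitnorm} supplies the archimedean $\Gamma_\R$-factors. Your part (2) plan mirrors the paper's sketch in Section~\ref{sec:sketch}: match Langlands parameters with a Levi Borel Eisenstein series, observe that the archimedean contribution then matches by canonical normalization, and account for the $L^*(1,\Ad\phi)^{-1/2}$ via Proposition~\ref{PropFirstCoeff} applied to a Petersson-normalized cusp form.

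Two small imprecisions worth flagging. First, in part (1) you speak of the archimedean Jacquet integral ``evaluating to a product of $\pi$- and $\Gamma$-factors'' and then dividing by $W_\alpha(e)$; what actually happens is that the Jacquet integral equals $\mathcal N_\infty(\alpha)^{-1}W^{canon}_{\infty,\alpha}(g)$ as a \emph{function}, so the Fourier coefficient (the scalar in front of the Whittaker function) acquires the factor $\mathcal N_\infty(\alpha)^{-1}=\prod_{j<k}\Gamma_\R(1+\alpha_j-\alpha_k)^{-1}$ directly, without evaluating at $e$; indeed $W^{canon}_{\infty,\alpha}(e)$ need not be $1$ (unlike the $p$-adic case). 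Second, in part (2) the off-diagonal completed $\zeta$-factors do not literally ``combine with'' the Hecke series; rather, the $p$-adic Satake parameters of the induced representation vary with $p$, so the Euler product over $\a\in\D_U$ directly assembles into $L^*(1+3s_1,\phi)^{-1}$. Both are bookkeeping matters and do not affect the correctness of your strategy; the key point — that Theorem~\ref{thm:whitnorm} and the nonvanishing of $W^{canon}$ make the template substitution meaningful even where Jacquet's integral vanishes — is exactly the novel contribution the paper emphasizes.
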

\noindent  A sketch of the proof is given in \S \ref{sec:sketch}  and a more complete proof is given in the greater generality of Chevalley groups in Theorem~\ref{thm:templategeneral} and the computations performed in the first two examples of Section~\ref{sec:examplesoftemplatemethod}.  With this example in mind, we can now explain the template method, in which the main idea is that Fourier coefficients such as (\ref{eq:GL3FirstCoeff}) can be derived from those from minimal parabolics, such as  (\ref{eq:BorelFirstCoeff}).

\subsection*{The Template Method:}\label{TheTemplateMethod}
{\leftskip= 1cm\relax
 \rightskip=1cm\relax
 \noindent
{\bf Step 1:} {\bf Replace a cusp form with a (smaller) Borel Eisenstein series. } Here we form a new Eisenstein series $E_{\mathcal B, \text{new}}$ by following the recipe in Definition~\ref{EisensteinSeries}, but instead of a cusp form $\Phi$ on the Levi of $\mathcal P$ we use a Borel Eisenstein series for that Levi. We choose the parameters of the Levi Eisenstein series to match the Langlands parameters of $\Phi$. The resulting maximal parabolic Eisenstein series on $\GL(n)$ induced from the Levi Eisenstein series is, in fact, equal to a Borel Eisenstein series, with   Langlands parameters given in Definition \ref{MinParEisSeries}.  A similar matching is possible at $p$-adic places using Satake parameters (see Definition~\ref{def:satake}).
\par}

\vskip 8pt
{\leftskip= 1cm\relax
 \rightskip=1cm\relax
 \noindent
 {\bf Step 2:} {\bf We use an analog of (\ref{eq:BorelFirstCoeff}) to compute the first Fourier coefficient of the Borel Eisenstein series $E_{\mathcal B, \text{new}}$, }which involves the product $\prod\limits_{1\le i<j\le n}\zeta^*(1+\a_i-\a_j)\i$ in terms of the Langlands/Satake parameters $(\a_1,\ldots,\a_n)$.
 \par}
 \vskip 8pt
 {\leftskip=1cm\relax
 \rightskip=1cm\relax
 \noindent
 {\bf Step 3:} The Fourier coefficient of the cuspidally-induced Eisenstein series now involves Euler factors matching those of the above expression (in which the Langlands/Satake parameters take different values for different primes in the Euler product), but omitting any pair of indices $(i,j)$  for which the Levi of $\mathcal P$ has a $\GL(n_i)$-factor containing the $(i,j)$-th position.
\par}
\vskip10pt
Of course this sketch does not fully address normalizations, which are handled in Theorem~\ref{thm:templategeneral}.  There are typically two standard ways to normalize automorphic forms, when possible:~either by  their first Fourier coefficient or by their $L^2$ norm.  The exact relationship involves
    the normalizing factor involving $L(1,\Ad{\phi})^{1/2}$ in (\ref{eq:GL3FirstCoeff}), and is crucial for applications (e.g., coming from the trace formula).

\vskip 10pt
 The main goal of this paper is to generalize Theorem~\ref{thm:inintro} and extend the Template Method to arbitrary Chevalley groups, which is done in Theorem~\ref{thm:templategeneral}. With this later result in hand, the Template Method becomes an algorithm to explicitly determine the first coefficient of Langlands Eisenstein series.  Matching  the Langlands parameters in step 1)  involves only simple linear algebra (which can also be expressed in the language of root systems).  In Part I of this paper, the Template Method is first developed in the case of $\GL(n)$ \textcolor{black}{with explicit examples worked out in the table of Fourier coefficients of $\SL(4,\mathbb Z)$ in \S 5 following the classical language in \cite{Goldfeld2015}.} We then show Part II of the paper that the principle applies to and can be proved in the general setting \textcolor{black}{of Chevalley groups.}
\vskip 6pt

We now briefly summarize Part II of this paper. In \S 6 we present an elementary introduction to roots and powers via the example of the Chevalley group $\SL(3).$ This is followed by a general review of Chevalley groups over local fields and adeles. The Langlands Eisenstein series for Chevalley groups are then defined. In \S 7 Whittaker functions on Chevalley groups are introduced and canonical normalizations are obtained via Jacquet integrals in \S 8, with Theorem~\ref{thm:whitnorm} being the main novel mathematical contribution of the paper.

In Theorem \ref{templateglobal} explicit formulae for the Fourier coefficients of cuspidally-induced Eisenstein series on Chevalley groups (in terms of local data consisting of Satake/Langlands parameters)  are obtained, providing justification for the template method. Such formulae had been known at the finite places, but the main contribution of this paper is to obtain the archimedean factors.  That involves the explicit canonical normalization of the spherical archimedean Whittaker functions provided in Theorem~\ref{thm:whitnorm}.  Our definition is the multiple
\begin{equation}\label{Winfcanonintro}
  W_{\infty,\lambda}^{canon}(g)  = \(\prod_{\alpha\in\Delta_+}
 \Gamma_\R\Big( \langle \lambda,\alpha^\vee \rangle+1\Big)\) \operatorname{Jac}_\lambda(g)
\end{equation}
of Jacquet's integral representation of the Whittaker function.  While the latter can vanish identically in $g$ for certain parameters $\l$, the canonical normalization never does; furthermore, it possesses a natural functional equation and straightforward asymptotics (see Section~\ref{sec:whit} for notation and more details).
 Finally, in \S 9 several examples of the template method in the general setting of Chevalley groups are given.

\vskip 10pt

The calculation of the non-constant Fourier coefficients of Eisenstein series is important in many applications, ranging from number theory to string theory.  In many previous applications it has been sufficient to compute coefficients up to undetermined, nonzero scalars.  However, in other applications (e.g., string instantons  and to spectral theory calculations in automorphic forms) the actual value of the constant, which we provide here, is very important (see \cite{blomer}, \cite{GK12}, \cite{FGKP} and \cite{GSW2019}).

\part{Template method for $\GL(n)$}

This first part of the paper states the main results for $\GL(n)$, along with a description of the techniques, main objects, and arguments in classical terminology where appropriate.  Generalizations to general Chevalley groups (and using adeles) are given in Part II.

\section{\bf Basic notation and definitions}
\label{sec:Basicnotation}

  Let $n\ge 2$ be an integer and define
  $\mathfrak h^n := \GL(n,\mathbb R)/\left(O(n,\mathbb R)\cdot\mathbb R^\times   \right)$, with $\R^\times$ denoting $\GL(n,\R)$'s center of scalar matrices.
  Every element of $\mathfrak h^n$ has an upper-triangular coset representative of the form $g=x y$, with
$$x =  \left(\begin{smallmatrix} 1 & x_{1,2}& x_{1,3}& \cdots  & &
x_{1,n}\\
 & 1& x_{2,3} &\cdots & & x_{2,n}\\
& &\hskip 2pt \ddots & & & \vdots\\
& && & 1& x_{n-1,n}\\
& & & & &1\end{smallmatrix}\right), \;\;\;\;\;\;\;\;
y =
\left(\begin{smallmatrix} y_1y_2\cdots
y_{n-1} & & & \\
& \hskip -30pt y_1y_2\cdots y_{n-2} & & \\
& \ddots &  & \\
& & \hskip -5pt y_1 &\\
& & &  1\end{smallmatrix}\right),$$
 $x_{i,j} \in \mathbb R$ ($1\le i < j\le n$), and $y_i > 0$ ($1 \le i \le n-1$).  The group $\GL(n,\R)$ acts as a group of transformations on $\mathfrak h^n$ by left-multiplication.

\begin{definition}{\bf (The space of invariant differential operators)} Let $n \ge 2.$ The ring of invariant differential operators, denoted
 $\mathcal D^n$, consists of all polynomials (with complex coefficients)  in the differential operators
 $\frac{\partial}{\partial x_{i,j}}$  $(1\le i < j\le n)$ and
 $\frac{\partial}{\partial y_i}$ $(1 \le i \le n-1)$ which are invariant under all $\GL(n,\mathbb R)$ transformations.
\end{definition}

\begin{definition} {\bf (Langlands parameters for $\GL(n)$)}\label{def:langlandsparameters}
Let $n\geq 2$.  Langlands parameters are complex numbers $\{\alpha_1,\ldots,\alpha_n\}$ which satisfy $\alpha_1+\cdots+\alpha_n=0$.  By abuse of notation we often refer to the Langlands parameters as a vector $\alpha=(\alpha_1,\ldots,\alpha_n)$ (up to arbitrary permutations of the coordinates).
\end{definition}

\begin{remark}
These parameters are used to classify automorphic representations at the archimedean place.
\end{remark}

\begin{definition} {\bf (The eigenfunction $I(\cdot,\alpha)$)}
Let $n\geq 2$.  Let $\alpha = (\alpha_1,\ldots,\alpha_n)$ denote Langlands parameters and let $\rho=(\rho_1,\ldots,\rho_n)$, where $\rho_i=\frac{n+1}{2}-i$.   We define a power function  on $xy\in \mathfrak h^n$  by
\begin{equation}\label{Ialpha}
I(xy,\alpha) =\prod_{i=1}^n d_i^{\alpha_i+\rho_i}= \prod_{i=1}^{n-1}y_i^{\alpha_1+\cdots+\alpha_{n-i}+\rho_1+\cdots+\rho_{n-i}},
\end{equation}
where $d_i=\prod\limits_{1\le j\le n-i}y_j$ is the $j$-th diagonal entry of the   upper-triangular matrix $g=xy$ as above. The function $I(\cdot,\alpha)$ is an eigenfunction of $\mathcal{D}^n$.
\end{definition}

\begin{definition} {\bf (Langlands parameters of joint eigenfunctions of $\mathcal{D}^n$)}
It is a theorem of Harish-Chandra \cite{MR42422} that to any joint eigenfunction of the full ring of invariant differential operators $\mathcal{D}^n$, there exists Langlands parameters $\alpha=(\alpha_1,\ldots,\alpha_n)$ sharing the same eigenvalues $\l_\d(\a)$ under each invariant differential operator $\d\in \mathcal{D}^n$.  The $n$-tuple $(\alpha_1,\cdots,\alpha_n)$ is unique up to permutation of the entries, and is known as the Langlands parameters of the eigenfunction.
\end{definition}

The above notation in terms of the Langlands parameters has the merit of generalizing nicely to Chevalley groups.  However, several authors have used a different convention with analytic applications in mind, in which it is convenient to write $I(g,\alpha)$ as the following power function $I_s(g)$:

  \begin{definition}\label{def:conversion} {\bf (The eigenfunction $I_s$)}
   Let $n\ge 2$. Define $$s = (s_1, s_2, \ldots,s_{n-1}) \in \mathbb C^{n-1},$$ and also define the power function
${I_s}:U_n(\mathbb R)\backslash \mathfrak h^n\to\mathbb C$ by
\begin{align*}
&\hskip 40pt  I_s(g)  := \prod_{i=1}^{n-1} \prod_{j=1}^{n-1}
y_i^{b_{i,j}s_j}, \qquad\quad \left(g = xy\in\mathfrak h^n\right),
\end{align*}
where
$$b_{i,j} = \begin{cases} \; i \cdot j & \text{if $i + j \le n,$}\\
(n-i)(n-j) & \text{if $i + j \ge n.$}\end{cases}
$$
 \end{definition}

This parametrization is chosen to simplify later formulae.  The genesis of the formula for $b_{i,j}$ is
the Lie-theoretic fact that the inverse of the Cartan matrix of $\GL(n)$ is the $(n-1)\times (n-1)$ matrix whose $(i,j)$-th entry is $\frac{b_{n-i,j}}{n}$.  It is simple to see that with $$s_i=\frac{\alpha_i-\alpha_{i+1}+1}{n},$$ one has $I(g,\alpha)=I_s(g)$.  In the reverse direction, $\alpha=\alpha(s)$ can be recovered by the formula
\begin{equation}\label{eq:alphasformula}
 \alpha_i(s) := \begin{cases} B_{n-1}(s) & \text{if}\; i=1,\\
B_{n-i}(s) - B_{n-i+1}(s) & \text{if}\; 1<i<n,\\
-B_1(s) & \text{if} \; i = n,    \end{cases}
\end{equation}
where $B_j(s) =\sum\limits_{i=1}^{n-1} b_{i,j} (s_i-\frac{1}{n}).$

 For example, in the special case of $\SL(3, \mathbb Z)$  the Langlands parameters $\alpha(s)$ associated to  $s = \left(v_1+\frac{1}{3},v_2+\frac{1}{3}\right)$ are given by
\begin{equation}\label{LanglandsParameters-SL(3,Z)}
\alpha_1(s) = 2v_1+v_2, \quad \alpha_2(s)= -v_1+v_2, \quad \alpha_3(s) = -v_1-2v_2.
\end{equation}
while, in the special case of $\SL(4, \mathbb Z)$, the Langlands parameters $\alpha(s)$ associated to  $s = \left(v_1+\frac{1}{4},v_2+\frac{1}{4}, v_3+\frac{1}{4}\right)$ are given by
\begin{align}\label{LanglandsParameters-SL(4,Z)}
&
\alpha_1(s) = 3v_1+2v_2+v_3, \quad \, \alpha_2(s) = -v_1+2v_2+v_3,
\\
& \alpha_3(s) = -v_1-2v_2+v_3,\quad \alpha_4(s) = -v_1-2v_2-3v_3.
\nonumber\end{align}
Similarly, for $n\ge 2$ one can define spectral parameters  $v_i=s_i-\frac{1}{n}$, for $1\le i \le n-1$.

\begin{definition} {\bf  (Maass cusp forms for $\SL(n,\mathbb Z)$)}\label{def:MaassForm}
Let $n\ge 2.$ A Maass cusp form $\phi: \SL(n,\Z)\backslash \mathfrak{h}^n \to \mathbb C$ for $\SL(n,\mathbb Z)$ is a joint eigenfunction of  $\mathcal D^n$ which satisfies a bound of the form
$$|\phi(xy)|\ll_N (y_1\cdots y_{n-1})^{-N}$$ in the range $y_1,\ldots,y_{n-1}\ge 1$, for any $N>0$.  If the eigenvalues of $\phi$ agree with those of $I(\cdot,\alpha)$ then we term $\alpha$ the Langlands parameters of $\phi$. The Maass form is called a Hecke eigenform if it is a simultaneous eigenfunction of all the Hecke operators.
\end{definition}

 If $\lambda_\Delta$ denotes the Laplace eigenvalue of a Maass cusp form $\phi$  then (like all eigenfunctions),  $\lambda_\Delta$ can be expressed in terms of its Langlands parameters $\alpha = \left(\alpha_1, \ldots, \alpha_n\right) \in \mathbb C^n$:
     $$\lambda_\Delta =  \frac{n^3 - n}{24} -\frac{\alpha_1^2+\alpha_2^2+\cdots+\alpha_n^2}{2}$$
     (see \cite[p.~49 and pp.~185-6]{Terras}).
The generalized Ramanujan-Selberg conjectures assert that all Maass cusp forms for $\SL(n,\Z)$ (and more generally for any congruence subgroup of $\SL(n,\Z)$ are tempered, that is, their Langlands parameters are all purely imaginary.

\begin{definition} {\bf (Parabolic Subgroup)} \label{GLnParabolic} For $n\ge 2$ and $1\le r\le n,$ consider a partition of $n$ given by
$n = n_1+\cdots +n_r$ with positive integers $n_1,\cdots,n_r.$  We define the standard parabolic subgroup $$\mathcal P := \mathcal P_{n_1,n_2, \ldots,n_r} := \left\{\left(\begin{matrix} \GL(n_1) & * & \cdots &*\\
0 & \GL(n_2) & \cdots & *\\
\vdots & \vdots & \ddots & \vdots \\
0 & 0 &\cdots & \GL(n_r)\end{matrix}\right)\right\},$$
where the notation indicates that the $n_i\times n_i$ diagonal entries can be arbitrary elements of $\GL(n_i)$.

Letting $I_r$ denote the $r\times r$ identity matrix, the subgroup
$$N^{\mathcal P} := \left\{\left(\begin{matrix} I_{n_1} & * & \cdots &*\\
0 & I_{n_2} & \cdots & *\\
\vdots & \vdots & \ddots & \vdots \\
0 & 0 &\cdots & I_{n_r}\end{matrix}\right)\right\}$$
is the unipotent radical of $\mathcal P$.  The subgroup
$$M^{\mathcal P} := \left\{\left(\begin{matrix} \GL(n_1) & 0 & \cdots &0\\
0 & \GL(n_2) & \cdots & 0\\
\vdots & \vdots & \ddots & \vdots \\
0 & 0 &\cdots & \GL(n_r)\end{matrix}\right)\right\}$$
(with the same convention that the diagonal entries are invertible $n_i\times n_i$ matrices) is the standard choice of Levi subgroup of $\mathcal P$.
\end{definition}

For example, when $n$ is partitioned into $r=n$ pieces, each of which has $n_i=1$, then  $\mathcal P$ specializes to the Borel subgroup
$$\mathcal B=\mathcal P_{\text Min}=\left\{\left(\begin{smallmatrix}
*&* &\cdots &*\\
& * &\cdots & *\\
&&\ddots &\vdots \\
& & & *\end{smallmatrix}\right) \subset \GL(n,\mathbb R)   \right\},$$ i.e., all upper triangular matrices,  which is the minimal standard parabolic subgroup.  It has $N^{\mathcal B}=U_n$, the group of all upper triangular matrices having ones on the diagonal, and $M^{\mathcal B}=T$, the torus of invertible  diagonal matrices.

By the Iwasawa decomposition, every $g \in \GL(n,\mathbb R)$ can be expressed in the form
$g = u t k$ where $u \in U_n(\mathbb R)$, $t \in T(\mathbb R)$, and $k\in K = O(n,\mathbb R)$, the maximal compact subgroup of $\GL(n,\mathbb R)$.  Then the element $t$ lies in every standard parabolic $\mathcal P$ and may be expressed in the form
\begin{equation} \label{ToricParabolicElement}
t = \left(\begin{matrix} t_1 & 0 & \cdots &0\\
0 & t_2 & \cdots & 0\\
\vdots & \vdots & \ddots & \vdots \\
0 & 0 &\cdots & t_r\end{matrix}\right),
\end{equation}
where each $t_i$ is a diagonal matrix in $\GL(n_i).$  More generally, since $\GL(n,\mathbb R)=\mathcal B(\mathbb R)K$ and since any parabolic $\mathcal P\supset \mathcal B$, one also has $\GL(n,\mathbb R)=\mathcal P(\mathbb R)K$ (without uniqueness of expression).

\vskip 8pt

We conclude this section with a sequence of definitions, culminating in the construction of Eisenstein series.

 \begin{definition}\label{def:maassparabolic} {\bf (Maass form $\Phi$ associated to a parabolic $\mathcal P$)} \label{InducedCuspForm} Let $n\ge 2$. Consider a partition $n = n_1+\cdots +n_r$ with $1 < r < n$. Let  $\mathcal P := \mathcal P_{n_1,n_2, \ldots,n_r} \subset \GL(n,\mathbb R).$
  For $i = 1,2,\ldots, r$, let
$\phi_i:\GL(n_i,\mathbb R)\to\mathbb C$ be either the constant function 1 (if $n_i=1$) or a Maass cusp form for $\SL(n_i,\mathbb Z)$ (if $n_i>1$).  The form $\Phi := \phi_1\otimes \cdots\otimes \phi_r$ is defined on $\GL(n,\mathbb R)=\mathcal P(\mathbb R)K$ by the formula
$$\Phi(n m k ) := \prod_{i=1}^r \phi_i(m_i), \qquad (n\in  N^{\mathcal P}, m\in  M^{\mathcal P},k\in K)$$
where
  $m \in M^{\mathcal P}$ has the form
$m = \left(\begin{smallmatrix} m_1 & 0 & \cdots &0\\
0 & m_2 & \cdots & 0\\
\vdots & \vdots & \ddots & \vdots \\
0 & 0 &\cdots & m_r\end{smallmatrix}\right)$, with    $m_i\in \GL({n_i},\mathbb R).$  In fact, this construction works equally well if some or all of the $\phi_i$ are Eisenstein series.

 \end{definition}

%
%
%

\begin{definition} \label{MinParEisSeries} {\bf (Minimal parabolic Eisenstein series)}  Let $n \ge 2.$  The minimal parabolic (or Borel) Eisenstein series for $\SL(n,\mathbb Z)$ is defined by the formula
$$E_{\mathcal P_{\text{\rm Min}}}(g, \alpha) := \sum_{\gamma\, \in \,\left(\mathcal P_{\text{\rm Min}}\cap \Gamma    \right)\backslash\Gamma} I(\gamma g,\alpha), \qquad (g\in \GL(n,\mathbb R)),$$
initially as an absolutely convergent sum for those Langlands parameters $\alpha=(\alpha_1,\ldots,\alpha_n)$ satisfying $\Re(\alpha_i-\alpha_{i+1})>1$, and then by meromorphic continuation to all $\alpha\in \C^n$ satisfying $\alpha_1+\cdots+\alpha_n=0$.  Note that in terms of the parametrization of $\alpha$ by the variable $s=(s_1,\ldots,s_n)$ given in \eqref{eq:alphasformula},
 \[ E_{\mathcal P_{\text{\rm Min}}}(g, \alpha(s)) = \sum_{\gamma\, \in \,\left(\mathcal P_{\text{\rm Min}}\cap \Gamma    \right)\backslash\Gamma} I_s(\gamma g). \]
\end{definition}

\begin{remark}
For fixed Langlands parameters $\alpha$, the  Eisenstein series $E_{\mathcal{P}_{\mathrm{Min}}}$ is an eigenfunction for all $\mathcal{D}^n$ with the same eigenvalues as $I(\cdot,\alpha)$.  Therefore, $E_{\mathcal{P}_{\mathrm{Min}}}$ has Langlands parameters $\alpha$.
\end{remark}


%
%

 Next, we define the Eisenstein series for $\Gamma$ that are twisted by Maass forms of lower rank, which involve multiplying $\Phi$ from Definition~\ref{def:maassparabolic} by a character:

\begin{definition} {\bf (Character of a parabolic subgroup)} \label{ParabolicNorm}  Let $n\ge 2.$ Fix a partition
$n = n_1+n_2 + \cdots +n_r$ with associated parabolic subgroup $\mathcal P := \mathcal P_{n_1,n_2, \ldots,n_r}.$  Define
\begin{equation}\label{rhoPj}
\rho_{_{\mathcal P}}(j)=\left\{
                            \begin{array}{ll}
                              \f{n-n_1}{2}, & j=1 \\
                              \f{n-n_j}{2}-n_1-\cdots-n_{j-1}, & j\ge 2.
                            \end{array}
                          \right.
\end{equation}
 Let $s = (s_1,s_2, \ldots, s_r) \in\mathbb C^r$ satisfy
$\sum\limits_{i=1}^r n_i s_i = 0.$
Consider the function
$$\boxed{\; | \cdot   |_{_{\mathcal P}}^s := I(\cdot,\alpha) }$$
 on $\GL(n,\mathbb R)$, where
\begin{multline*}
\alpha=(\overbrace{s_1-\rho_{_{\mathcal P}}(1)+\smallf{1-n_1}{2}, \; s_1-\rho_{_{\mathcal P}}(1)+\smallf{3-n_1}{2}, \; \ldots \; ,s_1-\rho_{_{\mathcal P}}(1)+\smallf{n_1-1}{2}}^{n_1 \;\,\text{\rm terms}},\\
\overbrace{s_2-\rho_{_{\mathcal P}}(2)+\smallf{1-n_2}{2}, \; s_2-\rho_{_{\mathcal P}}(2)+\smallf{3-n_2}{2}, \; \ldots \; ,s_2-\rho_{_{\mathcal P}}(2)+\smallf{n_2-1}{2}}^{n_2 \;\,\text{\rm terms}},\\
\vdots
\\
\ldots \;\; ,\overbrace{s_r-\rho_{_{\mathcal P}}(r)+\smallf{1-n_r}{2}, \; s_r-\rho_{_{\mathcal P}}(r)+\smallf{3-n_r}{2}, \; \ldots \; ,s_r-\rho_{_{\mathcal P}}(r)+\smallf{n_r-1}{2}}^{n_r \;\,\text{\rm terms}}).
\end{multline*}
The conditions $\sum\limits_{i=1}^r n_i s_i = 0$ and $\sum\limits_{i=1}^r n_i \rho_{_{\mathcal P}}(i) = 0$
 guarantee that  $\alpha$'s entries sum to zero.  When $g\in \mathcal P$, with diagonal block entries $m_i\in \GL(n_i,\mathbb R)$, one has
 \vskip-10pt
 $$| g  |_{_{\mathcal P}}^s=\prod_{i=1}^r\left| \text{\rm det}(m_i)\right|^{s_i},$$
  \vskip-10pt \noindent
   so that $| \cdot   |_{_{\mathcal P}}^s$ restricts to a character of $\mathcal P$ which is trivial on $N^{\mathcal P}$.
 \end{definition}

For example, when $n=3$, $n_1=2$, and $n_2=1$, $\rho_{_{\mathcal P}}(1)=\f 12$ and $\rho_{_{\mathcal P}}(2)=-1$.  The parametrization of the $s_i$ here is chosen to match that of Definitions 10.4.5 and 10.5.3 in \cite{Goldfeld2015}.

 \begin{definition} {\bf (Langlands Eisenstein series twisted by Maass forms of lower rank) } \label{EisensteinSeries}
Let $\Gamma = \SL(n, \mathbb Z)$ with $n \ge 2.$  Consider a parabolic subgroup $\mathcal P$ of $\GL(n,\mathbb R)$ and functions $\Phi$  and $| \cdot |_{_{\mathcal P}}^s$   as given in Definitions \ref{InducedCuspForm} and  \ref{ParabolicNorm}, respectively.
The Langlands Eisenstein series determined by this data is defined by
\begin{equation}\label{def:EPhi}
 E_{\mathcal P, \Phi}(g,s) := \sum_{\gamma\,\in\, (\mathcal P \cap \Gamma)\backslash \Gamma}  \Phi(\gamma g)\cdot |\gamma g|^s_{_{\mathcal P}}
 \end{equation}
as an absolutely convergent sum for $\Re(s_i)$ sufficiently large, and extends to  $s\in \C^r$ by meromorphic continuation.
\end{definition}

If the Langlands parameters of $\phi_i$ are written as $(\alpha_{i,1},\ldots,\alpha_{i,n_i})$,
the Langlands parameters of $E_{\mathcal P, \Phi}(g,s)$ are

\begin{multline}\label{langlandsparamsforPhi}
 (\overbrace{\alpha_{1,1}+s_1-\rho_{_{\mathcal P}}(1), \qquad\ldots \qquad,\alpha_{1,n_1}+s_1-\rho_{_{\mathcal P}}(1)}^{n_1 \;\,\text{\rm terms}},\\
 \hskip40pt
\overbrace{\alpha_{2,1}+s_2-\rho_{_{\mathcal P}}(2), \qquad\ldots \qquad,\alpha_{2,n_2}+s_2-\rho_{_{\mathcal P}}(2)}^{n_2 \;\,\text{\rm terms}},\\
\vdots\\
\ldots, \quad
\overbrace{\alpha_{r,1}+s_r-\rho_{_{\mathcal P}}(r), \qquad\ldots \qquad,\alpha_{r,n_r}+s_r-\rho_{_{\mathcal P}}(r)}^{n_r \;\,\text{\rm terms}}).
\end{multline}

\section{\bf Fourier expansions of Maass forms for $\SL(n, \mathbb Z)$}

\begin{definition} {\bf (Character of the unipotent group $U_n$)}
   Let $n \ge 2.$ For  $L = (\ell_1, \ell_2, \ldots, \ell_{n-1}) \in \Bbb Z^{n-1},$ the function $\psi_L:U_n(\mathbb R)\rightarrow\C$ defined by
   $$\boxed{ \psi_{L}(u) := e^{2\pi i\big(\ell_1u_{1,2} + \; \cdots \; + \ell_{n-1}u_{n-1,n}\big)}, } \qquad u =  \left(\begin{smallmatrix} 1 & u_{1,2} & u_{1,3} & \cdots  & u_{1,n}\\
   & 1 & u_{2,3} & \cdots  & u_{2,n}\\
   & & \ddots& \ddots  &\vdots\\
   &&&1&u_{n-1,n}\\
   & & & & 1\end{smallmatrix}\right), $$
   is a multiplicative character.  It is {\em nondegenerate} when none of the integers $\ell_1,\ldots,\ell_{n-1}$ are zero.
   \end{definition}

In terms of the above coordinates, a bi-invariant Haar measure $du$ on $U_n(\mathbb R)$ is defined by $du := \prod\limits_{1\le i<j\le n}du_{i,j}$.

\begin{definition}\label{def:jacquetwhit} {\bf (Canonically Normalized  Whittaker Function)}
$\phantom{x}$
\vskip 3pt
Assume $n\ge 2$ and let $w_n = \left(\begin{smallmatrix} &&1\\
&\rddots&\\ 1&&
  \end{smallmatrix}   \right)$.
The function
   $$W^{\pm}_{\alpha}(g) := \prod_{1\leq j< k \leq n} \frac{\Gamma\left(\frac{1 + \alpha_j - \alpha_k}{2}\right)}{\pi^{\frac{1+\alpha_j - \alpha_k}{2}} }\cdot \int\limits_{U_n(\mathbb R)} I(w_n ug, \alpha) \,\overline{\psi_{1, \ldots, 1,\pm 1}(u)} \, du, $$
defined as an absolutely-convergent integral for $\Re(\alpha_i-\alpha_{i+1})>0$ (with $1\le i<n),$  extends to a holomorphic function on $\{\alpha \in \mathbb C^n|\alpha_1+\cdots+ \alpha_n=0\}$.
\end{definition}
This is an example of the canonically normalized Whittaker functions we define in Theorem~\ref{thm:whitnorm}.  The integral is Jacquet's integral representation  of Whittaker functions (see \cite{JacquetThesis} and \cite{Goldfeld2015}).  The product of Gamma factors  is included so that   ${W^{\pm}_{\alpha}}$ is invariant under all permutations of  $\alpha_1,\alpha_2, \ldots, \alpha_n.$  Moreover, even though Jacquet's integral  often vanishes identically as a function of $\a$, this normalization never does.
\vskip 16pt
\noindent
\begin{remark} If $g$ is a diagonal matrix in $\GL(n,\mathbb R)$ then the value of $W^{\pm}_{\alpha}(g)$ is independent of sign, so we drop the $\pm$. We also sometimes drop the $\pm$ when the sign is $+$.
\end{remark}

Jacquet's Whittaker function  is characterized (up to scalar multiples depending on $\alpha$) by the following properties:
 \begin{align} \nonumber & \bullet \; \delta W^{\pm}_{\alpha} (g) = \lambda_{\delta}(\alpha)\cdot W^{\pm}_{\alpha}(g), \qquad \big(\text{for all $\delta\in \mathcal D^n$, \, $g \in \GL(n,\R)$}\big),\\
 &\bullet \; W^{\pm}_{\alpha}(ugzk) = \psi(u) W^{\pm}_{\alpha}(g), \qquad \big(\text{for all} \; u\in U_n(\R), \; g\in \GL(n, \R),\nonumber\\
 &
 \hskip 200pt
 z\in \mathbb R^\star,\;k\in O_n(\mathbb R)\big), \nonumber\\
 &\bullet \;  (y_1\cdots y_{n-1})^N W^{\pm}_{\alpha}(\mathrm{diag}\big( y)\big)=\mathcal O(1), \quad  \big(\text{for $N>0$ and $y_i\ge 1, (1\le i<n)$}\big),  \nonumber\\
 &\bullet \; {W^{\pm}_{\alpha}}\;\text{is entire in $\alpha$,}\nonumber
 \end{align}
Furthermore, one has the functional equation
$${W^{\pm}_{\alpha}}  =  {W^{\pm}_{\alpha'}},$$  where $\alpha'$ is any permutation of $\alpha = (\alpha_1, \ldots, \alpha_n)$.

\noindent

\begin{theorem} {\bf (Fourier expansion of Maass forms)} \label{FourierExpansion}
Let $n\ge 2$ and let
$
\phi: \mathfrak h^n\to\mathbb C
$
be a Maass cusp form for $\SL(n, \mathbb Z)$
with Langlands parameters $\alpha = (\alpha_1,  \ldots,\alpha_n)\in \mathbb C^n$ (see Definition~\ref{def:MaassForm}).
 Then  for $g\in \GL(n,\mathbb R)$  \begin{align*}
\phi(g) = & \sum_{\gamma\in U_{n-1}(\mathbb Z)\backslash \SL_{n-1}(\mathbb Z)} \;\sum_{m_1=1}^\infty \;\; \cdots\;  \sum_{m_{n-2}=1}^\infty\;\sum_{m_{n-1}\ne0} \\
& \hskip 70pt
 \frac{A_\phi(M)}
{\prod\limits_{k=1}^{n-1}  |m_k|^{\frac{k(n-k)}{2}}} W^{\text{\rm sgn}(m_{n-1})}_{\alpha}\left(M^* \left(\begin{matrix} \gamma &0\\0&1\end{matrix}\right)g\right), \end{align*}
where $M^*= \left(\begin{smallmatrix} |m_1m_2\cdots m_{n-1}| & & &&\\
&\ddots & &&\\
& & |m_1m_2| &&\\
&&& |m_1|&\\
&&&&1\end{smallmatrix}\right)$.
\end{theorem}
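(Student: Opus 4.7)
The strategy is the classical Piatetski-Shapiro/Shalika derivation, which I would split into two coupled parts.  First, prove by induction on $n$ the Shalika formula
$$
\phi(g) \;=\; \sum_{\gamma\in U_{n-1}(\Z)\backslash\SL(n-1,\Z)} W_\phi\!\left(\left(\begin{smallmatrix}\gamma&0\\0&1\end{smallmatrix}\right)g\right),
$$
where $W_\phi(g):=\int_{U_n(\Z)\backslash U_n(\R)}\phi(ug)\,\overline{\psi_{(1,\ldots,1)}(u)}\,du$.  Second, expand the ``standard'' Whittaker coefficient $W_\phi$ itself as a Mellin-type series in integer parameters $M=(m_1,\ldots,m_{n-1})$ against the canonically-normalized functions $W^\pm_\alpha(M^*\cdot)$, using uniqueness of the Whittaker model.

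For the Shalika formula I would Fourier expand $\phi$ along the abelian subgroup $N_n=\{I_n+\sum_{i=1}^{n-1}u_iE_{i,n}\}$ (the last column of $U_n$), whose integer points lie in $\SL(n,\Z)$ and preserve $\phi$.  The zero-frequency term is the constant term along the maximal parabolic $\mathcal P_{n-1,1}$, which vanishes by cuspidality (Definition~\ref{def:MaassForm}).  For a nonzero frequency $m\in\Z^{n-1}\setminus\{0\}$, write $m=m_{n-1}\cdot(0,\ldots,0,1)\,\gamma$, with $m_{n-1}=\gcd(m_1,\ldots,m_{n-1})$ (carrying sign) and $\gamma$ running over a coset space of primitive vectors.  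Left $\SL(n-1,\Z)$-invariance of $\phi$, via the embedding $\gamma\mapsto\left(\begin{smallmatrix}\gamma&0\\0&1\end{smallmatrix}\right)$, then reorganizes the sum into an outer $\gamma$-sum and a single inner scalar $m_{n-1}\ne 0$.  Restricting the resulting Fourier coefficient to the $\GL(n-1)$-block (after stripping the $m_{n-1}$-character) yields a cuspidal function on $\SL(n-1,\Z)$ of Maass type, to which the inductive hypothesis applies; iterating the argument, the intermediate primitive-vector coset spaces telescope into the single outer quotient $U_{n-1}(\Z)\backslash\SL(n-1,\Z)$ appearing in the theorem.

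For the Mellin expansion of $W_\phi$, I would observe that for each character $\psi_L$ with $L=(\ell_1,\ldots,\ell_{n-1})$ and all $\ell_i\ne 0$, the Fourier coefficient $\phi_L(g):=\int_{U_n(\Z)\backslash U_n(\R)}\phi(ug)\overline{\psi_L(u)}\,du$ inherits the Whittaker transformation law $\phi_L(ug)=\psi_L(u)\phi_L(g)$, is a joint $\mathcal D^n$-eigenfunction with the eigenvalues of $I(\cdot,\alpha)$, and has rapid decay down the positive Weyl chamber.  Uniqueness of the Whittaker model then forces $\phi_L(g)=c_L\cdot W^{\mathrm{sgn}(\ell_{n-1})}_\alpha(M^*g)$, where $M^*$ is the diagonal matrix that conjugates $\psi_L$ to the standard character $\psi_{(1,\ldots,1,\pm 1)}$ inside Jacquet's integral, and $c_L$ is a scalar.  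Setting $A_\phi(M):=c_L\cdot\prod_k|m_k|^{k(n-k)/2}$ under the correspondence $\ell_i\leftrightarrow m_{n-i}$ produces the stated expansion.

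The main technical obstacle is tracking the exponent $k(n-k)/2$.  It arises from combining the Jacobian of the conjugation $u\mapsto M^{*-1}uM^*$ on $U_n(\R)$ with the transformation of the power function $I(\cdot,\alpha)$ under the associated diagonal left-multiplication; verifying that the $\alpha$-dependent contributions cancel cleanly — so that only the universal exponent $k(n-k)/2$ (independent of $\alpha$) survives — requires the identities $\alpha_1+\cdots+\alpha_n=0$ and $\rho_i=(n+1-2i)/2$.  Justifying termwise rearrangement of the iterated Fourier series is routine given the rapid decay of $\phi$, so the real delicate point is the simultaneous coordination of the inductive orbit decomposition, the Mellin expansion, and the Whittaker normalization.
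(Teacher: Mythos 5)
The paper itself gives no proof of this theorem; it cites the original papers of Piatetski-Shapiro and Shalika and the exposition in \cite[\S 9.1]{Goldfeld2015}. What you sketch in your second and third paragraphs — Fourier-expand along the last column of $U_n$, discard the constant term by cuspidality, decompose the nonzero frequencies into $\SL(n-1,\Z)$-orbits, iterate, and identify each nondegenerate Fourier coefficient $\phi_L$ with a scalar times $W^\pm_\alpha(M^*\cdot)$ by uniqueness of Whittaker models — is indeed the classical proof from those references. But the two-part framing in your opening paragraph has a genuine gap.

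The ``Shalika formula'' as you state it is false: for $n=2$ the $\gamma$-sum is trivial and it would assert $\phi=W_\phi$, i.e., that a Maass cusp form equals its own $(1)$-Fourier coefficient. The correct Piatetski-Shapiro/Shalika expansion already has the $M$-sum built in; it arises jointly with the $\gamma$-sum from the iterated orbit decomposition you describe in your second paragraph, not from a subsequent ``Mellin expansion'' of $W_\phi$. Indeed the proposed second step cannot work as stated: $W_\phi$ is one Whittaker function for the single fixed nondegenerate character $\psi_{(1,\ldots,1)}$, and by the very uniqueness you invoke it is a single scalar multiple of $W^+_\alpha$, not a series. For $M\neq(1,\ldots,1)$ the function $g\mapsto W^\pm_\alpha(M^*g)$ transforms on the left under $U_n(\R)$ by the \emph{different} character $u\mapsto\psi_{(1,\ldots,1,\pm 1)}(M^*uM^{*-1})$, so no nontrivial superposition of them can reproduce the $\psi_{(1,\ldots,1)}$-transformation law of $W_\phi$. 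Relatedly, your flagged ``main technical obstacle'' about the exponent $k(n-k)/2$ is not one: that factor is a normalization convention absorbed into the definition of $A_\phi(M)$ (so that $A_\phi$ matches Hecke eigenvalues, cf.\ Proposition~\ref{LanglandsParPMin}) and does not require independent verification. The real content is precisely the orbit decomposition and the Whittaker-model uniqueness, which your middle paragraphs describe correctly; you should simply fold the $M$-sum into your statement of the Shalika expansion rather than trying to extract it from $W_\phi$ afterwards.
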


Here $A_\phi(M)\in\mathbb C$ is called the $M^{th}$ Fourier coefficient of $\phi$. This theorem was first proved independently by  Piatetski-Shapiro \cite{PS1975} and Shalika \cite{Shalika1973,Shalika1974}. See also the proof in \cite[\S 9.1]{Goldfeld2015}.

\begin{definition}{\bf (First  coefficient of  a Maass form)} For $n\ge 2$,  consider a non-constant Maass cusp form $\phi$ for $\SL(n,\mathbb Z)$ with Fourier  expansion as in Theorem \ref{FourierExpansion}. Assume $\phi$ is a Hecke eigenform.   Let $A_\phi(1) := A_\phi(1,1,\ldots,1)$ denote the first Fourier coefficient of $\phi.$ Then we have
$$A_\phi(M) = A_\phi(1) \cdot\lambda_\phi(M).$$
Here $\lambda_\phi(M)$ is the Hecke eigenvalue (see \S 9 in \cite{Goldfeld2015}), and $\lambda_\phi(1) = 1$.
\end{definition}

Recall also the definition of the adjoint $L$-function:  $L(s, \text{\rm Ad}\;\phi) := L(s, \phi\times\overline{\phi})/\zeta(s)$ where $L(s, \phi\times\overline{\phi})$ is the Rankin-Selberg convolution $L$-function (see, for example, \cite[\S12.1]{Goldfeld2015}).

\begin{proposition} \label{PropFirstCoeff}\label{FirstCoeffMaassForm} Assume $n\ge 2.$ Let $\phi$ be a Maass cusp form for $\SL(n,\mathbb Z)$ with Langlands parameters $\alpha=(\alpha_1,\ldots,\alpha_n)$. Then the first coefficient $A_\phi(1)$ has magnitude given by
$$|A_\phi(1)|^2 =  \frac{c_n\cdot\big\langle \phi, \,\phi\big\rangle}{L(1, \text{\rm Ad} \; \phi) \prod\limits_{1\le j\ne k\le n}\Gamma\left(\frac{1+\alpha_j-\alpha_k}{2}  \right)} = \frac{c_n\cdot\big\langle \phi, \,\phi\big\rangle}{L^*(1, \text{\rm Ad} \; \phi)},$$
where $\langle \cdot,\cdot \rangle$ denotes the $L^2$-inner product over $\SL(n,\Z)\backslash \mathfrak h^n$ against some fixed choice of Haar measure and $c_n \ne 0$ is a constant depending only on $n$ and this Haar measure. Here $L^*(1, \text{\rm Ad} \; \phi)$ is the completed adjoint $L$-function (including Gamma factors).
\end{proposition}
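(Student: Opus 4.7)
My approach is the classical Rankin--Selberg unfolding, with the two unknowns $|A_\phi(1)|^2$ and $\langle \phi,\phi\rangle$ being related through an auxiliary meromorphic family whose residue at $s=1$ is accessible from both sides. Specifically, I would consider the pairing
\begin{equation*}
I(s) \;:=\; \int\limits_{Z\SL(n,\Z)\backslash\GL(n,\R)} |\phi(g)|^2 \, E_{\mathcal P_{n-1,1}}(g,s)\,dg,
\end{equation*}
where $E_{\mathcal P_{n-1,1}}(\cdot ,s)$ is the maximal parabolic ``mirabolic'' Eisenstein series whose constant term has a simple pole at the edge $s=1$ with residue the constant function (times an explicit nonzero constant $c_n'$ depending only on $n$ and the Haar normalization). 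Taking the residue of both sides at $s=1$ gives
\begin{equation*}
\Res_{s=1} I(s) \;=\; c_n' \cdot \langle \phi,\phi\rangle,
\end{equation*}
which introduces the Petersson norm on the right-hand side of the desired identity.

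Next I would compute $I(s)$ by the standard unfolding: using the Fourier expansion of one copy of $\phi$ from Theorem~\ref{FourierExpansion}, the $\SL(n,\Z)$-sum defining the Eisenstein series collapses, and the remaining integral decouples into an arithmetic part and an archimedean part. The arithmetic part is the Dirichlet series
\begin{equation*}
\sum_{M} \frac{|A_\phi(M)|^2}{N(M)^{s}} \;=\; |A_\phi(1)|^2 \sum_M \frac{|\lambda_\phi(M)|^2}{N(M)^s}
\end{equation*}
(for the appropriate norm $N$), which by the standard identity $L(s,\phi\times\overline\phi) = \zeta(s)\,L(s,\mathrm{Ad}\,\phi)$ has a simple pole at $s=1$ with residue proportional to $L(1,\mathrm{Ad}\,\phi)$ after one strips off the finite Euler factors absorbed into $E_{\mathcal P_{n-1,1}}$. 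The archimedean part is an integral over the torus of $W_\alpha(y)\,\overline{W_\alpha(y)}$ against the appropriate power function coming from $E_{\mathcal P_{n-1,1}}$; by the Bump--Stade (or Jacquet--Shalika local) formula this integral evaluates at $s=1$ to
\begin{equation*}
\prod_{1\le j\neq k\le n} \Gamma\!\left(\tfrac{1+\alpha_j-\alpha_k}{2}\right)^{-1}
\end{equation*}
up to an explicit constant and powers of $\pi$. This is precisely where the \emph{canonical} normalization of Definition~\ref{def:jacquetwhit} is essential: the Gamma factors appearing in $W_\alpha$'s normalization (ranging over $j<k$) combine with those produced by the unfolded archimedean Rankin--Selberg integral (symmetric in $\alpha$) to yield the symmetric product over $j\neq k$.

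Matching residues on both sides gives
\begin{equation*}
c_n'\cdot\langle\phi,\phi\rangle \;=\; \Res_{s=1}I(s) \;=\; |A_\phi(1)|^2\cdot L(1,\mathrm{Ad}\,\phi) \cdot \prod_{1\le j\neq k\le n}\Gamma\!\left(\tfrac{1+\alpha_j-\alpha_k}{2}\right) \cdot c_n''
\end{equation*}
for explicit nonzero constants $c_n',c_n''$ depending only on $n$, and recognizing the product of local Gamma factors together with $L(1,\mathrm{Ad}\,\phi)$ as $L^*(1,\mathrm{Ad}\,\phi)$ (absorbing the remaining powers of $\pi$ into $c_n$) yields the claim.

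The main obstacle is the archimedean integral: one must show that the Mellin--Whittaker inner product against the appropriate power function, with the canonical normalization factored in, produces exactly the product of Gamma functions $\prod_{j\neq k}\Gamma((1+\alpha_j-\alpha_k)/2)$ to the required power with no residual symmetry-breaking factors. This is where the symmetry of $W_\alpha$ under permutations of $\alpha$ (noted after Definition~\ref{def:jacquetwhit}) plays a crucial role in converting the asymmetric $j<k$ product in the normalization into the symmetric $j\neq k$ product that appears in $L^*(1,\mathrm{Ad}\,\phi)$; any mismatch in the powers of $\pi$ or in constants is absorbed into the universal constant $c_n$.
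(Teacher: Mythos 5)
The paper does not prove Proposition~\ref{PropFirstCoeff}; it simply cites \cite{GSW2019} for the argument. Your Rankin--Selberg unfolding against a mirabolic Eisenstein series, with the residue at $s=1$ picking up $\langle\phi,\phi\rangle$ on one side and $L(1,\mathrm{Ad}\,\phi)$ times an archimedean Whittaker integral (Bump--Stade) on the other, is indeed the standard route and is what one should expect to find in the reference, so the overall strategy is sound.

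However, your sketch contains an internal inconsistency that would have to be repaired in any written-out version. You state that the archimedean integral $\int W_\alpha(y)\overline{W_\alpha(y)}(\cdots)\,dy$ evaluates at $s=1$ to $\prod_{j\neq k}\Gamma\big(\tfrac{1+\alpha_j-\alpha_k}{2}\big)^{-1}$, but two sentences later your matching equation reads $c_n'\langle\phi,\phi\rangle = |A_\phi(1)|^2\cdot L(1,\mathrm{Ad}\,\phi)\cdot\prod_{j\neq k}\Gamma\big(\tfrac{1+\alpha_j-\alpha_k}{2}\big)\cdot c_n''$ — with the Gamma product to the $+1$ power. Only the latter gives the claimed formula. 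The exponent in the archimedean claim is wrong: with the canonically normalized $W_\alpha$ of Definition~\ref{def:jacquetwhit} (which has $\prod_{j<k}\Gamma\big(\tfrac{1+\alpha_j-\alpha_k}{2}\big)$ in the \emph{numerator}), the product $W_\alpha\overline{W_\alpha}$ already contributes $\prod_{j\neq k}\Gamma(\cdot)$, while the Stade local integral applied to the bare Jacquet integrals $\operatorname{Jac}_\alpha\overline{\operatorname{Jac}_\alpha}$ is, at $s=1$, independent of $\alpha$ up to constants (the normalization Gammas in the Jacquet integral's denominator cancel Stade's numerator exactly there). One can check this already for $n=2$: $W_\nu(y)=2\sqrt{y}\,K_\nu(2\pi y)$, and $\int_0^\infty K_\nu(2\pi y)^2 y^{2s-1}\,dy/y$ at $s=1$ is proportional to $\Gamma(\tfrac12+\nu)\Gamma(\tfrac12-\nu)$, i.e., to $\prod_{j\neq k}\Gamma(\tfrac{1+\alpha_j-\alpha_k}{2})$, not its reciprocal. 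So the archimedean contribution should be stated as the $+1$ power; your final display is then correct, but as written the two displays contradict one another.

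A second, smaller point: you should be explicit that the cancellation producing the symmetric $\prod_{j\neq k}$ from the two $j<k$ normalization factors uses $\overline{\alpha_j-\alpha_k}=\alpha_k-\alpha_j$, i.e., requires either temperedness or the more general fact that $\{\overline{\alpha_j}\}=\{-\alpha_j\}$ as a multiset for a unitary Maass form. Without noting this, the step ``$\overline{W_\alpha}$ contributes the $j>k$ factors'' is unjustified.
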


See \cite{GSW2019} for a proof of Proposition~\ref{PropFirstCoeff}.

\section{\bf The $m^{\rm{th}}$ Hecke eigenvalue of Langlands Eisenstein series}

\begin{proposition} {\bf (The $M^{\rm{th}}$ Fourier coefficient of $E_{\mathcal P_{\text{\rm Min}}}$)} \label{LanglandsParPMin} Consider  $E_{\mathcal P_{\text{\rm Min}}}(\cdot,\alpha)$ with associated Langlands parameters $\alpha$ as defined in Definition~\ref{MinParEisSeries}. Let $M = (m_1,m_2,\ldots, m_{n-1}) \in \mathbb Z_{+}^{n-1}$ and $M^*\hskip-3pt=\hskip-2pt\text{\rm diag}(m_1\cdots m_{n-1}, \ldots, m_1, 1)$. Then the $M^{th}$ term in the Fourier-Whittaker expansion of $E_{\mathcal P_{\text{\rm Min}}}$ is
\begin{align*}
\int\limits_{U_n(\mathbb Z)\backslash U_n(\mathbb R)} E_{\mathcal P_{\text{\rm Min}}}(ug, \alpha)\, \overline{\psi_M(u) }\; du \; = \; \frac{A_{\mathcal P_{\text{\rm Min}}}(M,\alpha)}
{\prod\limits_{k=1}^{n-1}  m_k^{k(n-k)/2}} \; W_\alpha\big(M^* g\big),
\end{align*}
where $A_{\mathcal P_{\text{\rm Min}}}(M,\alpha) = A_{\mathcal P_{\text{\rm Min}}}\big((1,\ldots,1),\alpha\big) \cdot \lambda_{\mathcal P_{\text{\rm Min}}}(M,\alpha),$
and
\begin{equation}\label{HeckeEigenvaluePMin}
\lambda_{\mathcal P_{\text{\rm Min}}}\big((m,1,\ldots, 1), \alpha\big) = \underset{c_1c_2\cdots c_n=m} {\sum_{c_1,\ldots,c_n\in\mathbb Z_{+}}} c_1^{\alpha_1} c_2^{\alpha_2}\cdots  c_n^{\alpha_n}, \qquad (m\in\mathbb Z_{+})
\end{equation}
  is the $(m,1,\ldots,1)^{th}$ (or more informally, simply  the $m^{th}$) Hecke eigenvalue of $E_{\mathcal P_{\text{\rm Min}}}$. Furthermore
\begin{equation} \label{FirstCoeffPMin}
A_{\mathcal P_{\text{\rm Min}}}\big((1,\ldots,1), \alpha\big) = \, c_0 \prod_{1\leq j< k \leq n} \zeta^*\big(1 + \alpha_j - \alpha_k \big)^{-1}
\end{equation}
for some constant $c_0 \ne 0$ (depending only on $n$),   and  $\zeta^*(w)$ is the completed Riemann $\zeta$-function from (\ref{zetadef}).
\end{proposition}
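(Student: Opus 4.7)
The plan is to proceed via the classical Bruhat decomposition and then perform an explicit local computation. First, I would unfold the defining sum for $E_{\mathcal{P}_{\text{Min}}}$ by writing $\Gamma = \SL(n,\mathbb{Z})$ as a disjoint union of $(\mathcal{P}_{\text{Min}}\cap\Gamma)$-cosets indexed by Bruhat cells, parametrized by Weyl group elements $w$. Substituting into the Fourier coefficient integral and interchanging sum and integral (justified by absolute convergence for $\Re(\alpha_i-\alpha_{i+1})$ sufficiently large), the contribution from each Weyl element $w$ can be analyzed: whenever $w \ne w_n$, there exists a root subgroup contained in $w^{-1} U_n w \cap U_n$ on which the nondegenerate character $\psi_M$ is nontrivial, forcing the corresponding piece of the integral to vanish. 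After this Weyl-element analysis only the long Weyl element $w_n$ survives, yielding the single integral $\int_{U_n(\mathbb{R})} I(w_n u g, \alpha)\,\overline{\psi_M(u)}\,du$.

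Next, I would perform the change of variables $u \mapsto M^* u (M^*)^{-1}$ in this integral, using the left-equivariance of $I(\cdot,\alpha)$ under $\mathcal{P}_{\text{Min}}$ and tracking the Jacobian of the Haar measure, which produces precisely the factor $\prod_{k=1}^{n-1} m_k^{-k(n-k)/2}$. The character $\psi_M$ transforms into $\psi_{(1,\ldots,1,\sgn(m_{n-1}))}$ after absorbing signs. Comparing with Definition~\ref{def:jacquetwhit} identifies this expression as a constant multiple of $W_\alpha^{\sgn(m_{n-1})}(M^* g)$, and the multiplicative factorization $A_{\mathcal{P}_{\text{Min}}}(M,\alpha) = A_{\mathcal{P}_{\text{Min}}}((1,\ldots,1),\alpha)\cdot \lambda_{\mathcal{P}_{\text{Min}}}(M,\alpha)$ is then read off by isolating the $M$-dependent part from the $M$-independent one.

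To establish (\ref{FirstCoeffPMin}), I would evaluate the remaining Jacquet-type integral for $M=(1,\ldots,1)$ using the adelic/local reinterpretation. At each nonarchimedean prime $p$, the Gindikin--Karpelevich formula yields $\prod_{j<k}\frac{1-p^{-(1+\alpha_j-\alpha_k)}}{1-p^{\alpha_j-\alpha_k}}$, whose product over all primes assembles into $\prod_{j<k}\zeta(1+\alpha_j-\alpha_k)^{-1}$ up to an absolute constant. At the archimedean place, the Gamma-function prefactors built into Definition~\ref{def:jacquetwhit} are designed exactly so that the archimedean Jacquet integral, combined with them, converts each Riemann zeta factor into the completed $\zeta^*$. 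Finally, formula (\ref{HeckeEigenvaluePMin}) follows from the well-known fact that $E_{\mathcal{P}_{\text{Min}}}$ is a spherical principal series representation with local Satake parameters $(p^{\alpha_1},\ldots,p^{\alpha_n})$ at each prime $p$, so that the $(m,1,\ldots,1)$-th Hecke eigenvalue is the complete symmetric polynomial $h_m$ evaluated at these parameters, which is precisely the claimed sum.

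The main obstacle will be the careful bookkeeping of two steps. The first is verifying rigorously that every non-long Weyl element drops out of the nondegenerate Fourier coefficient; this requires explicitly identifying, for each $w \ne w_n$, a simple root whose corresponding unipotent subgroup is stabilized by $w$ but on which $\psi_M$ acts nontrivially, producing an inner integral that vanishes. The second and more delicate obstacle is the archimedean normalization: the Gamma factors in Definition~\ref{def:jacquetwhit} must combine with the archimedean Jacquet integral (and with the $\pi$ powers generated in the Mellin--Barnes reduction, which recursively collapses through repeated use of the beta integral) to yield exactly $\zeta^*(1+\alpha_j-\alpha_k)^{-1}$ for every pair $j<k$, with no extraneous archimedean factors surviving. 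This is the technical heart of the proof and is the $\GL(n)$ precursor of the Whittaker-normalization argument in Theorem~\ref{thm:whitnorm}.
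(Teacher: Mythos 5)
Your strategy matches the paper's approach, which is carried out in detail in Theorem~\ref{thm:templategeneral} of Part~II (the paper's ``proof'' of Proposition~\ref{LanglandsParPMin} is a citation to that result for~(\ref{FirstCoeffPMin}) and to \cite{Goldfeld2015} for~(\ref{HeckeEigenvaluePMin})). The Bruhat unfolding, the vanishing of all Weyl cells except the long element for a nondegenerate character, the change of variables producing the $\prod m_k^{-k(n-k)/2}$ factor, and the identification of the completed zeta factors from local Whittaker computations are exactly the paper's steps, just rendered in classical $\GL(n)$ language rather than adelically over Chevalley groups.

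One correction of terminology and of the intermediate expression: for the first coefficient you need the value of the local Jacquet (Whittaker) integral at the identity, which at a finite prime $p$ is given by the Casselman--Shalika formula, not Gindikin--Karpelevich. The latter computes the $p$-adic intertwining factor appearing in the constant term and gives the ratio $c_p(\alpha_j-\alpha_k)=\frac{1-p^{-(1+\alpha_j-\alpha_k)}}{1-p^{-(\alpha_j-\alpha_k)}}$; the Euler product of that ratio over $p$ produces a quotient of two zetas, not the single factor $\zeta(1+\alpha_j-\alpha_k)^{-1}$ you then assert. What you actually want, and what the Casselman--Shalika formula directly delivers, is
\[
\operatorname{Jac}_{p,\lambda}(e)=\prod_{1\le j<k\le n}\bigl(1-p^{-(1+\alpha_j-\alpha_k)}\bigr),
\]
whose product over all $p$ is $\prod_{j<k}\zeta(1+\alpha_j-\alpha_k)^{-1}$; combining this with the archimedean $\Gamma_\R$-prefactors built into the normalization yields the completed $\zeta^*$ product of~(\ref{FirstCoeffPMin}). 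With that substitution the rest of your outline, including the Satake/complete-symmetric-polynomial argument for the Hecke eigenvalue~(\ref{HeckeEigenvaluePMin}), is sound.
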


\begin{proof}
The proof of (\ref{HeckeEigenvaluePMin}) follows as in the proof of Theorems 10.8.1 and 10.8.6 in \cite{Goldfeld2015}.
Equation (\ref{FirstCoeffPMin}) is a special case of Theorem~\ref{thm:templategeneral}.
\end{proof}

\vskip 5pt
The  $m^{th}$ Hecke eigenvalue of Langlands Eisenstein series twisted by Maass forms of lower rank was computed in Proposition~10.9.3 of \cite{Goldfeld2015} which we now rewrite (in the notation of Definition~\ref{EisensteinSeries}) for the convenience of the reader. Note that what we write here is a corrected version which can be found in the errata to \cite{Goldfeld2015}.  Note also that we keep the above notation using $(\alpha_1,\ldots,\alpha_n)$ for parameterizing minimal parabolic Eisenstein series, but use the $s$-variables from Definition~\ref{EisensteinSeries} for other parabolics.

\begin{proposition} {\bf (The $m^{\rm{th}}$ Hecke eigenvalue of $E_{\mathcal P_{n_1,\ldots,n_r}}$)} \label{Prop10.9.3} Let $n \ge 2$ and $n = n_1+\cdots n_r$ be a partition of $n$ with $1 < r < n.$  Let $E_{\mathcal P, \Phi}(g,s)$, with $\mathcal P = \mathcal P_{n_1,\ldots,n_r}$,  $\Phi = (\phi_1, \ldots,\phi_r)$, be a Langlands Eisenstein series for $\SL(n, \mathbb Z)$ where  $s = (s_1, \ldots,s_r) \in \mathbb C^r$ with $n_1s_1+\cdots+n_rs_r = 0.$  Let $T_m$ denote the Hecke operator on $\SL(n,\mathbb Z)$  for $m = 1, 2, 3, \ldots$ Then $T_{m}  E_{\mathcal P, \Phi} \; = \;\lambda_{E_{P,\Phi}}\big((m,1,\ldots,1),s\big) \cdot  E_{\mathcal P, \Phi}$ where
\begin{align*}
    \lambda_{E_{P,\Phi}}\big((m,1,\ldots,1),s\big) & = \hskip-10pt\underset {c_1c_2\cdots c_r = m } {\sum_{    1 \le c_1, c_2, \ldots, c_r \, \in \, \mathbb Z}} \hskip-10pt \lambda_{\phi_1}(c_1)  \cdots \lambda_{\phi_r}(c_r)
    \cdot c_1^{s_1-\rho_{_{\mathcal P}}(1)}  \cdots c_r^{s_r-\rho_{_{\mathcal P}}(r)}
\end{align*}
     (cf.~(\ref{rhoPj})).
      In this formula $\lambda_{\phi_i}(c_i)$ denotes the eigenvalue of the $\SL(n_i, \mathbb Z)$  Hecke operator $T_{c_i}$ acting on $\phi_i$ which may also be viewed as  the $(c_i,1,1,\ldots,1)^\text{th}$ Fourier coefficient of $\phi_i.$
\end{proposition}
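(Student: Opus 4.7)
The plan is to unfold the Hecke operator using upper-triangular coset representatives and then exploit the block structure of $\mathcal P = \mathcal P_{n_1,\ldots,n_r}$ to reduce the calculation to Hecke actions on each Levi factor $\GL(n_i)$.

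First, I would write $T_m$ as a (suitably normalized) sum $\sum_\delta$ over integer upper-triangular matrices $\delta$ of determinant $m$, whose positive diagonal entries $c_1,\ldots,c_n$ satisfy $c_1\cdots c_n = m$ and whose off-diagonal $(i,j)$-entries are chosen modulo $c_j$. Substituting into Definition~\ref{EisensteinSeries} and interchanging sums, the problem reduces to evaluating $\sum_\delta \Phi(\delta g)\,|\delta g|^s_{_{\mathcal P}}$ and then resumming over $\gamma\in(\mathcal P\cap\Gamma)\backslash\Gamma$.

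Next, I would decompose each such $\delta$ uniquely as $\delta = n\cdot D$, where $n\in N^{\mathcal P}(\Z)$ has identity diagonal blocks and $D = \mathrm{diag}(\delta_1,\ldots,\delta_r)$ is block-diagonal with $\delta_i$ an $n_i\times n_i$ upper-triangular integer matrix of determinant $C_i := \prod_{j \in I_i} c_j$, where $I_i$ indexes the $i$-th block. Because $\Phi$ is left $N^{\mathcal P}$-invariant and $|\cdot|^s_{_{\mathcal P}}$ is trivial on $N^{\mathcal P}$, the $n$-part is absorbed into the outer coset sum via $N^{\mathcal P}(\Z)\subset\mathcal P\cap\Gamma$. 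What is left is a sum over block-diagonal $D$, which factorizes as the product $\prod_i T_{C_i}^{(n_i)}$ of Hecke operators acting on $\phi_1\otimes\cdots\otimes\phi_r$, yielding $\prod_i \lambda_{\phi_i}(C_i)$; and the character produces $|D|^s_{_{\mathcal P}} = \prod_i C_i^{s_i}$.

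The remaining $-\rho_{_{\mathcal P}}(i)$ shift is a bookkeeping matter: the count of $N^{\mathcal P}(\Z)$-residue classes absorbed in the previous step, together with the overall Hecke normalization, combines to yield exactly the exponents from \eqref{rhoPj}. Summing over all ordered factorizations $C_1\cdots C_r = m$ and relabeling $c_i := C_i$ then produces the stated formula. The main obstacle is this final normalization step; it amounts to identifying the modular character of $\mathcal P$ in the count of absorbed unipotent representatives, essentially the calculation carried out in \cite[Prop.~10.9.3]{Goldfeld2015} (with the corrections listed in its errata), and is equivalent to the statement that parabolic induction commutes with the Satake transform at each unramified prime.
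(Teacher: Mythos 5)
The paper does not give a proof of this proposition; it simply restates (in corrected form, per the errata) Proposition~10.9.3 of \cite{Goldfeld2015}, so there is no in-paper argument to compare your sketch against. That said, your sketch has the right overall shape but contains a concrete gap in step~2. You cannot in general factor an upper-triangular Hecke representative $\delta$ as $\delta=n\cdot D$ with $n\in N^{\mathcal P}(\Z)$ \emph{integral} and $D$ block-diagonal: solving $n=\delta D^{-1}$ forces denominators from $D^{-1}$ into the off-block entries of $n$. For instance with $\mathcal P=\mathcal P_{2,1}\subset\GL_3$ and
$\delta=\left(\begin{smallmatrix}1&0&1\\0&1&0\\0&0&2\end{smallmatrix}\right)$, the block-diagonal part is $D=\mathrm{diag}(1,1,2)$ and $\delta D^{-1}$ has a $1/2$ in the $(1,3)$-entry; the other order $D^{-1}\delta$ is equally problematic for other $\delta$. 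So ``absorbing the $n$-part into the outer coset sum'' as stated does not go through.

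The correct fix is not to factor the individual Hermite-normal-form $\delta$'s, but to re-choose a system of representatives directly for the coset space $(\mathcal P\cap\Gamma)\backslash\{A\in M_n(\Z)\colon\det A=m\}$ adapted to the block structure of $\mathcal P$; one then needs to count carefully how many $\Gamma\backslash$-Hecke representatives map to each such $\mathcal P$-adapted representative, and it is exactly this count (together with the global $m^{-(n-1)/2}$ normalization) that produces the $\rho_{_{\mathcal P}}(i)$-shifts. You acknowledge this in your final paragraph but describe it as ``essentially the calculation carried out in \cite[Prop.~10.9.3]{Goldfeld2015}'' --- which is to say you are ultimately deferring to the same source the paper cites. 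Either carry out the $\mathcal P$-adapted coset count explicitly (alternatively, argue locally: at each prime $p$ the representation underlying $E_{\mathcal P,\Phi}$ is an unramified principal series induced from $\mathcal P(\Q_p)$, whose Satake parameters are the disjoint union of the Satake parameters of the $\phi_i$ shifted by $s_i-\rho_{_{\mathcal P}}(i)$, and the $T_{p^k}$ eigenvalue is the appropriate elementary symmetric/complete homogeneous polynomial in those), or cite Goldfeld outright; but as written the middle of the argument asserts an integral factorization that does not exist.
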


\subsection*{\bf The $\SL(4,\Z)$ Langlands Eisenstein series}

There are 4 standard, proper, non-associate (i.e., with non-conjugate Levi components $M^\mathcal P$) parabolic subgroups $\mathcal P$ of $\SL_4(\mathbb R)$, corresponding to the partitions
\begin{align} \label{GL(4,R)partitions}
4 \; = \; 3+1 \; =  \; 2+2 \;   = \; 2+1+1 \;  = \; 1+1+1+1.
\end{align}
We now explicitly compute the Fourier coefficients of the Langlands Eisenstein series for
$\SL(4, \mathbb Z).$  Let $E_{\mathcal P, \Phi}(\cdot,s)$  have Langlands parameters $\alpha = (\alpha_1,\alpha_2,\alpha_3,\alpha_4)$ which depend on $s$ and the Langlands parameters of $\Phi$ as in \eqref{langlandsparamsforPhi}. Now $E_{\mathcal P, \Phi}$  will have a Fourier-Whittaker expansion similar to  the one in Theorem~\ref{FourierExpansion}.  In particular, if $M = (m_1,m_2,m_3) \in \mathbb Z^3$ with $m_1,m_2,m_3\ge 1,$  then the $M^{th}$ Fourier-Whittaker coefficient of $E_{\mathcal P, \Phi}(\cdot, s)$ is given by
\begin{align*}
\int\limits_{U_4(\mathbb Z)\backslash U_4(\mathbb R)} E_{\mathcal P}(uy, s)\, \overline{\psi_M(u) }\; du \; = \; \frac{A_{E_{\mathcal P, \Phi}}(M, s)}
{m_1^{\frac32} m_2^2 m_3^{\frac32}} \; W_\alpha\big(M^* y\big).
\end{align*}
Here $$A_{E_{\mathcal P, \Phi}}(M, s) = A_{E_{\mathcal P, \Phi}}\big((1,1,1),s\big) \cdot \lambda_{E_{\mathcal P, \Phi}}(M, s)$$
and  $\lambda_{E_{\mathcal P, \Phi}}(M)$
  is the $M^{th}$ Hecke eigenvalue of $E_{\mathcal P, \Phi}$.

The above definition of Eisenstein series uses the parameters $s=(s_1,\ldots,s_r)$ from \cite{Goldfeld2015}.  However, many of our formulas simplify if one writes
\begin{equation}\label{sizi}
  s_i=z_i+\rho_{_{\mathcal P}}(i), \qquad (z_i\in\C).
\end{equation}
Note that $\sum\limits_{i=1}^r z_i=0$ since $\sum\limits_{i=1}^r s_i=0.$

\vskip 6pt
 It follows from Proposition \ref{Prop10.9.3} that the $(m,1,\ldots,1)^{\text{th}}$ Hecke eigenvalue of the  Langlands Eisenstein series associated to the 4 standard partitions are (respectively) the four divisor sums
 \begin{align*}
 & \sum\limits_{c_1c_2c_3c_4=m} \hskip-15pt c_1^{\alpha_1} c_2^{\alpha_2} c_3^{\alpha_3} c_4^{\alpha_4},\qquad \hskip 52pt
\sum\limits_{c_1c_2c_3 =m} \lambda_\phi(c_1) c_1^{z_1} c_2^{z_2}c_3^{-2z_1-z_2},\\
&
\sum\limits_{c_1c_2=m}\hskip-5pt \lambda_{\phi_1}(c_1)  \lambda_{\phi_2}(c_2) \left(\frac{c_1}{c_2}\right)^{z_1}, \qquad \sum\limits_{c_1c_2=m}\hskip-5pt \lambda_{\phi_1}(c_1)  \lambda_{\phi_2}(c_2) \left(\frac{c_1}{c_2^3}\right)^{z_1},
\end{align*}
which appear in the Table of Fourier Coefficients of $\SL(4,\mathbb Z)$ Langlands Eisenstein Series found at the end of Part I.

\section{\bf Sketch of the template method for $\SL(3,\Z)$ and $\SL(4,\Z)$ Langlands Eisenstein series}\label{sec:sketch}

The template method can be used to easily and quickly compute the first Fourier coefficient of Langlands Eisenstein series for $\GL(n)$ up to a constant factor.  This is a special case of the more precise relationship (that gives the correct constant factor) given later in Theorem~\ref{thm:templategeneral} for general Chevalley groups.
 For minimal parabolic Eisenstein series the Fourier coefficient is described by \eqref{FirstCoeffPMin} in Proposition~\ref{LanglandsParPMin}.  The Langlands Eisenstein series twisted by minimal parabolic Eisenstein series of lower rank are in fact also minimal parabolic Eisenstein series, so their first coefficient can be easily found  as above, and (up to a constant factor) the first coefficient will be a product of completed $\zeta$-functions associated to the minimal parabolic Eisenstein series of lower rank.  According to the template method, one can then simply replace the completed $\zeta$-functions associated to the minimal parabolic Eisenstein series of lower rank by completed $L$-functions associated to Maass forms of lower rank. There is one other technical issue that comes up, however:~normalization.

We shall require the following standard notation for completed $L$-functions. Recall the completed Riemann $\zeta$-function from (\ref{zetadef}).
For a Maass form $\phi$ on $\GL(2)$  with Langlands parameters $(v,-v)$ (i.e., spectral parameter $1/2+v$), define the completed $L$-function $L^*(w, \phi)$  associated to $\phi$  (for $w\in\mathbb C$)
by
$$L^*(w, \phi) :=  \pi^{-w}\Gamma\left( \frac{w+v}{2}  \right)  \Gamma\left( \frac{w-v}{2}  \right) L(w,\phi) = L^*(1-w,\phi).$$
If $\phi_1, \phi_2$ are two Maass forms on $\GL(2)$ with Langlands parameters $(v,-v)$ and $(v',-v')$, respectively, then the completed $L$-function for the Rankin-Selberg convolution $L(w, \phi_1\times\phi_2)$ is given by
\begin{align} \label{RankinSelbergL}
L^*(w, \phi_1\times\phi_2) & =   \pi^{-2w}\Gamma\left( \frac{w+v+v'}{2}  \right)  \Gamma\left( \frac{w-v+v'}{2}  \right)\\
&
\hskip 30pt
\cdot
\Gamma\left( \frac{w+v-v'}{2}  \right)  \Gamma\left( \frac{w-v-v'}{2}  \right) L(w,\phi_1 \times \phi_2).
\nonumber
\end{align}
Finally, for a Maass form $\phi$ on $\GL(3)$ with Langlands parameters $(\alpha_1,\alpha_2,\alpha_3)=(v+2v',v-v',-2v-v')$ (i.e., spectral parameters $(\frac13 +v, \frac13 +v')$) define the completed $L$-function
$L^*(w, \phi)$  associated to $\phi$ by
\begin{align*}
L^*(w, \phi) :=  \pi^{-\frac{3w}{2}}G_{v,v'}(w) L(w,\phi) = L^*(1-w,\phi),
\end{align*}
where
$$
\aligned
G_{v,v'}(w) & = \prod_{j=1}^3\Gamma\left(\frac{w+\alpha_j}2\right)\\
&  = \Gamma\left( \frac{w+v+2v'}{2}  \right)  \Gamma\left( \frac{w+v-v'}{2}  \right) \Gamma\left( \frac{w-2v-v'}{2}  \right).
\endaligned$$
Recall also that the adjoint $L$-function of a Maass form $\phi$ on $\GL(n)$ over $\Q$ is defined by $L(w, \text{\rm Ad} \; \phi) := L(w,\phi\times \overline{\phi})/\zeta(w).$

\subsection*{\bf Computing the first coefficient of  $E_{\mathcal P_{2,1}, \Phi}$:}
We begin by explicitly working out an example of the template method for $\SL(3,\mathbb Z)$ which sketches a proof of part 2) of Theorem \ref{thm:inintro}. Consider the Eisenstein series $E_{P_{2,1},\Phi}$ where $\Phi$ is a Maass form for $\SL(2, \mathbb Z)$ with Langlands parameters ($v, -v)$, Laplace eigenvalue $\frac14-v^2,$ and spectral parameter $\frac12+v.$
\vskip 5pt
{\bf Step 1:} We replace $\Phi$ with the Borel Eisenstein series $  E_{P_{\rm Min}}\Big(\left(\begin{smallmatrix}  y_2 &\\
& 1\end{smallmatrix}\right), \,v \Big) $ for $\SL(2,\mathbb Z)$, which has  the same spectral parameter as $\Phi$.
We then define an Eisenstein series of ${\mathcal P}_{2,1}$ using Definitions~\ref{def:maassparabolic} and \ref{EisensteinSeries}, but with $\Phi$ replaced by $  E_{P_{\rm Min}}\Big(\left(\begin{smallmatrix}  y_2 &\\
& 1\end{smallmatrix}\right), \,v \Big) $.

In more detail, recall the slash notation $F(g)\Big|_\gamma := F(\gamma g)$ on functions $F:\frak h^n\to\mathbb C$, in which  $n\ge 2$ and $\gamma\in \GL(n,\mathbb R)$.
 Then we form the new Eisenstein series
\begin{align*}
E_{\mathcal B, \text{new}}(y,s) & := \sum_{\gamma\in P_{2,1}(\mathbb Z)\backslash \SL(3,\mathbb Z)} E_{P_{\text{\rm Min}}}\Big(\left(\begin{smallmatrix}  y_2  &\\
& 1\end{smallmatrix}\right), \,\nu \Big) \Big(y_1^2 y_2\Big)^{\frac12+z_1}\bigg|_{\displaystyle \gamma}
\\
& =    \sum_{\gamma\in U_3(\mathbb Z)\backslash \SL(3,\mathbb Z)} y_2^{\frac12+\nu} \left(y_1^2 y_2   \right)^{\frac12+z_1}\bigg|_{\displaystyle \gamma},
\end{align*}
which is a minimal parabolic Eisenstein series for $\SL(3,\mathbb Z).$
By   (\ref{langlandsparamsforPhi}) and (\ref{sizi}), we see $E_{\mathcal B, \text{new}}$ has  Langlands parameters $(z_1+v,z_1-v,-2z_1)$.

\vskip 5pt
{\bf Step 2:}  It then follows from (\ref{FirstCoeffPMin}) that the first coefficient of $E_{\mathcal B, \text{new}}$ is given by the formula
$$\zeta^*(2v) A_{\mathcal B, \text{new}}\big((1,1,1),s\big)  = \Big(\zeta^*(1+3z_1+v) \zeta^*(1+3z_1-v)   \Big)^{-1},$$
where the factor of $\zeta^*(2v)$ on the left corresponds to the factor in (\ref{FirstCoeffPMin}) corresponding to the (1,2) entry of the upper $2\times 2$ block of the Levi of $\mathcal P_{2,1}$.  Note also that by Proposition \ref{LanglandsParPMin}, $ \zeta^*(2v) E_{P_{\rm Min}}\Big(\left(\begin{smallmatrix}  y_2 &\\
& 1\end{smallmatrix}\right), \,v \Big) $  is  normalized to have its first Fourier-Whittaker coefficient equal to one (up to a constant factor), so it plays a  role parallel to a cusp form normalized the same way.
\vskip 5pt
{\bf Step 3:}
It follows from the template method that (up to normalization) the Eisenstein series $E_{P_{2,1},\Phi}(\cdot, s)$ must have first coefficient equal to $L(1+3z_1,\phi)$.  To settle the normalization, if  $\Phi$ is a $\GL(2,\mathbb R)$ Maass form with Petersson norm $\langle\Phi, \Phi\rangle =1$, then $\Phi/A_{\Phi}(1)$ will have first Fourier coefficient equal to  an absolute constant. It follows from Proposition \ref{PropFirstCoeff} that
$$A_\phi(1)=  L^*(1, \text{Ad} \; \phi)^{-\frac12}$$
up to a constant factor. Since an Eisenstein series is generally not in $L^2$, the only canonical way to normalize it is by having its first coefficient be equal to one. This recovers part 2) of Theorem~\ref{thm:inintro}.

\vskip 8pt
We now illustrate the method in the case of $n=4$ by computing the first coefficient of the  Eisenstein series $E_{\mathcal P_{2,2},\Phi}$.  The analogous computations for the other parabolic subgroups are omitted as they are very similar.

\subsection*{\bf Computing the first coefficient of  $E_{\mathcal P_{2,2}, \Phi}$:}

Here in addition to $E_1:=E_{P_{\rm Min}}\Big(\cdot , \,v \Big) $  we take a second Eisenstein series $ E_2:= E_{P_{\rm Min}}\Big(\cdot, \,v' \Big) $ with spectral parameter matching $\phi_2$.
The completed $L$-functions associated to $\Phi^*$ are
$$L^*(w, E_1) =  \zeta^*(w+v) \zeta^*(w-v) \quad \text{and} \quad L^*(w, E_2) =  \zeta^*(w+v') \zeta^*(w-v').$$
We see from (\ref{langlandsparamsforPhi})  and (\ref{sizi}) that the Langlands parameters of
$$E_{P_{2, 2},\Phi^*}\big((1,1,1),s\big), \qquad \text{with~}s=(z_1+1,-z_1-1),$$ equal $(z_1+v,z_1-v,-z_1+v',-z_1-v')$.
It  follows from  (\ref{RankinSelbergL}) and Proposition \ref{LanglandsParPMin}
that
\begin{align*}
&
\zeta^*(1+2v)\zeta^*(1+2v')A_{E_{P_{2, 2},\Phi^*}} \big((1,1,1),s\big)    = \\
   &
\hskip40pt = \Big(\zeta^*(1 + 2z_1  - v - v')
\zeta^*(1 + 2z_1  + v - v')\\
&
\hskip 100pt
\cdot   \zeta^*(1 + 2 z_1 - v + v')
 \zeta^*(
  1 + 2 z_1  + v + v')\Big)^{-1} \\
  &
  \hskip 40pt
  = L^*\big(1+2z_1,E_1\times E_2\big)^{-1}.
  \end{align*}
  According to the template method, the same result must hold if $E_1, E_2$ are replaced by Maass forms, normalized to have first Fourier-Whittaker coefficient equal to one (up to a constant factor).  Now, if $\phi$ is a $\GL(2,\mathbb R)$ Maass form with Petersson norm $\langle\phi, \phi\rangle =1$, then $\phi/A_{\phi}(1)$ will have first coefficient equal to one (up to a constant factor). Further, if $\phi$ has spectral parameter $\frac12+\mu$, it follows from Proposition \ref{PropFirstCoeff} that
$$A_\phi(1)= \bigg(\Gamma\left(\frac12+\mu\right) \Gamma\left(\frac12-\mu\right) L(1, \text{Ad} \; \phi)\bigg)^{-\frac12}$$
up to a constant factor. Hence,  we may replace the minimal parabolic Eisenstein series $E_1, E_2$ by
Maass forms
 \[ \left|\Gamma\Big(\frac12+v\Big)\right|\sqrt{ L(1, \text{\rm Ad}\;\phi_1)}\cdot \phi_1 \quad\mbox{and} \quad \left|\Gamma\Big(\frac12+v'\Big)\right|\sqrt{L(1, \text{\rm Ad}\;\phi_2)} \cdot \phi_2,\]
respectively.
It follows that the first coefficient of   $E_{\mathcal P_{2,2}, \Phi}$, where $\phi_1,\phi_2$ are Maass forms of norm 1 on $\GL(2)$ with spectral parameters $\frac12+v, \frac12+v'$, is given by
\begin{align*}
&
\Bigg(L(1, \text{\rm Ad}\; \phi_1)^{\frac12} \, L(1, \text{\rm Ad}\; \phi_2)^{\frac12}
  \left| \Gamma\left(\frac12+v  \right)\; \Gamma\left(\frac12+v' \right)  L^*\big(1+2z_1,\phi_1\times\phi_2\big)\right|\Bigg)^{-1},
  \end{align*}
  up to multiplication by an absolute constant.
\qed

\vskip 15pt

We record the first coefficient and the $m^{th}$ Hecke eigenvalue entries for each Eisenstein series in the following table.  The entry for the minimal parabolic Eisenstein series with partition 4 =1+1+1+1 follows from Proposition~\ref{LanglandsParPMin}, while the rest are obtained from   Proposition~\ref{Prop10.9.3}.

The following table lists, for each partition, the Maass form $\Phi$ (with its associated spectral parameters), the values of $s$-variables, Langlands parameters, and the Fourier-Whittaker coefficients of the $\SL(4, \mathbb Z)$ Eisenstein series $E_{\mathcal P,\Phi}$. Note that $\mathcal P_{1,1,1,1}= \mathcal P_{\rm Min} =\mathcal B.$

The first coefficient entries in the table of Fourier coefficients of $\SL(4,\mathbb Z)$ Eisenstein series are well known in much greater generality (see, for example, Theorem~7.1.2 of \cite{Shahidi2010}).  This along with an analysis of the generic Fourier coefficients of spherical cusp forms is the subject of Part II of this paper.

\begin{center}
\textbf{FOURIER COEFFICIENTS OF $\SL(4,\mathbb Z)$ LANGLANDS EISENSTEIN SERIES}
\vskip 6pt
\nopagebreak
\includegraphics[scale=.76]{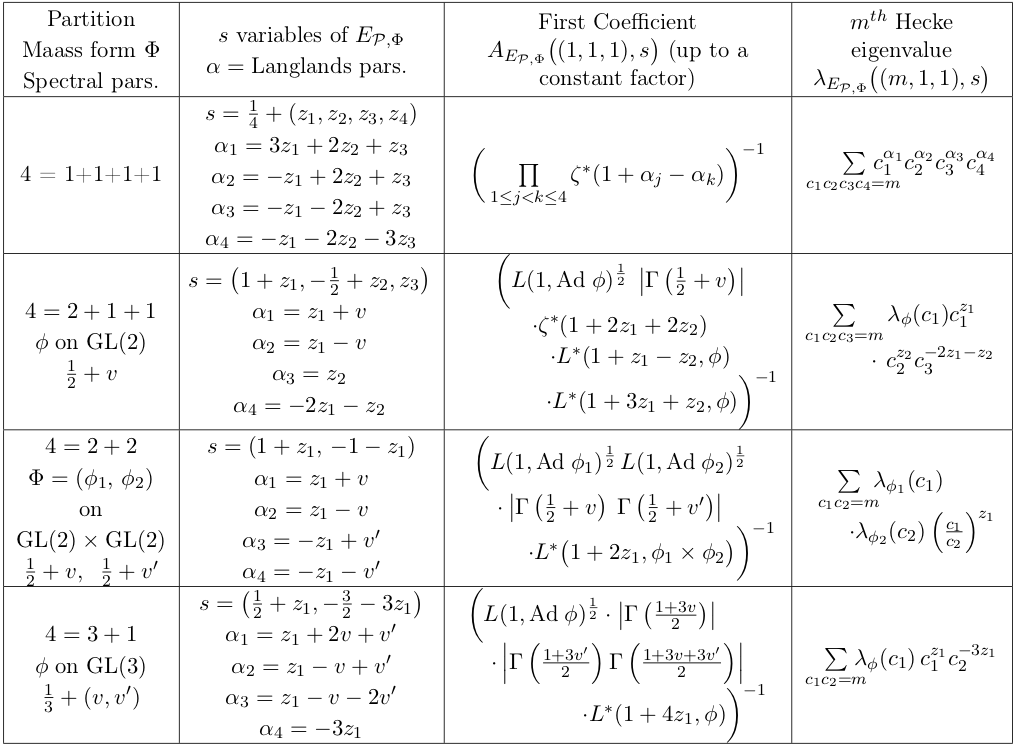}
\end{center}


\begin{remark}
The formulas given here for the first coefficient are valid when the inducing function on $\GL(n_i)$, for $n_i>1$, is a Maass cusp form.  Analogous but somewhat different results can be shown when the inducing function is a constant function.
\end{remark}

\part{Template method for Chevalley groups}

Having described the main results in classical terms for $\GL(n)$ in Part I, we now turn to general Chevalley groups.  Chevalley groups are particular realizations of split semisimple Lie groups endowed with a coordinate system satisfying certain integrality properties.  This algebraic structure allows their definition to be extended to arbitrary fields.  Also, Chevalley groups can be written in terms of generators and relations involving $\SL(2)$-subgroups.  A standard reference is Steinberg's Yale monograph \cite{Steinberg}, though some of the terminology used there has changed over the years.  For a more recent account close to ours, see \cite[\S2]{HundleyMiller}.

\section{Introduction to roots and powers in the elementary example of the Chevalley group $\SL(3)$}\label{sec:Chevalley1}

The template method works equally well for general Chevalley groups as it does for $SL(n)$ or  $PGL(n)$. To illustrate our methods and results we  begin with the special case of the Chevalley group $\SL(3)$ of rank 2, and illustrate some of the basic concepts over the field $\R$. Its associated special linear Lie algebra $\pmb{\frak s} \hskip 0.5pt \pmb l_3(\R)$ is the algebra of real $3\times 3$ matrices with trace zero and Lie bracket given by
$[X,Y]:= XY - YX$ with $X,Y\in \pmb{\frak s } \hskip 0.5pt \pmb l_3(\R).$

\begin{definition} {\bf (Cartan subalgebra)}
A Cartan subalgebra $\frak a$ of $\pmb{\frak s} \hskip 0.5pt \pmb l_3(\R)$  is
a maximal abelian subalgebra consisting of diagonalizable matrices in $\pmb{\frak s} \hskip 0.5pt \pmb l_3(\R)$.
\end{definition}
\begin{remark} The abelian condition simply means $[X,Y]=0$ for all $X,Y\in\frak a$.  The subalgebra of  all diagonal matrices $H=\diag(h_1,h_2,h_3)$ having $\tr(H)=h_1+h_2+h_3=0$ is a Cartan subalgebra, and it is a general fact all other Cartan subalgebras of  $\pmb{\frak s} \hskip 0.5pt \pmb l_3(\R)$ are $\SL(3,\R)$-conjugate to $\frak a$.
\end{remark}

 For all $X\in \pmb{\frak s } \hskip 0.5pt \pmb l_3(\R)$ we define the linear map $\text{ad}(X): \pmb{\frak s } \hskip 0.5pt \pmb l_3(\R) \to \pmb{\frak s } \hskip 0.5pt \pmb l_3(\R)  $ given by $\text{ad}(X)(Y) := [X,Y].$ For $H\in \frak a$, an eigenvector of $\text{ad}(H)$ is   an element $X\in  \pmb{\frak s } \hskip 0.5pt \pmb l_3(\R)$ such that
 $$\text{ad}(H)(X) = \lambda \cdot X$$
for some eigenvalue $\lambda \in \mathbb C.$  Define $E_{i,j}$ to be the elementary $3\times 3$ matrix with a one at the $(i,j)$-th position and zeros elsewhere. A simultaneous eigenbasis for $\pmb{\frak s } \hskip 0.5pt \pmb l_3(\R) $ is given by  the six elementary matrices $E_{i,j}$ with  $1\le i,j\le 3$ and $i \ne j$, along with any basis of $\frak a.$ For $H = \text{diag}(h_1,h_2,h_3) \in \frak a$ we compute
$$\text{ad}(H)(E_{i,j}) = (h_i-h_j) \cdot E_{i,j}.$$
Since $[H,H'] = 0$ for all $H,H'\in\frak a$ we see that $\text{ad}(H)(H')=0.$ The linear maps $H \to h_i-h_j$ (for $i\neq j$) are termed roots:

\begin{definition} {\bf (Roots associated to the Cartan subalgebra $\frak a$)}\label{def:6.2} Let $H = \text{\rm diag}(h_1,h_2,h_3) \in \frak a$.
The simple roots $\Sigma := \{\a_1,\,\a_2\}$ are linear functionals on $\frak a$, defined by the formula
$$\a_j(H) := h_j-h_{j+1}, \qquad\quad  (1\le j \le 2).$$
The positive roots $\D_+$ are the linear functionals which map $H \in \frak a$ to $h_i-h_j, \;(1\le i<j \le 3)$. It is easy to see that $$\Delta_+ := \{\a_1,\a_1+\a_2, \a_2   \}.$$ The negative roots are $\Delta_- := - \Delta_+$ and the set of all roots $\Delta$  is the disjoint union of $\Delta_+$ and $\Delta_-$.
\end{definition}

\begin{definition}\label{def:dualcartan}{\bf (The complex dual of the Cartan)}
We also define $\frak a_{\mathbb C}^* := \mathbb C \alpha_1 \oplus \mathbb C \alpha_2$, and for $
\lambda = s_1 \alpha_1 + s_2\alpha_2 \in \frak a_{\mathbb C}^*$ with $s_1,s_2\in\mathbb C$ we define
$$\lambda(H) := s_1 \alpha_1(H) +s_2\alpha_2(H), \qquad (\text{for all} \; H\in\frak a).$$
\end{definition}

\begin{definition}\label{def:rootspaceXalpha} {\bf (The root space ${\mathcal X}_\alpha$ and root vectors $X_\alpha$)} Let $\alpha\in \Delta.$ We define the root space ${\mathcal X}_\alpha := \big\{ X\in \pmb{\frak s } \hskip 0.5pt \pmb l_3(\R) \; \big | \; [H,X] = \alpha(H)\cdot X \;\text{for all} \;H\in\frak a\big\}.$ It is always the case that  $\text{\rm dim}\; {\mathcal X}_\alpha =1$, so we usually instead consider a fixed nonzero element $X_\a\in {\mathcal X}_\a\subset  \pmb{\frak s } \hskip 0.5pt \pmb l_3(\R)$, called a root vector.  The notion of Chevalley group introduces a scaling constraint on which root vector in $\mathcal X_\alpha$ to choose.
\end{definition}

\begin{definition} {\bf (One-parameter subgroups associated to $\a$)} Given $\alpha\in \Delta$ and $X_\a$ as in Definition~\ref{def:rootspaceXalpha}, the matrix exponentials $u_\alpha(t)=\exp(t X_\a)$  ($t\in\R$) define a one-parameter subgroup of $\SL(3,\R)$.  Since $X_\alpha$ is nilpotent, this exponential is actually a finite sum; in particular, the matrix entries of $u_\alpha(t)$ are  polynomials in $t$.
\end{definition}

\begin{definition} {\bf (Characters of exponentials on $\frak a$)}\label{def:charsofexpsona}
Let  $H\in\frak a.$ For a general element $ \lambda = s_1 \alpha_1 + s_2\alpha_2 \in \frak a_{\mathbb C}^*$ we define
$$\exp(H)^\lambda := \exp(\lambda(H)) = \exp(\alpha_1(H))^{s_1} \cdot \exp(\alpha_2(H))^{s_2}.$$
\end{definition}

In the special case that $\lambda=\alpha\in\Delta$ is itself a root, this definition gives an extension of the root notation  (which first appears above as a linear functional on $\frak a$) to   a homomorphism of the image of the exponential map on $\frak a$.  For example, if $H=\operatorname{diag}(\log (t_1),\log (t_2),\log (t_3))\in\frak a$ and $t = \operatorname{diag}(t_1,t_2,t_3)=\exp(H)$, then $t^{\a_1}=t_1/t_2$ and $t^{\a_2}=t_2/t_3$.  These last two maps are algebraic homomorphisms, in that they are defined by rational functions on diagonal matrices (not just ones with positive entries as in the following definition).

\begin{definition} {\bf (The power function $t \to t^\lambda$)}\label{def:charsont}
Let  $H\in\frak a$ and set $t = \exp(H)$ where
$H=\operatorname{diag}(\log (t_1),\log (t_2),\log (t_3))$ and
$t=\operatorname{diag}(t_1,t_2,t_3)$ with $t_i>0$.  Then
$$\alpha_1(H) \ \ = \ \ \log(t_1)-\log(t_2), \qquad \alpha_2(H) \ \ = \ \ \log(t_2)-\log(t_3)$$
(see Definition~\ref{def:6.2}).
It follows that for $\l=s_1\alpha_1+s_2\alpha_2$ as in Definition \ref{def:charsofexpsona},
$\;\boxed{t^\l=\exp(H)^\lambda=t_1^{s_1}t_2^{s_2-s_1}t_3^{-s_2}.}$
\end{definition}

The power function $t\to t^\lambda$ in the above definition can be used to construct the eigenfunctions $I_s$ of definition \ref{def:conversion} in the more general setting of Chevalley groups. The constructions of Whittaker functions and Eisenstein series occurring in part 1 (the template method for $\GL(n)$) of this paper are given by certain integrals and sums of the power functions $I_s$, respectively. We now have the tools to generalize these constructions to Chevalley groups in the adelic setting which allow us to rederive Shahidi's  beautiful formulas   for the Fourier coefficients of Eisenstein series for Chevalley groups that form the theoretical basis of the template method.

\section{Notation and background on Chevalley groups}\label{sec:Chevalley2}

Let $G$ be a Chevalley group with root system $\Delta\subset \R^r$, where $r$ is the rank of $G$.  We fix a choice of positive roots $\Delta_+\subset \Delta$, so that $\Delta$ is the disjoint union of $\Delta_+$ and the negative roots $\Delta_{-}=-\Delta_+$.  Let  $\rho$ denote the half-sum of all positive roots and let $\Sigma=\{\alpha_1,\ldots,\alpha_r\}\subset \Delta_+$ denote the simple roots.
 The root system   comes equipped with a positive-definite inner product $(\cdot,\cdot)$, the restriction of the euclidean inner product on $\R^r$ to $\Delta$.  This allows us to identify the $\R$-span of the root system with its linear dual.  Given a root $\alpha$,
we use the notation $\alpha^\vee=\frac{2}{(\a,\a)}\a$ for the coroot of $\alpha\in\Delta$.  For many purposes it is convenient to think of coroots as dual to roots using the identification via $(\cdot,\cdot)$, and to write $\langle \alpha,\beta^\vee\rangle$ for $(\a,\b^\vee)=2\frac{(\a,\b)}{(\b,\b)}$.

The fundamental weights $\varpi_{\alpha_1},\ldots,\varpi_{\alpha_r}$ are the basis of the $\Q$-span of $\Delta$ dual to the simple coroots under this pairing, i.e., they satisfy the relation
\begin{equation}\label{fundweightdef}
  \langle \varpi_{\alpha_i},\alpha_j^\vee\rangle \ \ = \ \ \delta_{i=j} \ \ = \ \ \left\{
                                                                                    \begin{array}{ll}
                                                                                      1, & i=j, \\
                                                                                      0, & i\neq j.
                                                                                    \end{array}
                                                                                  \right.\end{equation}
Sometimes we write $\varpi_i$ as shorthand for $\varpi_{\a_i}$.

By definition, the Chevalley group comes equipped with a special basis of the Lie algebra of $G$, on which $G$ acts algebraically by the adjoint action.  In more detail, this ``Chevalley''  basis  $\{X_\alpha|\alpha\in\Delta\}\cup\{H_\alpha|\alpha\in\Sigma\}$
contains root vectors $X_\alpha$ for each $\alpha\in \Delta$,  where $H_\alpha=[X_\alpha,X_{-\alpha}]$.
 The integral span of   the Chevalley basis is termed the ``Chevalley lattice''.

Each $X_\a$, for $\alpha\in \Delta$, exponentiates to an algebraic subgroup $\{u_\alpha(\cdot)\}$ isomorphic to $\mathbb G_a$, subgroups which (like $G$ itself) are defined over any field.

\begin{definition}\label{def:N} {\bf (Maximal unipotent subgroup $N$) }
Let $N$ denote the algebraic subgroup of $G$ containing all the  one-parameter subgroups  $\{u_\alpha(\cdot)\}$, $\alpha \in \Delta_{+}$.
\end{definition}

Products of unipotent elements define the elements

\begin{equation}\label{halpha}
  w_\alpha(t) \ \ = \ \ u_\alpha(t)u_{-\alpha}(-t\i)u_\alpha(t)\ \text{and} \
  h_\alpha(t) \ \ = \ \ w_\alpha(t)w_\alpha(1)\i\,, \ (t\neq 0)\,,
\end{equation}
over any field.  The maps $h_{\alpha_j}(\cdot)$, $1\le j\le r$, are naturally associated with the simple coroots $\alpha_j^\vee$, as are the Chevalley basis vectors $H_{\a_j}$.  Each $h_{\alpha_j}$ maps $\mathbb{G}_m$ to an abelian algebraic subgroup of $G$.

\begin{definition}\label{def:T}{\bf (Maximal torus $T$)}
Let $T$ denote the smallest algebraic subgroup of $G$ containing the images of each $h_{\alpha_j}(\cdot)$, $1\le j \le r$.
\end{definition}

The elements $w_\alpha(1)$ all lie in $G(F)$ for any field $F$, and
 the group generated by $\{w_\alpha(1)|\alpha\in \Delta\}$ contains a full set of representatives for the Weyl group $W$ of $\Delta$; we will tacitly identify Weyl group elements with  any such fixed set of representatives.

 \begin{definition}{ \bf (Borel subgroup)} \label{def:borel} The Borel subgroup of  the group  $G$  is $B=TN=NT$,  the semidirect product of $N$ and $T$.  \end{definition}

\noindent
{\bf Example:} On $\SL(n)$ or $\rm{PGL}(n)$, we have
 $$T = \left\{ \left(\begin{smallmatrix} \displaystyle * & &\\
& \ddots &\\
& &\displaystyle * \end{smallmatrix}\right)\right\}, \qquad N = \left\{\left(\begin{smallmatrix} \displaystyle 1 & &\phantom{.}_{_{_{\displaystyle *}}}\\
&\; \ddots &\\
& &\displaystyle 1 \end{smallmatrix}\right)\right\}, \quad \text{and} \
\quad B =  \left\{\left(\begin{smallmatrix} \displaystyle * & &\phantom{.}_{_{_{\displaystyle *}}}\\
&\; \ddots &\\
& &\displaystyle * \end{smallmatrix}\right)\right\}.$$

\begin{definition}{ \bf{(Parabolic subgroups)}}\label{def:parabolic}
  A  standard parabolic subgroup is any subgroup containing the Borel subgroup, and a parabolic subgroup is any conjugate of a standard parabolic subgroup.  For example, the Borel  subgroup is a minimal parabolic subgroup. A standard parabolic  has a decomposition of the form $P=LU$, where $U\subset N$ is its unipotent radical and $L$ is a reductive Levi component.  In general $L$ in this decomposition is determined only up to conjugation by $U$.  In the context of Chevalley groups, there is a particularly convenient choice of $L$ (which we call a ``standard Levi'') containing $T$ and the derived subgroup $L_{der}$, where $L_{der}$ is itself a Chevalley group with root system $\Delta_L$ containing all of the one-parameter subgroups $\{u_\alpha(\cdot)\}$ ($\alpha\in \Delta_L$).
\end{definition}

\vskip 1pt
\noindent
\begin{remark} See Definition \ref{GLnParabolic} for examples of standard parabolic subgroups of $\GL(n).$
\end{remark}

\vskip 10pt

Let $P=LU$ denote a standard parabolic with $L$ a standard Levi.  Let $\Delta_U=\Delta_{+}-\Delta_L$ denote the  set of roots $\a$ whose one-parameter subgroups are contained in $U$.
Let $W_L\subset W$ denote the Weyl group of $L$.  Let $w_{\rm{long}}\in W$ denote the long Weyl group element in $W$, and $w_{\rm{long},L}$ denote the long Weyl group element in $W_L$.
Conjugation by $w_{\rm{long}}$ maps $L$ to the isomorphic standard Levi subgroup $L'=w_{\rm{long}}Lw_{\rm{long}}\i$ of the (possibly same) standard parabolic  $P'=L'U'$, but note that $w_{\rm{long}}Pw_{\rm{long}}\i$ is not a standard parabolic; instead it contains
 $$B_{-}=w_{\rm{long}}\i \, B \, w_{\rm{long}},$$   the  parabolic opposite to $B$ (that is, $B_{-}=TN_{-}=N_{-}T$, where $N_-$ contains all one-parameter subgroups $\{u_\alpha(\cdot)\}$ for $\alpha\in\Delta_{-}$).

\begin{definition}\label{def:Bruhat} {\bf (Bruhat decomposition)}
For any field $F$, the group $G(F)$ satisfies the Bruhat decomposition $G(F)=B(F)\cdot W \cdot N(F)$, where as mentioned above the Weyl group $W$ is identified with a set of representatives in $G(F)$.
\end{definition}

 For any standard parabolic $P$ one has the disjoint union
\begin{equation}\label{bruhatdisjoint}
  G(F) \ \ = \ \ \bigsqcup_{w\in W_L\backslash W} P(F)wN_w(F),
\end{equation}
 where $N_w=(w\i U_{-}w)\cap N$, and $U_{-}$ is the unipotent subgroup containing all one parameter subgroups for $\a\in\D_{-}-\D_L$.  We also have the unipotent groups $N^w=(w\i Pw)\cap N$, $w\in W$.

\subsection*{Chevalley groups over the adeles}

 By definition, the subgroup of  $G(\Q)$ which (setwise) maps Chevalley lattice to itself is called the ``integral points'' $G(\Z)$  of $G(\Q)$.\footnote{Note that this notion of integrality is independent of the realization of the algebraic group $G$ in matrices.}  It has local analogs $G(\Z_p)$, defined as the stabilizer in $G(\Q_p)$ of the  Chevalley lattice tensored with $\Z_p$.
Since the one-parameter unipotent subgroups $\{u_\alpha(\cdot)\}$ are isomorphic to ${\mathbb G}_a$, their $\R$-points have natural Haar measures assigning volume 1 to  $\{u_\alpha(t) \mid 0\le t \le 1\}$, and their $\Q_p$ points similarly have natural Haar measures assigning volume 1 to  $\{u_\alpha(t) \mid   t\in\Z_p\}$.   Taking the product over all places $v\le\infty$ gives an adelic Haar measure for which the image of $\A/\Q$ has volume 1.  Furthermore, taking products of these adelic Haar measures over these various one-parameter subgroups  gives Haar measures on each $N_w(\A)$ and $N^w(\A)$, ($w\in W$), assigning the quotients $N_w(\Q)\backslash N_w(\A)$ and $N^w(\Q)\backslash N^w(\A)$  volume 1.  (This includes $N$  itself as a special case.)

The local groups $G(\R)$ and $G(\Q_p)$ have Iwasawa decompositions $G(\Q_v)=B(\Q_v)K_v$, where $K_p=G(\Z_p)$ for $p<\infty$ and $K_\infty$ is the maximal compact subgroup of $G(\R)$ whose Lie algebra is the complex span of all $X_\alpha-X_{-\alpha}$, ($\alpha\in \Delta$).  $B(\Q_v)$ further decomposes as $B(\Q_v)=T(\Q_v)N(\Q_v)=N(\Q_v)T(\Q_v)$.  As in Definition~\ref{def:charsont}, for any $\alpha\in\Delta$ the map $t\mapsto t^\alpha$ defines an algebraic homomorphism of $T(\Q_v)$.  Composition of this map with the valuation $|\cdot|_v$ of $\Q_v^*$ gives a real-valued character of $T(\Q_v)$. Generalizing Definition~\ref{def:charsont}, products of arbitrary complex powers of such characters,
\begin{equation}\label{TQpowerchar}
  t \ \ \mapsto \ \ |t|_v^{\l} := |t^{\a_1}|_v^{s_1}\cdots  |t^{\a_r}|_v^{s_r},
\end{equation}
 therefore assign characters of $T(\Q_v)$ to any $\l=s_1\a_1+\cdots + s_r\alpha_r\in \frak a_\C^*$, the complex span of $\Sigma$.  We parameterize such characters with a $\rho$-shift:

\begin{definition}{\bf{(The power functions $\Phi_{v,\lambda}$ and $\Phi_\lambda$)}}\label{def:powerfunction}
Let  $$\Phi_{v,\lambda}(g):=|a(g)|_v^{\l+\rho}$$ (or sometimes written just as $a(g)^{\lambda+\rho}$, when no confusion will arise), where $a(g)$ denotes a $T(\Q_v)$-factor in the Iwasawa decomposition of $g\in G(\Q_v) = N(\Q_v)T(\Q_v)K_v$. (Although $a(g)$ itself is not well-defined, $|a(g)|_v^{\l+\rho}$ is.)
\vskip 4pt
 The $\Phi_{v,\lambda}$ have the global analogs $\Phi_\lambda=\prod\limits_{v\le\infty}\Phi_{v,\l}$ (also written $|\cdot|^{\l+\rho}$), which for varying $\lambda\in\frak a_\C^*$ restrict to a continuous family of  unramified\footnote{That is, fixed under $T(\Q_v)\cap K_v$.} characters  of $T(\A)$ which are trivial on $T(\Q)$; in fact, all unramified characters of $T(\Q)\backslash T(\A)$ arise this way.

\end{definition}

 Let $P=LU$ be a standard parabolic as in Definition~\ref{def:parabolic} and let $\Sigma^L=\Sigma -\Delta_L=\Sigma\cap \Delta_U$.  If  $\lambda+\rho$ is a complex linear combination of the fundamental weights $\varpi_\alpha$, for $\alpha\in\Sigma^L$, then $\Phi_{\l}$ extends from a character of $T(\Q)\backslash T(\A)$ to a character of $L(\A)$ trivial on both $L_{der}(\A)$ and $L(\Q)$; this is because $\l+\rho$ is trivial on $H_\a$ for $\a\in\D_L$.  However, $\Phi_\l$ may still be nontrivial on $Z_L(\A)$, where $Z_L$ is the center of $L$.

Let $\frak a^*_\R\subset \frak a_\C^*$ denote the $\R$-span of $\Sigma$, i.e., the ambient real vector space of the root system $\Delta$.

\begin{definition}{\bf (The dominant chamber})
The dominant chamber $\{\mu\in \frak a^*_\R| \langle \mu,\alpha^\vee\rangle \ge 0, \alpha\in\Sigma\}$ is a fundamental domain for the action of the Weyl group $W$ on $\frak a^*_\R$.
\end{definition}

 Elements of the interior of the dominant chamber (i.e., where the defining inequality is strict for each $\alpha$) are said to be {\em strictly dominant}, and the Weyl translates of strictly dominant elements cover all of $\frak a^*_\R$ aside from a finite union of hyperplanes.  The {\em Godement range} consists of all $\lambda\in \frak a^*_\C=\frak a^*_\R+i\frak a^*_\R$ such that $\Re (\lambda-\rho)$ is strictly dominant.

\begin{definition}{\bf{(Commonly chosen additive characters of $\Q_v$)}}
Each local completion $\Q_v$ has a standard choice of additive character $\chi_v:\Q_v\rightarrow \C^*$ which is trivial on $\Z$.  For $v=\infty$ one takes $\chi_\infty(x)=e^{2\pi i x}$ and for $v=p<\infty$ one  takes $\chi_p(x)=e^{-2\pi i x_f}$, where $x_f$ is  any rational number such that $x-x_f\in\Z_p$.  These characters are well-defined and their global adelic product $\chi=\prod\limits_{v\le \infty}\chi_v$ is trivial on $\Q$.
\end{definition}

Elements  $n\in N(\A)$ can be written as products of factors $u_\alpha(x_\alpha)$, where $\alpha$ ranges over all positive roots and $x_\alpha\in\A$ depends only on $n$ and the chosen ordering of the roots in the product.  The additive homomorphism
\begin{equation}\label{def:Csum}
  C:N(\A)\rightarrow\A ,  \ \ n\mapsto \sum\limits_{\alpha\in\Sigma}x_\alpha,
\end{equation}
is well-defined independently of the choice of ordering.  In the particular example of $G=\SL_n$, this sum over the simple roots $\Sigma$ corresponds to the sum of the entries on the first superdiagonal of the matrix $n$.

\begin{definition}{\bf{(Commonly chosen nondegenerate character of $N(\Q)\backslash N(\A)$)}}\label{def:psichar}
The composition $\psi:=\chi\circ C$ is a standard choice of a character of $N(\A)$ which is trivial on $N(\Q)$ yet nondegenerate (i.e., not trivial on any of the one-parameter subgroups $\{u_\alpha(\cdot)\}$ for any $\alpha\in\Sigma$).
\end{definition}

For any $\a\in\D$, the restriction of $\psi$ to $u_\alpha(\cdot)$ is either $\chi$ (if $\alpha\in\Sigma$) or trivial (if $\alpha\notin\Sigma$). By restriction, $\psi$ also defines a nondegenerate character of $N_L=N\cap L=N\cap L_{der}$.

Suppose that $P=LU$ is a standard parabolic with standard Levi $L$, and that $\pi=\otimes_{v\le \infty}\pi_v$ is a cuspidal automorphic representation for $L$, which we assume is spherical at all places (meaning that for each $v\le \infty$ it has a $K_v$-fixed vector).  We also assume that $\pi$ is generic, meaning that it has some vector with a nontrivial Fourier coefficient corresponding to some nondegenerate character  of $N(\Q)\backslash N(\A)$.\footnote{This assumption is automatic when  $L_{der}$ is isogenous to a product of factors of Dynkin type $A_n$ \cite{PS1975,Shalika1973,Shalika1974}.}
Without loss of generality, we assume that $\pi$ transforms by a character under the center $Z_L$ of $L$. Moreover, we suppose that the central character of $\pi$ is unramified at all finite places, and therefore comes from a complex linear combination of fundamental weights $\{\varpi_{\a}|\a \in \Sigma^L\}$ as above.  The general tensor of an automorphic representation by such a character has the form $\pi\otimes |\cdot|^\lambda$, where $\lambda=\sum_{\alpha\in\Sigma^L} s_\alpha \varpi_\alpha$, ($s_\alpha\in \C$). We hence assume, as we may after twisting with a character of $Z_L(\Q)\backslash Z_L(\A)$, that  $\pi$ comes from an automorphic form for $L_{der}(\Q)\backslash L_{der}(\A)$, and transforms trivially under  $Z_L(\A)$.  Following a standard normalization in unitary induction, this last character is sometimes shifted by $\rho-\rho_L=\frac{1}{2}\sum_{\a\in\D_U}\a=\sum_{\alpha\in\Sigma^L}\varpi_\alpha$, where $\rho_L=\frac{1}{2}\sum_{\a\in\D_L}\a$.
Accordingly, for each individual element $\phi\in \pi$, define
\begin{equation}\label{philambdatwistdef}
  \phi_\l(g) \ \ := \ \ \phi(\ell(g))\ \Phi_{\l-\rho_L}(g) \quad\quad(g\in G(\A)),
\end{equation}
where $\ell(g)$ denotes an $L(\A)$ factor in the decomposition of \[ g\in G(\A)=P(\A)K_\A=L(\A)U(\A)K_\A=U(\A)L(\A)K_\A  \] and $K_\A=\prod_{v\le \infty}K_v$ (note that $\phi(\ell(g))$ is well-defined independently of the choice of $\ell(g)$). The function $\phi_\l$ is left-invariant under $P(\Q)$ by construction. For example, if $P=B$ is the Borel subgroup, then $L=T$, the representation $\pi$ is the trivial representation,  $\rho_L$ is zero, and $\phi_\l=\Phi_\l$.

\begin{definition}{\bf{(Eisenstein series for $G(\Q)\backslash G(\A)$})}\label{def:eis}
For a spherical vector $\phi\in\pi$ and $\lambda=\sum\limits_{\alpha\in\Sigma^L} s_\alpha \varpi_\alpha$,
 form the Eisenstein series
\begin{equation}\label{eisdef}
  E(g,\phi_\lambda) \ \  := \ \ \sum_{\gamma\in P(\Q)\backslash G(\Q)} \phi_\lambda(\gamma g)
\ \  = \ \ \sum_{\gamma\in P(\Q)\backslash G(\Q)} \phi(\ell(\gamma g))\Phi_{\l-\rho_L}(\g g)
\,,
\end{equation}
which  is spherical and absolutely convergent  when the complex variables $s_\alpha$, $\a\in \Sigma^L$, have sufficiently large real parts; the definition extends to all complex values of $s_\a$, $\a\in\Sigma^L$, by meromorphic continuation.
\end{definition}

\begin{definition}{\bf{(Satake/Langlands parameters)}}\label{def:satake}
Similarly to Definition~\ref{def:langlandsparameters}, any component $\pi_v$ of an automorphic representation has a Weyl orbit   of elements $\mu(\pi_v)\in {\frak a}_\C^*$ for which all vectors in $\pi_v$ have the same eigenvalues as $\Phi_{v,\mu(\pi_v)}$ under all Hecke operators (if $v<\infty$), or the ring of invariant differential operators $\mathcal D$ (if $v=\infty$).  Such $\mu(\pi_v)$ are known as {\it Satake} parameters (if $v<\infty$), or {\it Langlands} parameters (if $v=\infty$).
\end{definition}

We now consider this definition in the particular case of the Eisenstein series $E(g,\phi_\lambda)$, which has the same Satake/Langlands parameters as $\phi_\l$.  We take the convention that the Satake/Langlands parameters $\mu(\pi_v)$ for $\phi$, which  by definition are dual to the Lie algebra $\frak l$ of $L$, embed as elements of $\frak a_\C^*$ under the natural embedding of roots $\Delta_L\hookrightarrow \D$ by inclusion.  Then at the place $v$, $\phi$ has the same eigenvalues as $\Phi_{v,\mu(\pi_v)}$  under any invariant differential operator for $L$.  It is then a consequence of  Definition~\ref{def:powerfunction} that
a Satake parameter of $E(g,\phi_\lambda)$ is given by
\begin{equation}\label{bigSatake}
  \mu_v \ \ = \ \ \l \ + \ \mu(\pi_v) \ \ = \ \ \sum_{\alpha\in\Sigma^L}s_\alpha\varpi_\alpha \ + \mu(\pi_v) \qquad (v\le\infty).
\end{equation}
Pairings with coroots $\langle \mu_v,\a^\vee\rangle$   can be computed by decomposing $\a^\vee$ as a linear combination
\begin{equation}\label{corootassum}
  \a^\vee \ \ = \ \ \sum_{i\le r}c_i(\a^\vee)\,\a^\vee_i
\end{equation}
of simple coroots
and applying the defining property (\ref{fundweightdef}):~thus if $\Sigma^L$ is written as $\{\a_i|i\in S\}$ for some subset $S\subset \{1,2,\ldots,r\}$, then
\begin{equation}\label{bigSatakepairing}
  \langle\mu_v,\a^\vee\rangle  \ \ = \ \ \sum_{i\in S}\,c_i(\a^\vee)s_{\a_i} + \langle \mu(\pi_v),\a^\vee\rangle\,.
\end{equation}
These inner products appear  in  the  local factors of the Fourier coefficients of Eisenstein series.
Let
\begin{equation}\label{localzetafactors}
  \zeta_\infty(s)=\G_\R(s)=\pi^{-s/2}\Gamma(s/2), \quad \zeta_p(s)=(1-p^{-s})\i,
\end{equation}
and $c_v(s)=\f{\zeta_v(s)}{\zeta_v(s+1)}$ for $v\le \infty$.  Thus $$c(s):=\prod\limits_{v\le\infty}c_v(s)=\f{\zeta^*(s)}{\zeta^*(s+1)}=\f{\zeta^*(1-s)}{\zeta^*(1+s)},$$
where $\zeta^*$ is the completed Riemann $\zeta$-function from (\ref{zetadef}).
\begin{definition}{\bf{(Normalizing factors $\mathcal N_v$ and $\mathcal N$})}
  Define
\begin{equation}\label{Nlambdav}
\mathcal N_v(\mu)   \ \ = \ \  \prod_{\alpha\in\Delta_+}\zeta_v(\langle \mu,\alpha^\vee\rangle+1)
\end{equation}
for each $v\le \infty$
 and
 \begin{equation}\label{Nroots}
   \mathcal N((\mu_v)_{v\le\infty}) \ \  = \ \  \prod_{v\le\infty }\mathcal N_v(\mu_v)
 \end{equation}
 globally.
\end{definition}

\begin{definition}{\bf (The  Gindikin-Karpelevich integral)}
The global integral
\begin{equation}\label{GKintegral}
M_w(\lambda) \ \ :=
  \ \
  \int\limits_{N_{w\i}(\A)}   \Phi_\lambda(w\i n')\,dn' \ \ = \ \ \prod_{\alpha\in\Delta_+\cap w\i\Delta_{-}}
  c(\langle \lambda,\alpha^\vee\rangle)
\end{equation}
and its local analogs
\begin{equation}\label{GKlocal}
 M_v(\l)=\int\limits_{N_{w\i}(\Q_v)}   \Phi_{v,\lambda}(w\i n')\,dn'=c_v(\langle \lambda,\alpha^\vee\rangle)\qquad(v\in V)
\end{equation}
are known as the Gindikin-Karpelevich integrals.   These integrals and identities again initially make sense for $\lambda \in \frak a_\C^*$ having $\Re\langle \lambda,\alpha^\vee\rangle$ large\footnote{Specifically, positive for the local integrals and greater than 1 for the global integral.} for each $\alpha\in\Sigma$, and extend by meromorphic continuation to all of $\frak a_\C^*$.
\end{definition}

The Gindikin-Karpelevich integrals are the main computational tool  in Langlands' constant term formula (\ref{constterm2}) for Borel Eisenstein series via the calculation
 \begin{equation}\label{GKintegralwithPhi}
 \aligned
    \int\limits_{N_{w\i}(\A)}   \Phi_\lambda(w\i n'g)\,dn' \ \  &  = \ \
     \int\limits_{N_{w\i}(\A)}   \Phi_\lambda(w\i n'n(g)a(g))\,dn'     \\ & =  \ \
       \int\limits_{N_{w\i}(\A)}   \Phi_\lambda(w\i n'a(g))\,dn' \\ & =  \ \
 a(g)^{w\lambda+\rho} \,         \int\limits_{N_{w\i}(\A)} \Phi_\lambda(w\i n')\,dn' \\ & = \ \
         M_w(\lambda)\,\Phi_{w\lambda}(g)\qquad(g\in G(\A)),
\endaligned
 \end{equation}
where in the first equality we have factored $g=n(g)a(g)k(g)$ in Iwasawa form, in the second equality we have used the definitions of $N_{w\i}$ and $N^{w\i}$ to change variables and remove the contribution from $n(g)$,  and in the third step   changed variables $n'\rightsquigarrow a(g)n' a(g)\i$ (incurring a change of measure factor $a(g)^{\rho-w\rho}$) and used the left-transformation properties of $\Phi_{\lambda}$ under $a(g)$.

\subsection*{Example: The  $A_{n-1}$ root system and $\SL(n)$}

This explanation in fact works equally well for $\PGL(n)$, which is also a Chevalley group of rank $r=n-1$. The Cartan Lie subalgebra $\frak a$ consists of all diagonal matrices $H=\diag(h_1,\ldots,h_n)$ having $\tr(H)=h_1+\cdots+h_n=0$.
The simple roots $\{\a_1,\ldots,\a_{n-1}\}$ are linear functionals on $\frak a$, defined by the formula
$$\a_j(H)=h_j-h_{j+1},\qquad\quad (1\le j \le n-1).$$
The coroots can be identified with the roots, since $A_{n-1}$ is simply laced (i.e., $(\a,\a)$ is independent of $\a$).
The positive roots $\D_+$ are the linear functionals mapping $H$ to $h_j-h_k$, $j<k$; this last linear functional can be written as $\a_j+\a_{j+1}+\cdots+\a_{k-1}$, and so $\Delta_+$ is the set of sums $\sum_{j\le \ell<k}\a_\ell$.  Since roots are identified as coroots, this shows that the coefficient $c_i(\alpha^\vee)$ from  (\ref{corootassum}) of such a root is 1 or 0, depending on whether or not $j\le i < k$.

  The Cartan subalgebra $\frak a$ has a pairing on it given by $(X,Y):=\tr(XY)$ (i.e., the Killing form), which extends to  $\frak a^*$ by duality.  We can thus freely identify $\a_j$ with $(0,0,\ldots,0,1,-1,0,\ldots,0)$, i.e., the vector which has zeros in all positions except for a 1 in the $j$-th position, and a $-1$ in the $(j+1)$-st position.
A natural choice for the fundamental weights $\{\varpi_1,\ldots,\varpi_{n-1}\}$  in these coordinates would be
$$\varpi_1=(1,0,\ldots,0), \;\;\; \varpi_2=(1,1,0,\ldots,0), \;\;\; \varpi_3=(1,1,1,0,\ldots,0),\ldots,$$
i.e., $\varpi_j$'s first $j$ entries are 1, and the rest are zero.  However, these vectors  are technically not linear combinations of the $\a_j$ since their entries do not sum to zero.  One can remedy this either by regarding $\frak a^*$ as a quotient of $\R^n$, or instead by simply subtracting $\f jn$ from each entry of $\varpi_j$ listed above to make the entries sum to zero (this reflects the subspace-quotient duality in passing from $\frak a$ to $\frak a^*$).   It is easy to verify that the fundamental weights satisfy
 their defining property (\ref{fundweightdef}).

Standard parabolic subgroups $P=LU$ for type $A_{n-1}$ are indexed by partitions $(n_1,\ldots,n_p)$ of $n$.  The set $\Sigma^L$ of simple roots not in $L$ equals $$\{\a_{n_1},\a_{n_1+n_2},\a_{n_1+n_2+n_3},\ldots,\a_{n-n_p}\}.$$
For example, the $(1,1,1)$-parabolic of $\SL_3$ has $\Sigma^L=\{\a_1,\a_2\}$, while the $(2,1)$-parabolic of $\SL_3$ has $\Sigma^L=\{\a_2\}$.

We conclude this section by explaining how this notation specializes to the notation $|\cdot|_{\mathcal P}^s$ from Section~\ref{sec:Basicnotation}, restricting to the case of $n=3$ in order to keep the notation brief.  For the minimal parabolic $B$, we have just seen $\Sigma^L=\{\a_1,\a_2\}$ and $\lambda=s_{\a_1}\varpi_{\a_1}+s_{\a_2}\varpi_{\a_2}$, which can be written as $(s_{\a_1}+s_{\a_2},s_{\a_2},0)$ in the coordinates above.  The function $|\cdot|^\lambda$ from (\ref{TQpowerchar}) is defined via writing $\lambda$ as sum of positive roots, so we subtract the mean value $\frac{1}{3}(s_{\a_1}+2s_{\a_2})$ from each entry of this last vector in order to make the entries sum to zero, and find
$\lambda=\frac{1}{3}(2s_{\a_1}+s_{\a_2})\a_1+\frac{1}{3}(s_{\a_1}+2s_{\a_2})\a_2$.
With $t=\operatorname{diag}(t_1,t_2,t_3)$, $t_i>0$, we compute \eqref{TQpowerchar}
as
 $$\(t_1^{2s_{\a_1}+s_{\a_2}}t_2^{s_{\a_2}-s_{\a_1}}t_3^{-s_{\a_1}-2s_{\a_2}}\)^{1/3}.$$
In the case of $\SL_3$, where $t_1t_2t_3=1$, this last expression simplifies to $t_1^{s_{\a_1}+s_{\a_2}}t_2^{s_{\a_2}}$, which is what one expects from the starting formula $(s_{\a_1}+s_{\a_2},s_{\a_2},0)$ in this calculation.  It follows that the power functions in
 \eqref{Ialpha} and Definition~\ref{def:conversion}, which are themselves related by a change of coordinates of the parameters, are special cases of $\Phi_{\infty,\l}$ from Definition~\ref{def:powerfunction}.  The definitions of Eisenstein series in
Definitions~\ref{MinParEisSeries} and \ref{def:EPhi}  are then special cases of Definition~\ref{def:eis}, as can be seen by the standard conversion between classical and adelic notation (using, in particular, the fact that the inclusion $G(\Z)\hookrightarrow G(\Q)$ induces a bijection
between $(P(\Q)\cap G(\Z))\backslash G(\Z)$ and  $P(\Q)\backslash G(\Q)$).

\section{A canonical normalization of Whittaker functions}\label{sec:whit}

In this section we will define a fixed ``canonical'' normalization of spherical Whittaker functions, in order to later compare Fourier coefficients of cusp forms on a Levi subgroup to those of Eisenstein series induced from them.  This is necessary because for fixed values of $\l\in\frak a_\C^*$, Whittaker functions are only unique up to scaling.  When one varies $\l$ there is an additional choice of function to multiply them by.  This ambiguity of a nonvanishing entire function  of $\l$ is removed by our choice, which is natural from several points of view:~for example, it recovers the classical archimedean Whittaker function $2\sqrt{y}K_\nu(2\pi y)$ (asymptotic to $e^{-2\pi y}$ for large $y$) in the case of $\SL(2,\R)$.

\begin{definition} {\bf (Jacquet Integral)}
Let $\psi$ denote the standard character $\chi\circ C$ from Definition~\ref{def:Csum}, and recall the power functions $\Phi_\l$ and $\Phi_{v,\lambda}$ from Definition~\ref{def:powerfunction}.
For $\Re(\lambda-\rho)$ dominant the global Jacquet integral is defined as the absolutely convergent integral
\begin{equation}\label{Jacquet}
  \operatorname{Jac}_{\lambda}( g)  \ \  = \ \  \int_{ N(\A)}
   \Phi_\lambda(w_{\rm{long}} n g)\, \overline{\psi(n)} \, dn\,,\qquad\qquad (g\in G(\A)).
\end{equation}
which factors as the product of local Jacquet integrals
\begin{equation}\label{localJacquet}
  \operatorname{Jac}_{v,\lambda}( g)  \ \  = \ \  \int_{ N(\A)}
   \Phi_{v,\lambda}(w_{\rm{long}} n g)\, \overline{\psi(n)} \, dn\,,\qquad\qquad (g\in G(\Q_v)),
\end{equation}
 over all places $v\le\infty$.
 \end{definition}

 By construction,
 \begin{equation}\label{whittransforJac}
   \operatorname{Jac}_\lambda(ng) \ \ = \ \ \psi(n)\,   \operatorname{Jac}_\lambda(g),\qquad\qquad (n\in N(\A),\ g\in G(\A)),
 \end{equation}
 analogous to the classical situation in the remark following Definition~\ref{def:jacquetwhit}.  Since the integral (\ref{localJacquet}) is defined to be right-invariant under the maximal compact subgroup $K_v$ of $G(\Q_v)=N(\Q_v)T(\Q_v)K_v$, the value of $\operatorname{Jac}_{v,\lambda}(g)$ is determined by its values on $g\in T(\Q_v)$, where $T$ is the maximal torus from Section~\ref{sec:Chevalley2}.
 The local integrals are absolutely convergent for $\Re\l$ dominant.\footnote{The ranges of absolute convergence for the   global and local integrals (\ref{Jacquet})-(\ref{localJacquet}) are  precisely the same as those of the global and local Gindikin-Karpelevich integrals (\ref{GKintegral})-(\ref{GKlocal}).}  Both the global and local integrals have   meromorphic continuations in $\l$ to all of  ${\frak a}^*_\C$ \cite{JacquetThesis}.  Jacquet's integral is fundamental for Eisenstein series since it comes up directly in Langlands' constant term calculation (see Section~\ref{sec:eiscoeff}, in particular (\ref{constterm1})-(\ref{constterm2})).  However, it is not in general the most appropriate choice of Whittaker function from the point of view of automorphic forms, since it lacks several important symmetries and also can vanish identically at certain $\lambda$, unlike the normalized ``canonical'' Whittaker function we presently consider.  Before stating Theorem~\ref{thm:whitnorm}, we first give some examples which motivate it.

\subsection*{Example: Archimedean Whittaker functions for $\SL(2,\R)$.}\label{sec:sub:examplesl2R}

Here we parameterize $\lambda=\nu\alpha$, ($\nu\in\C$), so that $\langle \lambda+\rho,\alpha^\vee\rangle = 2\nu+1$.
The Jacquet integral at elements $g=\ttwo{\sqrt{y}}00{1/\sqrt{y}}$, $y>0$, of the connected component of the diagonal maximal torus $T(\R)$ can be explicitly calculated as
\begin{equation}\label{jacintSL2R}
 \int_\R \(\frac{y}{x^2+y^2}\)^{1/2+\nu}e^{-2\pi i x}\,dx \ \  = \ \  \frac{2 \pi ^{\nu +\frac{1}{2}} \sqrt{y}
   K_{\nu }(2 \pi  y)}{\Gamma \left(\nu
   +\frac{1}{2}\right)}\,,
\end{equation}
where $K_\nu=K_{-\nu}$ is the $K$-Bessel function.  Note that this formula shows already that the Jacquet integral (\ref{jacintSL2R}) can sometimes vanish identically in $y$, e.g., for $\nu\in -\f12+\Z_{\le 0}$ (these are points of reducibility for principal series representations that actually do not arise for spherical Maass forms).  This particular type of vanishing comes from the $\Gamma$-factor in the denominator, but the non-vanishing of the special function in the numerator is itself not obvious.

 Multiplying (\ref{jacintSL2R}) by $\G_\R(2\nu+1)=\pi^{-\nu-1/2}\Gamma(\nu+\frac 12)$ results in the simpler expression
  \begin{equation}\label{normalizedsl2Rwhit}
   2\sqrt{y} \,K_\nu(2\pi y)\,,
 \end{equation}
 which is invariant under the interchange $\nu\leftrightarrow-\nu$, and which has the {\it ``large-$y$''} asymptotics
  $e^{-2\pi y}$ independently of $\nu$ (in particular, it is never identically zero in $y$).
   For any $y>0$  (\ref{normalizedsl2Rwhit}) has an exact expression of the form $\pi ^{-\nu } \Gamma (\nu ) y^{\frac{1}{2}-\nu }f_\nu(y)+\pi
   ^{\nu } \Gamma (-\nu ) y^{\nu +\frac{1}{2}}f_{-\nu}(y)$,
   where
$$f_\nu(y) \ \ = \ \  _0F_1\left(1-\nu ;\pi ^2 y^2\right)$$ is an entire function of both $\nu$ and $y$ satisfying $f_\nu(0)=1$.  This directly recovers the {\it ``small-$y$''} asymptotics.\footnote{When $\nu$ is an integer one of $\pi ^{-\nu } \Gamma (\nu ) y^{\frac{1}{2}-\nu }f_\nu(y)$ or $\pi
   ^{\nu } \Gamma (-\nu ) y^{\nu +\frac{1}{2}}f_{-\nu}(y)$ may have a pole, but its asymptotics lie deeper than those of the other term, which in fact has a compensating pole for those asymptotics among its lower order terms;  the overall expression is entire and nonvanishing in $\nu$ despite the possible appearance of factors of $\log(y)$.}
    Despite the singularities of both terms at $\nu=0$, the overall expression nevertheless makes sense as the nonzero function $2\sqrt{y}\, K_0(2\pi y)$ there.
The simplicity and symmetry of these properties suggests the general archimedean normalization
\begin{equation}\label{Winfcanon}
  W_{\infty,\lambda}^{canon}(g) \ \ = \ \ \(\prod_{\alpha\in\Delta_+}
 \Gamma_\R( \langle \lambda,\alpha^\vee \rangle+1)\) \operatorname{Jac}_\lambda(g)\,,
\end{equation}
which is the normalization introduced below in  (\ref{Wcanon}) for $v=\infty$.

\subsection*{Example: Nonarchimedean Whittaker functions}

We refer to \cite[\S 9.3]{FGKP} for a recent account which emphasizes   Chevalley groups.
It is a consequence of (\ref{whittransforJac}) that $\operatorname{Jac}_{p,\lambda}(a)$ vanishes if $|a^{\a_i}|_p> 1$ for some $i\le r$ (i.e., $a$ is not dominant in $T(\Q_v)$).  Otherwise, the Casselman-Shalika formula   computes the Jacquet integral for dominant $a\in T(\Q_p)$ as
\begin{equation}\label{CassShal}
   \operatorname{Jac}_{p,\lambda}(a) \ \ = \ \ \prod_{\alpha\in\Delta_+}(1-p^{-\langle \lambda,\alpha^\vee \rangle-1})\cdot \sum_{w\in W} \prod_{\alpha\in\Delta_+}(1-p^{\langle w\lambda,\alpha^\vee \rangle})\i \Phi_{p,w\l}(a)\,.
\end{equation}
It can be shown that $\operatorname{Jac}_{p,\l}(e)=\prod\limits_{\alpha\in\Delta_+}(1-p^{-\langle \lambda,\alpha^\vee \rangle-1})$, so that the  normalization
\begin{equation}\label{Wpcanon}
\aligned
  W_{p,\lambda}^{canon}(g) \ \ & = \ \ \(\prod_{\alpha\in\Delta_+}(1-p^{-\langle \lambda,\alpha^\vee \rangle-1}) \)^{-1} \operatorname{Jac}_{p,\lambda}(a)
  \\
  & = \ \
  \sum_{w\in W} \prod_{\alpha\in\Delta_+}(1-p^{\langle w\lambda,\alpha^\vee \rangle})\i \Phi_{p,w\l}(a)
\endaligned
\end{equation}
manifestly satisfies
\begin{equation}\label{Wpcanonproperties}
  W_{p,w\lambda}^{canon}=W_{p,\lambda}^{canon}, \qquad\qquad W_{p,\lambda}^{canon}(e)=1.
\end{equation}
In particular, $W^{canon}_{p,\lambda}$ is never identically zero as a function of $\lambda$.  This last property in (\ref{Wpcanonproperties}) is crucial in the adelic setting, since it allows for the product of $W_p$ over  infinitely-many $p$ to be well-defined.
%

\begin{theorem}\label{thm:whitnorm} (Canonical normalization of Whittaker functions) With $\mathcal N_v(\lambda)$ as defined in (\ref{Nlambdav}), the function
\begin{equation}\label{Wcanon}
  W^{canon}_{v,\lambda}(g) \ \ := \ \ \mathcal N_v(\l) \operatorname{Jac}_{v,\lambda}(g) \qquad\qquad\qquad(g\in G(\Q_v))
\end{equation}
is entire in $\lambda$
and satisfies the functional equation $  W_{v,w\lambda}^{canon}=W_{v,\lambda}^{canon} $ for all $v\le \infty$, $\l\in\frak a^*_\C$, and $w\in W$.  Moreover, $W_{v,\l}^{canon}(e)=1$ for $v<\infty$ and
there exists an entire function  $f_\lambda$ of $\C^r$ with $f_\lambda(0)=1$ such that

\begin{equation}\label{Winfcanonident}
    W_{\infty,\lambda}^{canon}(a)   =  \sum_{w\in W}a^{  w\lambda+\rho} \( \prod_{\alpha\in\Delta_+}
 \Gamma_\R(- \langle w\lambda,\alpha^\vee \rangle) \)f_{w\lambda}(a^{\alpha_1},a^{\alpha_2},\ldots,a^{\alpha_r})
\end{equation}
for $a\in T(\R)^0$, the connected component of the group  $T(\R)$ of  invertible diagonal real matrices.
Finally, $W^{canon}_{v,\lambda}(\cdot)$ is never the zero function.
\end{theorem}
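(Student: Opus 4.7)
The plan is to prove the theorem place-by-place, using the Casselman--Shalika formula \eqref{CassShal} at $v=p<\infty$ and a Harish-Chandra series expansion of the Jacquet integral at $v=\infty$.

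At $v<\infty$, formula \eqref{CassShal} already presents $\operatorname{Jac}_{p,\lambda}(a)$ as $\mathcal N_p(\lambda)^{-1}$ times the manifestly $W$-symmetric sum on the right of \eqref{Wpcanon}, so multiplication by $\mathcal N_p(\lambda)$ immediately yields both the functional equation $W_{p,w\lambda}^{canon}=W_{p,\lambda}^{canon}$ (by reindexing $w$ in the sum) and the normalization $W^{canon}_{p,\lambda}(e)=1$, from which non-vanishing follows. Entireness in $\lambda$ is the classical observation that the $W$-symmetric sum in \eqref{Wpcanon} is entire: apparent poles of individual summands at $\langle w\lambda,\alpha^\vee\rangle\in 2\pi i(\log p)^{-1}\Z$ cancel in pairs against their simple-reflection partners, in Weyl-character style.

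For $v=\infty$, the main task is establishing \eqref{Winfcanonident}. Restricted to $a\in T(\R)^0$ lying in the interior of the negative Weyl chamber $\{a^{\alpha_i}<1,\,1\le i\le r\}$, $\operatorname{Jac}_{\infty,\lambda}(a)$ is an $N(\R)$-equivariant solution (in the sense of \eqref{whittransforJac}) of the holonomic differential system imposed by the $\mathcal Z(\mathfrak g_\C)$-eigenvalues of the spherical principal series with parameter $\lambda$. The local solution space is $|W|$-dimensional and spanned by Harish-Chandra series $\Phi_{\infty,w\lambda}(a)\,f_{w\lambda}(a^{\alpha_1},\ldots,a^{\alpha_r})$, with $f_\mu$ an entire power series in $\C^r$ uniquely pinned down by the recursion on its coefficients and normalized by $f_\mu(0)=1$. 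I would compute the expansion coefficients $d_w(\lambda)$ of $\operatorname{Jac}_{\infty,\lambda}$ in this basis by induction along simple roots, reducing at each step to the rank-one integral \eqref{jacintSL2R}; the direct rank-one calculation (with $\langle\lambda,\alpha^\vee\rangle=2\nu$) produces $d_w(\lambda)=\mathcal N_\infty(\lambda)^{-1}\prod_{\alpha\in\Delta_+}\Gamma_\R(-\langle w\lambda,\alpha^\vee\rangle)$, and the inductive step collapses the accumulated $\pi$- and $\Gamma$-factors via the Gamma reflection formula into this same symmetric product. Multiplying through by $\mathcal N_\infty(\lambda)$ then converts the $d_w(\lambda)$ into the $W$-symmetric product $\prod_{\alpha\in\Delta_+}\Gamma_\R(-\langle w\lambda,\alpha^\vee\rangle)$ appearing in \eqref{Winfcanonident}. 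The functional equation follows by trivial reindexing, while entireness follows because $\mathcal N_\infty$ was constructed precisely so that its poles match the zeros of $\operatorname{Jac}_{\infty,\lambda}$ (equivalently, the $\Gamma_\R$-poles in individual summands of \eqref{Winfcanonident} cancel between pairs of Weyl-related $w$'s, just as in the rank-one check at $\nu=0$).

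Finally, for non-vanishing: at finite places it is immediate from $W^{canon}_{p,\lambda}(e)=1$. At $v=\infty$, suppose $W^{canon}_{\infty,\lambda_0}\equiv 0$ on $T(\R)^0$. For $\lambda_0$ in general position the $|W|$ exponential characters $a\mapsto a^{w\lambda_0+\rho}$ are pairwise distinct on the interior of the negative chamber and hence linearly independent as functions of $a$, forcing every coefficient in \eqref{Winfcanonident} to vanish; but this is impossible because $\Gamma_\R$ is zero-free and $f_{w\lambda_0}(0)=1$. For exceptional $\lambda_0$, extend by the analyticity of $\lambda\mapsto W^{canon}_{\infty,\lambda}(g)$ already established. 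The chief obstacle in this plan is the inductive determination of $d_w(\lambda)$ at $v=\infty$: the iterated rank-one reductions generate a thicket of $\pi$- and $\Gamma$-factors, and the delicate step is repackaging them via reflection into the clean symmetric product $\prod_{\alpha\in\Delta_+}\Gamma_\R(-\langle w\lambda,\alpha^\vee\rangle)$.
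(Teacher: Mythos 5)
Your place‑by‑place structure matches the paper's, and your nonarchimedean treatment is identical (Casselman--Shalika, reindex, evaluate at $e$). At the archimedean place you deviate in the two auxiliary steps and, more seriously, leave a genuine gap in the nonvanishing.

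On the auxiliary steps: the paper does not compute the Harish--Chandra coefficients $d_w(\lambda)$ by an induction along simple roots and iterated $\Gamma$--reflection gymnastics. It cites Goodman--Wallach \cite[(7.49)]{GoodmanWallach} for the full expansion \eqref{Winfcanonident}, and then supplies only one short new calculation: that $f_\lambda(0)=1$, obtained by comparing the leading asymptotics of $\operatorname{Jac}_{\infty,\lambda}(e^{-tH})$ as $t\to\infty$ against the Gindikin--Karpelevich integral $M_{\infty,w_{\rm long}}(\lambda)$ via Dominated Convergence. Likewise, entireness of $W^{canon}_{\infty,\lambda}$ is not argued by pairwise cancellation of $\Gamma_\R$--poles between Weyl--related summands; it follows because $\operatorname{Jac}_{\infty,\lambda}$ is entire by Jacquet's thesis, $\mathcal N_\infty(\lambda)$ has neither zeros nor poles on the closed dominant chamber, and the functional equation propagates holomorphy to all of $\frak a_\C^*$. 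Your proposed route through rank-one reductions could in principle reprove Goodman--Wallach, but you flag yourself that the $\Gamma$--repackaging is a thicket, and the paper avoids it entirely.

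The real problem is the final paragraph. Your nonvanishing argument handles only generic $\lambda_0$ (distinct Weyl translates of $\lambda_0$, hence linearly independent exponentials), and then waves off the exceptional $\lambda_0$ with ``extend by the analyticity of $\lambda\mapsto W^{canon}_{\infty,\lambda}(g)$.'' This does not work: analyticity in $\lambda$ for each fixed $g$ cannot rule out that at a specific exceptional $\lambda_0$ the function $g\mapsto W^{canon}_{\infty,\lambda_0}(g)$ vanishes identically -- the witness $g$ with $W^{canon}_{\infty,\lambda}(g)\ne 0$ is allowed to escape as $\lambda\to\lambda_0$. The exceptional parameters, those with nontrivial stabilizer $W_L\subset W$, are exactly where cancellation between the Weyl-orbit terms could kill the function, and the paper explicitly identifies nonvanishing there as ``the most difficult part (and the only new contribution to this proof).'' Its actual argument deforms $\lambda=\lambda_0+\e\rho_L$, keeps only the leading exponentials coming from the coset $w_{\rm long}W_L$ (chosen by evaluating at $a=e^{-tH}$ with $\alpha_i(H)=(\alpha_i,\rho)$), factors off the singular $\G_\R$--behaviour via the regularizing functions $g_z$ in \eqref{removeGammasing}, collects the signs via \eqref{denomandsgn}, and finally reduces the nonvanishing of the leading $\e$--coefficient to the Weyl denominator formula \eqref{weyldemonforL} for $L$. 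This deformation--plus--Weyl--denominator argument is the missing piece; without it your proof establishes nonvanishing only off the root hyperplanes.
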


These properties suggest the terminology ``canonical normalization''.  Some of the archimedean statements are analogous to $p$-adic ones (e.g.,  (\ref{Winfcanonident}) formally has leading terms very similar to those appearing in   (\ref{CassShal})-(\ref{Wpcanon})). The normalizing factor $\mathcal N_v(\l)$
itself is also suggested by (\ref{CassShal}), (\ref{Wpcanon}), and (\ref{localzetafactors}).
Note that (\ref{Winfcanonident}) gives the asymptotics of $W_{\infty,\lambda}^{canon}(e^{-t H})$  for $H\in \frak a$ in the positive Weyl chamber and $t\rightarrow\infty$.  For example, if $\langle \lambda,\a^\vee\rangle\notin \Z$ for any $\alpha\in\Delta_+$ then

\begin{align}\label{Winfcanonasympt} & W_{\infty,\lambda}^{canon}(e^{-t H}) =\\
      &\hskip -30pt =\ \sum_{w\in W}e^{-t (w\lambda+\rho)(H)} \Bigg( \prod_{\alpha\in\Delta_+}
 \Gamma_\R( -\langle w\lambda,\alpha^\vee \rangle) + \text{lower order terms}\Bigg).\nonumber
\end{align}
These match the well-known small-$y$ asymptotics of (\ref{normalizedsl2Rwhit}).  The main new aspect here is the nonvanishing, which is shown using a deformation argument similar to the ones used in \cite{Miller2012} and \cite{HundleyMiller}.

\begin{proof}[Proof of Theorem~\ref{thm:whitnorm}]
The entirety of $\operatorname{Jac}_{v,\lambda}(g)$ along with the functional equation were proven by Jacquet in his thesis \cite{JacquetThesis}.  Note that the factor $\mathcal N_v(\lambda)$, which is the parenthetical expression in (\ref{Winfcanon}), is never zero nor singular for  $\Re\lambda$ dominant.
    Since the Jacquet integral is entire in $\lambda$, this shows that $W^{canon}_{\infty,\l}$ is holomorphic for $\Re\l$ in the dominant chamber, and hence on all of $\mathfrak a_\C^*$ by the functional equation.

Since the remaining assertions in the $p$-adic case are contained in the  example above, we assume  $v=\infty$ for the rest of the proof.
It remains to show  (\ref{Winfcanonident})  and the nonvanishing.
An expansion of the form (\ref{Winfcanonident}) was
established in greater generality by Goodman and Wallach  in \cite[(7.49)]{GoodmanWallach}.  In particular, they derive the condition that $f_\l(0)=1$.  Since the argument is short, we give a proof of this last fact here.   Recall that    $w_{\rm{long}}\rho=-\rho$.  It suffices to consider $\lambda$ with dominant real part, and compute the asymptotics of the Jacquet integral
 \begin{equation}\label{jacint1}
\aligned
  \operatorname{Jac}_{\infty,\lambda}(a) \ \ &  = \ \ \int_{N(\R)}
\Phi_{\infty,\l}(w_{\rm{long}}na)\,\overline{\psi(n)}\,dn \\
   &  = \ \ a^{2\rho}\int_{N(\R)}\Phi_{\infty,\l}(w_{\rm{long}}an)\,\overline{\psi(a n a\i)}\,dn \\
   &  = \ \ a^{w_{\rm{long}}\lambda+\rho}\int_{N(\R)}\Phi_{\infty,\l}(w_{\rm{long}}n)\,\overline{\psi(a n a\i)}\,dn \,,\quad (a\in T(\R)),\\
\endaligned
\end{equation}
for $a=e^{-tH}$, with $H$ in the dominant Weyl chamber of $\mathfrak a$ and $t\rightarrow\infty$.  This is because for such $\lambda$ and $H$, the main contribution to (\ref{Winfcanonident}) comes from the term with  $w=w_{\rm{long}}$,
\begin{equation}\label{maincontribut}
  f_{w_{\rm{long}}\l}(0) \  a^{w_{\rm{long}}\lambda+\rho}\ \prod_{\a\in\D_+}\G_\R(-\langle w_{\rm{long}}\l,\a^\vee\rangle)\,.
\end{equation}
Observing that $ana\i=e^{-tH}ne^{tH}$ tends to the identity matrix in the $t\rightarrow\infty$ limit and applying the Dominated Convergence Theorem, we see that the last integral in (\ref{jacint1}) tends to  the Gindikin-Karpelevich integral
$\int_{N(\R)}\Phi_{\infty,\l}(w_{\rm{long}}n)\,dn=M_{\infty,w_{\rm{long}}}(\lambda)$ as $t\rightarrow\infty$.

Thus the leading asymptotics of $W^{canon}_{\infty,\l}(a)$ as $t\rightarrow\infty$ are given by the equal expressions (\ref{maincontribut}) and
\begin{equation}\label{checkclaimmeansf0=1}
  a^{w_{\rm{long}}\lambda+\rho}\,{\mathcal N}_\infty(\l)\, M_{\infty,w_{\rm{long}}}(\lambda) \; = \;
a^{w_{\rm{long}}\lambda+\rho}\prod_{\a\in\D_+}
\G_\R(\langle \l,\a^\vee\rangle),
\end{equation}
by (\ref{GKlocal}).  Since $\langle w_{\rm{long}}\l,\a^\vee\rangle=\langle \l,w_{\rm{long}}\a^\vee\rangle$ and $w_{\rm{long}}$ maps $\Delta_+$ to $\Delta_{-}$, the two products over $\a\in\D_+$ are equal and hence $  f_{w_{\rm{long}}\l}(0)\equiv 1$ identically in $\l$.

Finally, the most difficult part (and the only new contribution to this proof) is to show that (\ref{Winfcanonident}) is not identically zero as a function of $H$.  This is obvious from the asymptotic expansion (\ref{Winfcanonasympt}) when all Weyl translates of $\lambda$ are distinct, but cancellation between terms can occur at   $\lambda=\lambda_0$ with dominant real part which are singular.  We will argue that the leading asymptotics of (\ref{Winfcanonident}) do not vanish for $a=e^{-tH}$, where $t$ is large and $H\in\frak a$ is chosen so that $\alpha_i(H)=(\a_i,\rho)$ for each $i\le r$, where $(\cdot,\cdot)$ is the inner product on the root system.  It follows from taking linear combinations that $\l(H)=(\lambda,\rho)$ for any $\lambda$ in the real span of $\Delta$, and furthermore in the complex span $\mathfrak a^*_{\C}$  of $\Delta$ (using the   algebraic extension of $(\cdot,\cdot)$ to $\C^r$).

Since $\Re\lambda_0$ is dominant, there is a   standard Levi subgroup $L$ for which
\begin{equation}\label{rootcharDL}
  \D_L \; = \; \{\a\in\D\mid \langle \lambda_0,\a^\vee\rangle=0\}
\end{equation}
 and the Weyl subgroup $W_L$ equals the stabilizer of $\l_0$ in $W$. For $w\in W$ it follows that
   $a^{w\lambda_0+\rho}=e^{-t (w\lambda_0+\rho)(H)}=e^{-t(w\lambda_0+\rho,\rho)}$.  Since $w_{\rm{long}}\rho=-\rho$, the most negative value of $(w\l_0,\rho)$ occurs for  $w=w_{\rm{long}}$, and consequently the leading terms in
 (\ref{Winfcanonident}) for $t\rightarrow \infty$ come from the coset $w_{\rm{long}}W_L$.  Since $f_{\mu}(0)=1$ for all $\mu$, and since each $a^{\a_i}=e^{-t\alpha_i(H)}=e^{-t(\alpha_i,\rho)}$ tends to zero as $t\rightarrow \infty$, it suffices to show that
 \begin{equation}\label{sufficesnonvanishing}
   \sum_{w\in W_L}e^{-t(w_{\rm{long}}w(\l_0+\mu),\rho)} \prod_{\a\in\D_+}\G_\R\big(-\langle w_{\rm{long}}w(\l_0+\mu),\a^\vee\rangle\big)
 \end{equation}
   is nonzero as $\mu$ approaches 0 along some sequence in $\frak a^*_\C$ (recalling that the holomorphy of this expression at $\mu=0$ has already been shown).

  We take $\mu$ of the form $\e\rho_L$, where $2\rho_L=\sum_{\a\in\D_L}\a$ and $\e$ is small.  Expression (\ref{sufficesnonvanishing}) simplifies to
    \begin{equation}\label{sufficesnonvanishing2}
  e^{t(\l_0,\rho)} \sum_{w\in W_L}e^{ \e t(w\rho_L,\rho)} \prod_{\a\in\D_+}\G_\R\big(\langle \l_0,\a^\vee\rangle+\e\langle w\rho_L,\a^\vee\rangle\big)\,,
 \end{equation}
 using the invariance of $(\cdot,\cdot)$ under $w_{\rm{long}}$ and
 again the fact that $w_{\rm{long}}$ maps $\D_+$ to $\D_{-}$.  For $\e$ small the argument of $\G_\R$ in (\ref{sufficesnonvanishing2}) is near one of the (finite set of) values  $z$ of $\langle \l_0,\a^\vee\rangle$, each of which has a nonnegative real part.  For each such value $z$, define
\begin{equation}\label{removeGammasing}
  g_z(s) \; := \; \left\{
                      \begin{array}{ll}
                        \Gamma_\R(z+s), & z\neq 0, \\
                        s\Gamma_\R(s), & z=0,
                      \end{array}
                    \right.
\end{equation}
each holomorphic and nonvanishing
for $s$ small, so that (\ref{sufficesnonvanishing2}) becomes
 \begin{equation}\label{sufficesnonvanishing3}
 e^{t(\l_0,\rho)}  \sum_{w\in W_L}e^{\e t(w\rho_L, \rho)} \;\,\frac{ \prod\limits_{\a\in\D_+}g_{\langle \l_0,\a^\vee\rangle}(\e\langle w\rho_L,\a^\vee\rangle)}{\prod\limits_{\a\in\D_+\cap \D_L} \e\langle w\rho_L,\a^\vee\rangle}\,,
 \end{equation}
where we have made use of (\ref{rootcharDL}).

  Let $\sgn(w)$ denote the determinant of  $w$ viewed as a linear transformation of the ambient vector space $\R^r$ containing the root system $\Delta$; it is equal to $(-1)^{\ell(w)}$, where $\ell(w)$ is the minimal length of $w$ when written as a product of simple Weyl reflections.  One has
\begin{equation}\label{denomandsgn}
  \prod_{\a\in\D_+\cap \D_L} \langle w\mu,\a^\vee\rangle \; = \;
 \sgn(w)\prod_{\a\in\D_+\cap \D_L} \langle \mu,\a^\vee\rangle
\end{equation}
for any $w\in W_L$ and $\mu\in \frak a^*_\C$:~this assertion reduces to the special case that $w$ is a simple Weyl reflection $w_\beta$, where it is follows from the fact that the   Weyl reflection for a simple root $\beta$ sends $\beta$ to $-\beta$, and permutes all other positive roots.  Thus the nonvanishing of (\ref{sufficesnonvanishing3}) at $\e=0$ -- where this expression is already known to be holomorphic --   follows from showing that
\begin{equation}\label{sufficesnonvanishing4}
 \left. \f{\partial^m}{\partial\e^m}\right|_{\e=0}\,\sum_{w\in W_L}e^{\e t(w\rho_L,\rho)}\sgn(w) \prod_{\a\in\D_+}g_{\langle \l_0,\a^\vee\rangle}(\e\langle w\rho_L,\a^\vee\rangle) \; \neq \; 0\,,
\end{equation}
where $m=\#(\Delta_+\cap \Delta_L)$.

 At this point we are close to a related nonvanishing statement from the Weyl character formula\footnote{This is not surprising, given the similarities with the $p$-adic formula (\ref{CassShal}), which is itself a Weyl character formula.} for the trivial representation.
 Recall that for any $\alpha_i\in\Sigma$, the pairing $(\alpha_i,2\rho)=(\alpha_i,\alpha_i)$; this is because $2\rho-\alpha_i$  is perpendicular to $\alpha_i$, being the sum of all positive roots other than $\alpha_i$, which is a set the simple reflection $w_{\alpha_i}$ permutes.
It follows that  $(w\rho_L,\rho)=(w\rho_L,\rho_L)$, since  $w\rho_L$ lies in the span of $\Delta_L$ and $(\alpha_i,\rho)=\f{(\a_i,\a_i)}{2}=(\alpha_i,\rho_L)$ for all $\a_i\in \Sigma\cap \D_L$.
 The nonvanishing of the left-hand side of (\ref{sufficesnonvanishing4}) in turn follows from showing the nonvanishing of its coefficient of $t^m$, which comes from differentiating the exponential $m$-times (but not the $g_{\langle \l_0,\a^\vee\rangle}(\cdot)$ functions at all).  Since the value of the product at $\e=0$ is independent of $w$, (\ref{sufficesnonvanishing4}) is implied by the corresponding nonvanishing statement
\begin{equation}\label{sufficesnonvanishing4.5}
 \left. \f{\partial^m}{\partial\e^m}\right|_{\e=0}\; \sum_{w\in W_L}e^{\e t(w\rho_L,\rho_L)}\sgn(w) \; \neq \; 0\,.
\end{equation}
%
However, this last nonvanishing is a simple consequence of the Weyl denominator formula for $L$,
\begin{equation}\label{weyldemonforL}
  \sum_{w\in W_L}e^{\e(w\rho_L,\rho_L)}\sgn(w) \; = \; \prod_{\a\in\D_+\cap \D_L}(e^{\e(\alpha,\rho_L)/2}-e^{-\e(\alpha,\rho_L)/2})\,,
\end{equation}
since the right-hand side is asymptotic to $\e^m\prod_{\a\in\D_+\cap \D_L}(\a,\rho_L)$ for small $\e$, and since this last product is positive.
\end{proof}

\section{Generic Fourier coefficients of cusp forms and Eisenstein series}\label{sec:eiscoeff}

Let $\pi=\bigotimes\limits_{v\le\infty}\pi_v$ denote a cuspidal automorphic representation for $G(\Q)\backslash G(\A)$, which we assume is spherical at all places.  Let $\phi\in\pi$ denote a nonzero vector which is spherical\footnote{We make this assumption so that we can give explicit results in terms of the canonically-normalized Whittaker functions from Section~\ref{sec:whit}.} at each place.
Let $\mu(\pi_v)\in {\frak a}^\star_\C$ denote a Satake parameter for $\pi_v$ ($v\le \infty$).  In (\ref{Wcanon}) we defined canonically-normalized Whittaker functions $W^{canon}_{v,\lambda}$ at each place, such that any other spherical Whittaker function at $v$ is a scalar multiple of $W^{canon}_{v,\lambda}$.  Likewise, all global Whittaker functions with parameters $\mu(\pi_v)$ ($v\le\infty$) are scalar multiples of the global product denoted
\begin{equation}\label{globalW}
  W^{\A,canon}_{(\mu(\pi_v))} \ \ := \ \ \prod_{v\le\infty}W^{canon}_{v,\mu(\pi_v)}.
\end{equation}
  In particular, with the standard choice of non-degenerate character $\psi=\chi\circ C$  of $N(\Q)\backslash N(\A)$ from Section~\ref{sec:Chevalley2},
\begin{equation}\label{phiFourier}
\aligned
\int\limits_{N(\Q)\backslash N(\A)} \phi(ng)\,\overline{\psi(n)}\,dn \; & = \; \widehat{\phi}\cdot \prod_{v\le \infty} W^{canon}_{v,\mu(\pi_v)}( g) \\
& = \; \widehat{\phi}\cdot W^{\A,canon}_{(\mu(\pi_v))}(g)    \qquad (g\in G(\A)),
\endaligned
\end{equation}
with a constant of proportionality
$\widehat{\phi}\in \C$  that will be referred to  as the (numerical) Fourier coefficient of $\phi$.
It is important to note that the definition of $\widehat{\phi}$ is premised on the nonvanishing of the canonical Whittaker functions from Theorem~\ref{thm:whitnorm}.

The above analysis is equally valid when $G$ is replaced by the semisimple subgroup $L_{der}$ of the standard Levi component $L$ of a standard parabolic subgroup.  Since the only automorphic representations we consider on $L$ are trivial under the center $Z_L$ of $L$, Whittaker functions on $L_{der}$ extend trivially over $Z_L$ to Whittaker functions on $L$.
In this case we
 add a superscript $L$ to the notation to indicate $W^{canon,L}_{v,\mu(\pi_v)}$ is a canonically-normalized Whittaker function for $L$, and have the generalization
\begin{equation}\label{phiFourierL}
  \int\limits_{N_L(\Q)\backslash N_L(\A)} \phi(n\ell)\,\overline{\psi(n)}\,dn \ \ = \ \ \widehat{\phi}\cdot \prod_{v\le \infty} W^{canon,L}_{v,\mu(\pi_v)}( \ell) \qquad (\ell\in L(\A))
\end{equation}
of
(\ref{phiFourier}).

If $\xi$ is any character of $N$, the standard unfolding method computes Fourier coefficients of the Eisenstein series $E(g,\phi_\l)$ from (\ref{eisdef}) in terms of the Bruhat decomposition  (\ref{bruhatdisjoint}) as follows (see \cite{Shahidi2010}):
\begin{multline}\label{unfolding1}
  \int\limits_{N(\Q)\backslash N(\A)} E(ng,\phi_\l)\,\overline{\xi(n)}\,dn \;
=  \int\limits_{N(\Q)\backslash N(\A)}
  \sum_{\gamma\in P(\Q)\backslash G(\Q)}
  \phi_\l(\gamma n g) \,\overline{\xi(n)}\,dn\\
  =   \int\limits_{N(\Q)\backslash N(\A)}
  \sum_{w\in W_L\backslash W}
  \sum_{\gamma'\in N_w(\Q)} \phi_\l(w\gamma' n g) \,\overline{\xi(n)}\,dn\,,\\
\end{multline}
where $\Re \lambda$ must be taken to be sufficiently dominant to ensure absolute convergence and we recall the definitions
\begin{equation}\label{recallNws}
  N_w \ = \ (w\i U_{-}w)\cap N \ \ \ \text{and} \ \ \ N^w \ = \ (w\i Pw)\cap N \quad(w\in W).
\end{equation}
The group $N_w$ is generated by root subgroups $u_\alpha(\cdot)$ in $N$ for which $wu_\alpha(\cdot)w^{-1} \subset U_{-}$,   $U_{-}$ being the opposite unipotent radical to $U$ (e.g., $LU_{-}$ is a parabolic containing $B_{-}$).  Similarly, the group $N^w$ is generated by the other root subgroups in $N$. Consequently $N(\A)$ is equal to the product of $N_w(\A)$ and $N^w(\A)$ (in either order).  Recall also that Haar measures for these groups were defined in the paragraph preceding the one containing (\ref{TQpowerchar}).

We assume $P\neq G$, for otherwise the sum defining (\ref{eisdef}) is trivial and (\ref{unfolding1}) reverts to (\ref{phiFourier}). The integration in (\ref{unfolding1}) can be rewritten as
\begin{multline}\label{unfolding2}
  \int\limits_{N(\Q)\backslash N(\A)} E(ng,\phi_\l)\,\overline{\xi(n)}\,dn\\
  = \hskip-2pt\sum_{w\in W_L\backslash W} \;
   \int\limits_{N_w(\Q)\backslash N_w(\A)}\;  \int\limits_{N^w(\Q)\backslash N^w(\A)}
  \sum_{\gamma'\in N_w(\Q)} \hskip-5pt \phi_\l\big(w\gamma' n_1 n_2 g\big) \,\overline{\xi(n_1n_2)}\,dn_2\,dn_1\\
  =  \ \ \sum_{w\in W_L\backslash W} \, \int\limits_{N_w(\A)} \;  \int\limits_{N^w(\Q)\backslash N^w(\A)}
  \phi_\l\big(wn_1n_2g\big)\,\overline{\xi(n_2)}\,dn_2\,\overline{\xi(n_1)}\,dn_1
  \\
  =  \ \ \sum_{w\in W_L\backslash W} \, \int\limits_{N_w(\A)} \;  \int\limits_{N^w(\Q)\backslash N^w(\A)}
  \phi_\l\big(wn_2n_1g\big)\,\overline{\xi(n_2)}\,dn_2\,\overline{\xi(n_1)}\,dn_1\,,
\end{multline}
where the sum over $\g'$ has been unfolded to enlarge the integration over $n_1$, and in the last step we changed the order of $n_1$ and $n_2$ inside the argument of $\phi_\l$.  Although these elements of the nilpotent group $N$  do not in general commute,      their commutators can be absorbed in a sequence of subgroups in $N$'s central series, and this change of variables is justified.  Of course these calculations are directly valid only for $\Re\l$ sufficiently dominant, but   extend by meromorphic continuation to $\l \in {\frak a}_\C^*$.

We now consider some examples and applications of (\ref{unfolding2}).

\subsection*{Constant term of the Borel Eisenstein series and the Gindikin-Karpelevich integral}

Here we specialize  $P=B$, $\phi_\l=\Phi_\l$, and $\xi$ to be trivial, so that  (\ref{unfolding2}) recovers Langlands' constant term formula as a sum over $w\in W$ of Gindikin-Karpelevich integrals.  In more detail,
\begin{multline}\label{constterm1}
  \int\limits_{N(\Q)\backslash N(\A)} E(ng,\Phi_\l)\,dn =
    \sum_{w\in  W} \int\limits_{N_w(\A)}\;  \int\limits_{N^w (\Q)\backslash N^w(\A)}
   \Phi_\lambda(wn_2n_1g)\, dn_2\,dn_1\,.
\end{multline}
Since  $wn_2w^{-1}\in N$
 and $\Phi_\lambda$ is left-invariant under $N$, the integrand is independent of $n_2$.  Since we have normalized the measure so that $ N^w(\Q)\backslash N^w(\A)$ has volume 1,
\begin{equation}\label{constterm2}
\aligned
  \int_{N(\Q)\backslash N(\A)} E(ng,\Phi_\l)\,dn \; & = \;
    \sum_{w\in   W} \int\limits_{N_w(\A)}   \Phi_\lambda(wn_1g)\,dn_1 \\ & = \;
   \sum_{w\in  W}\prod_{\alpha\in\Delta_+\cap w^{-1}\Delta_{-}}
  c(\langle \lambda,\alpha^\vee\rangle)\,\Phi_{w\lambda}(g),
\endaligned
\end{equation}
using the Gindikin-Karpelevich integral $M_{w\i}(\lambda)$ from (\ref{GKintegralwithPhi}).

\subsection*{Generic coefficients of cuspidally-induced Eisenstein series}

We now resume from (\ref{unfolding2}).  In this case we now take $\xi=\psi=\chi\circ C$ to be the standard generic character from (\ref{def:psichar}).

\begin{theorem}\label{thm:templategeneral}
With the above assumptions,
the generic Fourier coefficient of the Eisenstein series $E(\cdot,\phi_\l)$ with respect to the standard character $\psi$ from Definition~\ref{def:psichar} is
\begin{multline}\label{templateglobal}
    \int\limits_{N(\Q)\backslash N(\A)} E(ng,\phi_\l)\,\overline{\psi(n)}\,dn     =
    \widehat{\phi} \cdot
    \frac{W^{\A,canon}_{(\mu_v)}(g)}{\prod\limits_{\alpha\in\Delta_U}\prod\limits_{v\le\infty}\zeta_v(\langle \mu_v,\alpha^\vee \rangle +1)},
\end{multline}
where $\widehat\phi$ is the (numerical) Fourier coefficient from \eqref{phiFourier}, $W^{\A,canon}_{(\mu_v)}(g)$ is defined in \eqref{globalW}, and the $\mu_v$ are as defined in (\ref{bigSatake}).
\end{theorem}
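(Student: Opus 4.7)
The plan is to start from the unfolded expression (\ref{unfolding2}) and carry out two reductions. Cuspidality of $\phi$ will kill all but one coset $W_L w_0$ in the sum over $W_L\backslash W$, and for the surviving term a Fubini splitting against Jacquet's integral (\ref{Jacquet}) for $G$ itself will recover the claimed formula.

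For the vanishing, I would fix a minimal-length coset representative $w$ and $n_2\in N^w$; then $wn_2w\i \in P(\A) = L(\A)U(\A)$ decomposes as $u\cdot \ell$ with $u\in U(\A)$ and $\ell\in L(\A)$. Since $L$ normalizes $U$ and $\phi_\l$ is left-invariant under $U(\A)$ (a direct consequence of the definitions of $\phi_\l$ and $\ell(\cdot)$), the integrand simplifies to $\phi_\l(\ell\cdot wn_1 g)\,\overline{\psi(n_2 n_1)}$. The change of variable $n_2\mapsto \ell$ converts the inner integration over $N^w(\Q)\backslash N^w(\A)$ into an integration of $\phi$ over $N_L(\Q)\backslash N_L(\A)$ against the character $\psi^w:\ell\mapsto\psi(n_2(\ell))$ obtained by transport via $\mathrm{Ad}(w\i)$. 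Cuspidality forces this Fourier coefficient to vanish unless $\psi^w$ is nondegenerate on $N_L$, which a root-system computation shows holds for a unique coset $W_L w_0$: the one characterized by $w_0\i$ sending the simple roots $\Sigma\cap\D_L$ of $L$ bijectively into the simple roots $\Sigma$ of $G$.

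For the surviving $w_0$-term, the bracketed $N_L$-integral equals $\widehat\phi\cdot W^{\A,canon,L}_{(\mu(\pi_v))}(\ell(w_0 n_1 g))$ by (\ref{phiFourierL}) (the character-matching above ensures that $\psi^{w_0}$ on $N_L$ agrees with the standard $\psi|_{N_L}$ up to an inner automorphism absorbable into $g$). The surviving contribution thus equals $\widehat\phi$ times an outer integral over $N_{w_0}(\A)$ of $W^{\A,canon,L}_{(\mu(\pi_v))}(\ell(w_0 n_1 g))\,\Phi_{\l-\rho_L}(w_0 n_1 g)\,\overline{\psi(n_1)}$. To identify this with the right-hand side of (\ref{templateglobal}), I would apply the identical Fubini decomposition $N(\A)=N_L(\A)\cdot U(\A)$ directly to the definition of $\operatorname{Jac}^G_\l(g)$ in (\ref{Jacquet}): the inner $N_L$-integral is then $\operatorname{Jac}^L_{(\mu_v)}(\ell(w_0 n_1 g))$, so multiplying by the $L$-analog $\mathcal N^L$ of (\ref{Nroots}) converts it to $W^{\A,canon,L}_{(\mu_v)}(\ell(w_0 n_1 g))$ via (\ref{Wcanon}). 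Combining the two calculations, using $\mathcal N/\mathcal N^L = \prod_{\a\in\D_U}\prod_v\zeta_v(\langle\mu_v,\a^\vee\rangle+1)$, and invoking $W^{\A,canon}_{(\mu_v)} = \mathcal N\cdot \operatorname{Jac}^G_\l$, yields (\ref{templateglobal}).

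The principal obstacle will be the character-matching in Step 2: one must verify that $\psi^{w_0}$ coincides with $\psi|_{N_L}$ (up to an inner automorphism), not merely that it is nondegenerate, so that the Fourier coefficient is literally the one defining $\widehat\phi$. This reduces to the root-theoretic fact that $w_0\i$ carries $\Sigma\cap \D_L$ into $\Sigma$, which follows from the standard characterization of minimal-length representatives in $W_L\backslash W$. A secondary technical point is justifying the Fubini splittings and the interchange of integrations, handled in the Godement range of $\Re\l$ where all integrals converge absolutely, and then extended to all $\l\in\frak a^*_\C$ by meromorphic continuation.
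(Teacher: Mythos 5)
Your overall architecture matches the paper's: unfold via the Bruhat decomposition (\ref{unfolding2}), argue that only one coset of $W_L\backslash W$ survives, and then Fubini the surviving $N$-integral against the Jacquet integrals for $L$ and $G$ to extract the ratio $\mathcal N^L/\mathcal N$.  Your last paragraph (convert the $N_L$-coefficient to a canonical $L$-Whittaker function via (\ref{phiFourierL}), then re-assemble a Jacquet integral for $G$, and invoke (\ref{Wcanon}) for both groups) essentially coincides with the paper's steps (\ref{cuspcoeff1})--(\ref{cuspcoeff5}).  But your vanishing argument for the extraneous cosets has a genuine gap, and it uses the wrong mechanism.  The paper never invokes cuspidality to kill these terms.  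It proves the purely root-theoretic statement that for every $w\notin W_L w_{\rm{long}}$ there exists a simple root $\alpha\in\Sigma$ with $w\alpha\in\Delta_U$; then $u_\alpha(\cdot)\subset N^w$ conjugates by $w$ into $U(\A)$, where $\phi_\lambda$ is left-invariant while $\psi$ is nontrivial on $u_\alpha$, so the $n_2$-integral is annihilated on character-theoretic grounds alone.

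Your substitute argument — transport $\psi$ to a character $\psi^w$ of $N_L$ and appeal to cuspidality when $\psi^w$ is degenerate — fails on two counts.  First, the ``change of variable $n_2\mapsto\ell$'' is not a bijection onto $N_L$: the group $N^w$ splits into a part conjugating into $N_L$ and a part conjugating into $U$, and $\psi$ can be nontrivial on the latter; that $U$-part integration, which you have suppressed, is precisely where the vanishing happens.  Second, your root-system criterion does not single out a unique coset.  Take $G=\SL(3)$, $P=P_{2,1}$, $\Sigma_L=\{\alpha_1\}$: both $w=e$ and $w_0=w_{\rm{long},L}w_{\rm{long}}$ send $\Sigma_L$ into $\Sigma$, so $\psi^w|_{N_L}$ is the standard nondegenerate character for both.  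Yet the $w=e$ term vanishes — not from cuspidality at all, but because $\psi$ is nontrivial on $u_{\alpha_2}\subset N^e\cap U$.  Finally, the blanket claim that cuspidality plus genericity forces degenerate Fourier coefficients of $\phi$ along $N_L$ to vanish is a $\GL(n)$-specific fact (Piatetski-Shapiro/Shalika) and is not available for a generic cusp form on an arbitrary Levi $L$ of an arbitrary Chevalley group; the paper's $U$-invariance argument deliberately avoids needing anything of the sort.
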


Our contribution  is the archimedean theory, which allows for a precise global statement (\cite[Chapter 7]{Shahidi2010} gives a global formula, but with explicit formulas only for the nonarchimedean contributions; it is also possible to deduce Theorem~\ref{thm:templategeneral} from \cite{Shahidispaper} in terms of ratios of functionals for the automorphic representations attached to $E(\cdot,\phi_\lambda)$ and $\phi$).  The archimedean theory introduced here, in turn, rests upon (\ref{Wpcanonproperties}), which was also previously known in the nonarchimedean cases (but not in the archimedean case).  A key property of our treatment is that the canonically normalized Whittaker function $W^{\A,canon}_{(\mu_v)}$ is never identically zero.

\vskip 10pt
The denominator  in (\ref{templateglobal}) can be made explicit using (\ref{bigSatakepairing}) and (\ref{localzetafactors}).
Note that in the special case $P=B$ one has $L=T$, and $\D_U=\D_+$, so that the product in (\ref{cuspcoeff5}) is over all positive roots.
This is the heart of the template method described in the introduction:~the Fourier coefficient of the Eisenstein series equals that of the inducing cusp form, times local factors mimicking those of the Borel Eisenstein series (but notably omitting parts coming from the Levi $L$).  We give several examples in Section~\ref{sec:examplesoftemplatemethod}.

\begin{proof}
 We claim that the summands in (\ref{unfolding2}) for
\begin{equation}\label{wnotin}
 w \; \notin  \; W_L\, w_{\rm{long}}
\end{equation}
vanish identically.  Indeed, this condition is equivalent to $w w_{\rm{long}}\notin W_L$, and since $w_{\rm{long}}\D_{-}=\D_{+}$, the $w$ in (\ref{wnotin}) are precisely
those  for which $w\Delta_{+}$ is not contained in the $W_L$-orbit of $\Delta_{-}$.  In particular, for these $w$ there must exist some $\alpha\in\Delta_+$ (in fact, in $\Sigma$) such that $w\alpha$ is a root in $U$; necessarily, $u_\alpha(\cdot)$ is contained in $N^w$.  The one-parameter subgroup   $\{w\alpha(t)w^{-1}|t\in\R\}$ lies in $U(\R)$ and hence  $\phi_\l$ is independent of $t$, while the non-degenerate character $\xi$ is not.  Hence the integral over $n_2$ in (\ref{unfolding2}) vanishes unless $w\in W_Lw_{\rm{long}}$, as claimed.

Therefore,  the final line in (\ref{unfolding2}) involves only one term, for $w=w_{\rm{long}}$.  Letting $\widetilde{L}$ denote the standard Levi subgroup conjugate to $L$ by $w_{\rm{long}}$, we calculate that $N^{w_{\rm{long}}}$ is equal to $N_{\widetilde L}:=N\cap {\widetilde L}$.  Note that $N_L:=N\cap L$ is not conjugate to $N_{\widetilde L}$ by $w_{\rm{long}}$, since $w_{\rm{long}}$ conjugates $N$ to $N_{-}$; instead $N_{\widetilde L}=w_{\rm{long}}\i w_{\rm{long},L}\i N_L w_{\rm{long},L} w_{\rm{long}}$.

Inserting into (\ref{unfolding2}), we find
\begin{multline}\label{cuspcoeff1}
  \int\limits_{N(\Q)\backslash N(\A)} E(ng,\phi_\l)\,\overline{\xi(n)}\,dn
  \\  =  \int\limits_{N_{w_{\rm{long}}}(\A)} \;  \int\limits_{N_{\widetilde L}(\Q)\backslash N_{\widetilde L}(\A)}
  \phi_\l\Big(w_{\rm{long}}n_2n_1g\Big)\,\overline{\xi(n_2)}\,dn_2\,\overline{\xi(n_1)}\,dn_1 \\ =
   \int\limits_{N_{w_{\rm{long}}}(\A)}  \;  \int\limits_{N_L(\Q)\backslash N_L(\A)}
  \phi_\l\Big(n_2w_{\rm{long},L}w_{\rm{long}}n_1g\Big)\,\overline{\xi(n_2)}\,dn_2\,\overline{\xi(n_1)}\,dn_1\,,
\end{multline}
where we have used the left-invariance of $\phi_\l$ under $w_{\rm{long},L}\in W_L$.

By (\ref{philambdatwistdef}),
\begin{align}\label{philphiunravel}
   \phi_\l\Big(n_2w_{\rm{long},L}w_{\rm{long}} n_1g\Big) & =
   \phi\Big(\ell\big(n_2w_{\rm{long},L}w_{\rm{long}}  n_1g\big)\Big)\\
   &
   \hskip 36pt
   \cdot \Phi_{\l-\rho_L}\Big(n_2w_{\rm{long},L}w_{\rm{long}} n_1g\Big)
  \nonumber \\
   &
   \hskip -72pt =
   \phi\Big(\ell(n_2)\ell\big(w_{\rm{long},L}w_{\rm{long}}  n_1g\big)\Big)
   \cdot   \Phi_{\l-\rho_L}\Big(w_{\rm{long},L}w_{\rm{long}} n_1g\Big),\nonumber
\end{align}
where $\ell$ denotes an Iwasawa $L$-factor.  In particular, the argument of $\Phi_{\l-\rho_L}(\cdot)$ here is independent of $n_2$.  We now apply (\ref{phiFourierL}), which in this case reads
\begin{multline}\label{cuspcoeff2}
\int\limits_{N_L(\Q)\backslash N_L(\A)}
 \phi\Big(\ell(n_2)\ell(w_{\rm{long},L}w_{\rm{long}}  n_1g)\Big)
 \,\overline{\xi(n_2)}\,dn_2 \ \  \\ = \;\widehat{\phi} \cdot \prod_{v\le \infty} W^{canon,L}_{v,\mu(\pi_v)}\Big(\ell(w_{\rm{long},L}w_{\rm{long}} n_1 g)\Big)\,,
\end{multline}
 to   (\ref{cuspcoeff1}) and obtain

\begin{align}\label{cuspcoeff3new}
  \int\limits_{N(\Q)\backslash N(\A)}\hskip -14pt  E(ng,\phi_\l)\,\overline{\xi(n)}\,dn  & \; = \;
 \widehat{\phi}\,\cdot\hskip-14pt  \int\limits_{N_{w_{\rm{long}}}(\A)}\hskip-3pt\prod_{v\le \infty} W^{canon,L}_{v,\mu(\pi_v)}\Big(\ell\big(w_{\rm{long},L}w_{\rm{long}} n_1 g\big)\Big)\\
 &
 \hskip 43pt
 \cdot  \Phi_{\l-\rho_L}\Big(w_{\rm{long},L}w_{\rm{long}} n_1g\Big)
  \,\overline{\xi(n_1)}\,dn_1\,.
\nonumber\end{align}
It is helpful to recall that $\widehat{\phi}$ depends on the overall normalization of $\phi$.  For example, two widely used normalizations are the $L^2$ normalization and the Hecke normalization, which differ by a factor related to the special value of the adjoint $L$-function at $s=1$ (see Proposition~\ref{PropFirstCoeff} for a more precise statement).

%
%
Now we go in a reverse direction by applying (\ref{Wcanon}) (for the group $L$ instead of $G$), which gives a different integral formula for $W^{canon,L}_{v,\mu(\pi_v)}$:
\begin{align}\label{cuspcoeff4a}
  & \int\limits_{N(\Q)\backslash N(\A)}  E(ng,\phi_\l)\,\overline{\xi(n)}\,dn
    =
  \widehat{\phi} \,\cdot
  \prod_{v\le\infty}\left(
   {\mathcal N_v}^L\big(\mu(\pi_v)\big)
   \int\limits_{N_{w_{\rm{long}}}(\Q_v)}\; \int\limits_{N_L(\Q_v)}\right.\\
   &
  \hskip 80pt \cdot
    \Phi_{\mu(\pi_v)+\rho_L-\rho} \Big(w_{\rm{long},L}n_2\ell\big(  w_{\rm{long},L} w_{\rm{long}} n_1  g\big)\Big) \ \nonumber\\  &
    \hskip 68pt
    \left. \phantom{ \int\limits_{N_{w_{\rm{long}}}(\Q_v)}}
   \cdot
    \Phi_{\l-\rho_L}\Big(w_{\rm{long},L}w_{\rm{long}} n_1g\Big)
   \,\overline{\xi(n_1)} \,\overline{\xi(n_2)}\,dn_2\,dn_1
  \right),
  \nonumber
\end{align}
where the shift $\rho_L-\rho$ is needed to account for the different normalizations in Definition~\ref{def:powerfunction} for the different groups $G$ and $L$.
By construction, $\Phi_{\l-\rho_L}$ is invariant on the left under  $L_{\text{der}}(\A)$, in particular under $\ell(w_{\text{long,L}})=w_{\text{long,L}}$.  Writing $\widetilde{n}_2=w_{\text{long,L}}n_2w_{\text{long,L}}\i\in L_{\text{der}}(\A)$, the integrand can be rewritten as
\begin{equation}\label{rewriteintegrand}
  \Phi_{\mu(\pi_v)+\rho_L-\rho}\Big (\widetilde{n}_2\ell\big(   w_{\rm{long}} n_1  g\big)\Big)\,
    \Phi_{\l-\rho_L}\Big(\widetilde{n}_2 w_{\rm{long}} n_1g\Big)\,\overline{\xi(n_1n_2)}\,,
\end{equation}
where we have again appealed to the invariance of $\Phi_{\l-\rho_L}$ under  $L_{\text{der}}(\A)$.
We may also remove the map $\ell(\cdot)$ from the argument of $\Phi_{\mu(\pi_v)+\rho_L-\rho}$, because the latter function is right-invariant under all $K_v$, and so the only relevant difference between $h\in G(\A)$ and its Iwasawa factor $\ell(h)$ here is multiplication on the left by an element of $U$, a subgroup which $L$-normalizes and under which $\Phi_{\mu(\pi_v)+\rho_L-\rho}$ is left-invariant.

 In particular, (\ref{rewriteintegrand}) becomes $\Phi_{\mu_v}\Big(\widetilde{n}_2\ell\big(   w_{\rm{long}} n_1  g\big)\Big)\overline{\xi(n_1n_2)}$  using  (\ref{bigSatake}), and
  the double integral over $n_1$ and $n_2$ itself becomes a Jacquet integral for $G$:
\begin{align}\label{cuspcoeff4b}
   \int\limits_{N(\Q)\backslash N(\A)}  E(ng,\phi_\l)\,\overline{\xi(n)}\,dn \ \ &
   = \ \
  \widehat{\phi} \cdot
  \prod_{v\le\infty}\(
   {\mathcal N_v}^L(\mu(\pi_v))
    \operatorname{Jac}_{v,\mu_v}(g)\)
  \\ \nonumber & \hskip -60pt
     = \ \
  \widehat{\phi} \cdot
  \prod_{v\le\infty}\(
   {\mathcal N_v}^L(\mu(\pi_v)) \cdot \mathcal N_v(\mu_v)\i\cdot
  W^{canon}_{v,\mu_v}(g)\),
\end{align}
once more  applying (\ref{Wcanon}).
Now use (\ref{Nlambdav}) to write
\begin{equation}\label{cuspcoeff5}
  \frac{\mathcal N_v^L(\mu(\pi_v))}{\mathcal N_v(\mu_v)} \ \ = \ \ \prod_{\alpha\in\Delta_U}\zeta_v(\langle \mu_v,\alpha^\vee \rangle +1)\i
\end{equation}
as a product over the roots in $U$ (here we have used (\ref{bigSatakepairing}) to cancel the factors for $\a\in\D_L$, which have $c_i(\a^\vee)=0$ for $\a_i\in \Sigma^L$).
\end{proof}

\section{Examples of the template method}\label{sec:examplesoftemplatemethod}

Formula (\ref{cuspcoeff5}) is the theoretical basis of the template method from the introduction, as it includes the Euler factors for all positive roots aside from those coming from the Levi (see {\bf Step 3}).
When $P$ is a maximal parabolic, the product of (\ref{cuspcoeff5}) over  $v\le \infty$ can be written as follows.  The lie algebra $\frak u$ decomposes as the (finite) direct sum $\frak u={\frak u}_1\oplus {\frak u}_2\oplus\cdots$, where $\frak u_j$ is the span of all root vectors $X_\a$ for which $\langle \varpi_P,\a\rangle=j$, with $\varpi_P$ denoting the fundamental weight corresponding to the unique simple root in $\Sigma^L$.  Each $\frak u_j$ is an irreducible representation $\rho_j$ for the adjoint action of
$L$.
In this context (\ref{bigSatake}) is written as $\mu_v=s\varpi_P+\mu(\pi_v)$.
Then (\ref{cuspcoeff5}) becomes
\begin{equation}\label{cuspcoeff5formaximalparabolic}
 \frac{\mathcal N_v^L(\mu(\pi_v))}{\mathcal N_v(\mu_v)}  \ \ = \ \
  \prod_{j\ge 1} L^*(js+1,\pi,\rho_j)\i\,,
\end{equation}
the product of completed Langlands $L$-functions (see \cite[(7.1.20)]{Shahidi2010}).  By comparison, the coefficients in the constant term formula involve the ratio  $\prod\limits_{j\ge 1} \f{L^*(js,\pi,\rho_j)}{L^*(js+1,\pi,\rho_j)}$.

In this section we give some examples of (\ref{cuspcoeff5}) and (\ref{cuspcoeff5formaximalparabolic}).
All our examples are for simply-laced groups, so we adopt the simplifying  normalization that all roots have  length 2, so that roots coincide with coroots.  We frequently use the explicit notation from the beginning of Section~\ref{sec:Chevalley1} and end of Section~\ref{sec:Chevalley2} for the underlying root systems for $\SL(n)$.

\subsection*{Example: Borel Eisenstein series, with emphasis on $G=\SL(3)$}

In this case $P=B$ and $L=T$.  In particular $\Sigma^L=\Sigma$ and $\Delta_L$ is empty.  Here only the sum on the right-hand side of (\ref{bigSatake}) contributes ($\pi_v$ is trivial since it is spherical and $L$ is a torus).  Since we have arranged that roots coincide with coroots, we find
$$c_1(\alpha_1)=c_2(\alpha_2)=1, \ c_2(\alpha_1)= c_1(\alpha_2)=0,  \ c_1(\alpha_1+\alpha_2)= c_2(\alpha_1+\a_2)=1$$
in the notation of (\ref{corootassum}).  Then
$$\mu_v = s_1\varpi_1+s_2\varpi_2,$$ where we write $s_j=s_{\a_j}$ and $\varpi_j=\varpi_{\a_j}$ for shorthand.  We compute

\begin{equation}\label{borelSLn}
  \langle \mu_v,\alpha^\vee_1\rangle  =  s_1 \,,\quad
  \langle \mu_v,\alpha^\vee_2\rangle  =  s_2 \,,\quad\text{and}\ \
  \langle \mu_v,(\a_1+\alpha_2)^\vee \rangle  =  s_1+s_2 \,.
\end{equation}
In terms of matrices, $\mu_v=\diag(s_1+s_2,s_2,0)-\frac{s_1+2s_2}{3}\diag(1,1,1)$, where the second matrix is needed simply to ensure the entries sum to zero.
Thus (\ref{borelSLn})  computes the inner product of $\mu_v$ against the  positive roots, which in the notation of the example at the end of Section~\ref{sec:Chevalley2} are $(1,-1,0)$, $(0,1,-1)$, and $(1,0,-1)$, respectively.  This matches the formula in part 1) of Theorem~\ref{thm:inintro}.

\subsection*{Example: Maximal parabolic Eisenstein series for $G=\SL(3)$}

Here we take the (2,1) parabolic, so that $\D_L=\{\a_1,-\a_1\}$, and consider a cusp form $\pi$ for $\SL(2)$ with Satake parameters $\mu(\pi_v)=it_v\alpha_1$.  (The Satake parameter in terms of matrices can  be thought of as $\diag(it_v,-it_v)\in {\frak{sl}}_2(\C)$, and that matrix is $it_v$ times the diagonal matrix $\diag(1,-1)$  corresponding to the simple root of $\SL(2)$.\footnote{$\Re t_v =0$ if $\pi$ is tempered at $v$, which we expect to always be the case by the generalized Ramanujan conjectures.})     Since $\Sigma^L=\{\alpha_2\}$ consists of   the unique simple root not in the Levi, (\ref{bigSatake}) in this case  specializes to
$$\mu_v \ \ = \ \ s\varpi_2 \ + \ it_v\alpha_1,$$
where $s=s_{\a_2}$ and $\varpi_2=\varpi_{\a_2}$.
In terms of diagonal matrices, this corresponds to $\diag(s+it_v-\f{2s}{3},s-it_v-\f{2s}{3},-\f{2s}{3})$.

 In this example $\D_U=\{\a_2,\a_1+\a_2\}$ consists of the roots in the unipotent radical $U$ of the (2,1) parabolic.  In  the notation of Section~\ref{sec:Chevalley1}, the corresponding linear functionals are $\a_2(H)=h_2-h_3$ and $(\a_1+\a_2)(H)=h_1-h_3$, which correspond to the (1,3) and (2,3) entries of a matrix, respectively -- exactly the upper triangular positions not in the (2,2) block corresponding to the Levi.
We compute
\begin{equation}\label{21parabolic}
\aligned
  \langle \mu_v,\a_2^\vee\rangle \  & = \  s-it_v \quad \text{and} \\
   \langle \mu_v,(\a_1+\a_2)^\vee\rangle \  & = \  s+it_v\langle \a_1,(\a_1+\a_2)^\vee \rangle \ = \ s+it_v\,;
\endaligned
\end{equation}
again, these are the inner products of $(s+it_v,s-it_v,0)$ with $(0,1,-1)$ and $(1,0,-1)$, respectively.
Thus (\ref{cuspcoeff5}) is $L^*(s+1,\pi)\i$, verifying (\ref{cuspcoeff5formaximalparabolic}) and part 2) of Theorem~\ref{thm:inintro}.

\subsection*{Example: (2,2) parabolic for $G=\SL(4)$}

Here we set $\Sigma=\{\a_1,\a_2,\a_3\}$,
\begin{align*}
& \D_+  =\{\a_1,\  \a_2, \ \a_3, \ \a_1+\a_2, \  \a_2+\a_3, \  \a_1+\a_2+\a_3\}\,,\\  & \Delta_L=\{\pm\a_1,\pm\a_3\}, \\
&
\D_U=\{\a_2, \ \a_1+\a_2, \ \a_2+\a_3, \ \a_1+\a_2+\a_3\}.
\end{align*}
In other words, the simple roots $\a_1$ and $\a_3$ correspond to upper triangular entries in the respective $2\times 2$ blocks of the Levi, and $\D_U$ consists of the other four positive roots.

The cusp form $\pi$, without loss of generality, has the form $\pi=\pi'\boxplus\pi''$, where $\pi'$ and $\pi''$ are everywhere unramified, spherical cuspidal automorphic representations of $PGL_2$ (each is a cusp form for one of the two block matrix factors in the Levi $L$).
We thus write
$$\mu_v \; = \; s\varpi_2  + it_v'\a_1  + it_v''\a_3$$
in analogy with the maximal parabolic Eisenstein series for $\SL(3)$ just considered.
In the notation of the example at the end of Section~\ref{sec:Chevalley2}, $\mu_v$ corresponds to $(s+it'_v,s-it'_v,it''_v,-it''_v)$.
Then
\begin{equation}
\aligned
  \langle \mu_v,\a_2^\vee\rangle &  \; = \;  s \,- \, it'_v \, - \, it''_v\, , \\
  \langle \mu_v,(\a_1+\a_2)^\vee\rangle &  \; = \; s\, +\, it'_v\, -\, it''_v\,, \\
  \langle \mu_v,(\a_2+\a_3)^\vee\rangle &  \; = \;  s\, -\, it'_v\, +\, it''_v\,,\\
  \langle \mu_v,(\a_1+\a_2+\a_3)^\vee\rangle & \; = \;  s\, + \, it'_v\,+\, it''_v\,,
\endaligned
\end{equation}
which are (respectively) the inner products of $(s+it'_v,s-it'_v,it''_v,-it''_v)$ with $(0,1,-1,0)$, $(1,0,-1,0)$, $(0,1,0,-1)$, and $(1,0,0,-1)$.
Thus (\ref{cuspcoeff5formaximalparabolic}) is $L^*(s+1,\pi'\otimes\pi'')\i$.

\subsection*{Example: (2,1,1) parabolic for $\SL(4)$}

Here $\Delta_L=\{\pm\a_1\}$, and $\D_U=\{\a_2,\a_1+\a_2,\a_2+\a_3,\a_1+\a_2+\a_3,\a_3\}$, which correspond to the 5 upper triangular entries in a $4\times 4$ matrix which are not already in the upper $2\times 2$ block of the Levi.

Formula (\ref{cuspcoeff5formaximalparabolic}) does not apply since $P$ is not a maximal parabolic; instead we must return to (\ref{cuspcoeff5}).
Then with the same notation for $\mu(\pi_v)=it\a_1$ as above and writing $s_2=s_{\a_2}$ and $s_3=s_{\a_3}$, from (\ref{bigSatake}) we find
$$\mu_v \; = \;  s_2\varpi_2  +  s_3\varpi_3  +  it\a_1.$$
In terms of the notation from the end of Section~\ref{sec:Chevalley2}, this equals    $$(s_2+s_3+it-\bar{s},s_2+s_3-it-\bar{s},s_3-\bar{s},-\bar{s}), \qquad \bar{s}=\f{2s_2+3s_3}{4}. $$
We compute
\begin{equation}
\aligned
  \langle \mu_v,\a_2^\vee\rangle &  \; = \; s_2 \, - \, it\,, \\
  \langle \mu_v,(\a_1+\a_2)^\vee\rangle &  \; = \;  s_2 \, + \, it \, ,\\
  \langle \mu_v,\a_3^\vee\rangle & \; = \;  s_3\,, \\
  \langle \mu_v,(\a_2+\a_3)^\vee\rangle &  \; = \;  s_2 \, + \, s_3 \ - \ it \, ,\\
  \langle \mu_v,(\a_1+\a_2+\a_3)^\vee\rangle & \; = \;  s_2 \, + \, s_3 \ + \ it \, ,
\endaligned
\end{equation}
which are (respectively) the inner products of the previous displayed vector
with the vectors
 \[ (0,1,-1,0),\ (1,0,-1,0),\ (0,0,1,-1),\ (0,1,0,-1) \mbox{ and }(1,0,0,-1). \]
Thus (\ref{cuspcoeff5}) is $L^*(s_2+1,\pi)\i L^*(s_2+s_3+1,\pi)\i \zeta^*(s_3+1)\i$.

\subsection*{Example: (3,1) parabolic for $\SL(4)$}

Here
$$\Delta_L=\{\pm\a_1,\pm\a_2,\pm(\a_1+\a_2)\}$$
 and $\D_U=\{\a_3,\a_2+\a_3,\a_1+\a_2+\a_3\}$ (these are the three roots sending $H\mapsto h_i-h_4$, with $i=1,2,3$).
Satake parameters $\pi_v$ for $PGL_3$ are often listed in the natural $\GL_3$ coordinates as $(it_1,it_2,it_3)$ with $t_1+t_2+t_3=0$.  This convention has the property that  $\mu(\pi_v)$'s inner product  with the first simple root is $i(t_1-t_2)$, while  its inner product with the second simple root is $i(t_2-t_3)$.  Since we have normalized $\langle \a_1,\a_2\rangle = \langle \a_2,\a_2\rangle =2$ and $\langle \a_1,\a_2\rangle =-1$, we can thus write
$\mu(\pi_v) = it_1\a_1-it_3\a_2$ (since both have the same pairings against coroots).

Next, we apply (\ref{bigSatake}) to write
$$\mu_v \ \ = \ \ s\varpi_3 \ + \ it_1\a_1 \ - \ it_3\a_2\,,\qquad s=s_{\a_3},$$
which corresponds to $\diag(s+it_1,s+it_2,s+it_3,0)$ (minus the mean of the entries, to arrange for the trace to vanish).  We find
\begin{equation}\label{31parabolic}
\aligned
  \langle \mu_v,\a_3^\vee\rangle &  \; = \;   s\, + \, it_3\,, \\
  \langle \mu_v,(\a_2+\a_3)^\vee\rangle &  \; = \; s \, - \, it_1 \ - \ it_3  \; = \; s\,+\,it_2\,,\\
  \langle \mu_v,(\a_1+\a_2+\a_3)^\vee\rangle &  \; = \;  s \, + \, it_1\, ,
\endaligned
\end{equation}
which (respectively) are the inner products of $(s+it_1,s+it_2,s+it_3,0)$ with $(0,0,1,-1)$, $(0,1,0,-1)$, and $(1,0,0,-1)$.
Thus (\ref{cuspcoeff5formaximalparabolic}) is $L^*(s+1,\pi)\i$.

 {\extrarowsep=0.5mm
\begin{small}
\begin{table}[H]
\begin{center}
\textbf{FOURIER COEFFICIENTS FOR $\SL(4)$ IN TERMS OF ROOT SYSTEM PARAMETERS}
\vskip 10pt

 \begin{tabular}{|c|C{3.1cm}| C{6.3cm}|}
 \hline
 $\begin{matrix} \text{\rm Partition}\\ \text{Cusp form $\Phi$}\\
 \text{Satake pars. $\a$} \end{matrix}$& $\begin{matrix} $\text{\rm Satake parameters}$ \\ $\text{ of $E(g,\phi_\l)$}$ \end{matrix}$ & Numerical Fourier coefficient for standard generic character (without normalization factors)  \\ \hline
4 = 1+1+1+1
  &
 $(\a_1,\a_2,\a_3,\a_4)$
     &
     $\left. \;\;\;\;\;\;\bigg(\prod\limits_{1\leq j<k\leq 4} \zeta^*(1+\alpha_j-\alpha_k)\bigg)^{-1}
      \phantom{\Bigg(xx\Bigg)_{\int_{P^2}}} \right.
      $  \\ \hline
  $\begin{matrix} 4 = 2+1+1\\
  \phi \; \text{on $\GL(2)$}\\
it\a_1= (it,-it)   \end{matrix}$
  &
  \hskip -2pt
  $ s_2\varpi_2+s_3\varpi_3+it\a_1$
    &
    $ \begin{matrix}
&\hskip-40pt
  \big(\zeta^*(s_3+1)  \, L^*(s_2+1, \phi) \\ & \hskip 50pt \cdot \ L^*(1+s_2+s_3, \phi)\big)^{-1}
\end{matrix}
$
    \\ \hline
 $\begin{matrix} \phantom{.}\\ 4= 2+2\\
 \Phi = (\phi_1, \, \phi_2)\\   \text{on}\\
  \GL(2)\times \GL(2)\\
 it'\a_1, it''\a_3 \\
 \phantom{.}  \end{matrix}$
  &
   $s\varpi_2+it'\a_1+it''\a_3$
   &
   \hskip-7pt
   $
    L^*\big(s+1,\phi_1\times\phi_2\big)^{-1}
   \ $
    \\ \hline
 $\begin{matrix} 4 = 3+1 \\
 \phi \;\text{on $\GL(3)$} \\
it_1\a_1-it_3\a_2 \end{matrix}$
 &
 $s\varpi_3+it_1\a_1-it_3\a_2$
  &
  $ L^*(s+1, \phi)^{-1} $
   \\ \hline
\end{tabular}

\end{center}
\end{table}
\end{small}
}

\begin{remark}
This table is parallel to the earlier one in Section~\ref{sec:sketch}, except that we have omitted the normalizing factor $\widehat\phi$ (which is the source of the special values of Adjoint $L$-functions in the earlier table), as well as the Hecke eigenvalues.
\end{remark}

\subsection*{Example: $E_7$ parabolic for $E_8$}

  Here   $\frak u=\frak u_1\oplus \frak u_2$, where $\frak u_1$ is 56-dimensional and $\frak u_2$ is one-dimensional. Let $\pi$ be a globally spherical and unramified cuspidal automorphic representation of $E_7^{sc}$ (the simply-connected Chevalley group of type $E_7$).
  Then (\ref{cuspcoeff5formaximalparabolic}) equals  $L^*(s+1,\pi,{\bf 56})\i\zeta^*(2s+1)\i$, where $\bf 56$ denotes the 56-dimensional representation of $E_7^{sc}\subset Sp(56)$.

\subsection*{Example: $D_7$ parabolic for $E_8$}

Here $\frak u$ is 78 dimensional, and $\frak u=\frak u_1\oplus\frak u_2$ decomposes as the direct sum of the 64-dimensional spin representation of $Spin(7,7)$, and its 14-dimensional representation ``$Stan$'' (which factors through $SO(7,7))$.  Thus  for $\pi$ a globally spherical and unramified cuspidal automorphic representation  of $Spin(7,7)$, (\ref{cuspcoeff5formaximalparabolic}) equals $L^*(s+1,\pi,Spin)\i L(2s+1,\pi,Stan)\i$.

\vspace{0.5cm} \indent {\it
A\,c\,k\,n\,o\,w\,l\,e\,d\,g\,m\,e\,n\,t\,s.\;} The authors would like to thank Herv\'e Jacquet, Freydoon Shahidi, and Nolan Wallach for many helpful discussions.

\bibliographystyle{amsalpha}

\bibliography{biblio}

\bigskip
\bigskip
\begin{minipage}[t]{10cm}
\begin{flushleft}
\small{
\textsc{Dorian Goldfeld}
\\*Department of Mathematics,
\\*Columbia University,
\\*2990 Broadway
\\* New York, NY 10027, USA
\\*e-mail: dg15@columbia.edu
\\[0.4cm]
\textsc{Stephen D.~Miller}
\\*Department of Mathematics,
\\*Rutgers, The State University of New Jersey,
\\*110 Frelinghuysen Rd
\\*Piscataway, NJ 08854-8019, USA
\\*e-mail: miller@math.rutgers.edu
\\[0.4cm]
\textsc{Michael Woodbury}
\\*Department of Mathematics,
\\*Columbia University,
\\*2990 Broadway
\\*New York, NY 10027, USA
\\*e-mail: woodbury@math.columbia.edu
}
\end{flushleft}
\end{minipage}


\end{document}